\begin{document}
	\input xy
	\xyoption{all}

	%Theorem for the introduction
	\newtheorem{innercustomthm}{{\bf Theorem}}
	\newenvironment{customthm}[1]
	{\renewcommand\theinnercustomthm{#1}\innercustomthm}
	{\endinnercustomthm}
	
	\newtheorem{innercustomcor}{{\bf Corollary}}
	\newenvironment{customcor}[1]
	{\renewcommand\theinnercustomcor{#1}\innercustomcor}
	{\endinnercustomthm}
	
	\newtheorem{innercustomprop}{{\bf Proposition}}
	\newenvironment{customprop}[1]
	{\renewcommand\theinnercustomprop{#1}\innercustomprop}
	{\endinnercustomthm}

	\def \la{\lambda}
	\newcommand{\LaK}{\la_{\texttt{Kr}}}
	\newcommand{\Q}{\mathbb Q}
	\newcommand{\QK}{Q_{\texttt{Kr}}}
	\newcommand{\rad}{\operatorname{rad}\nolimits}
	\newcommand{\cone}{\operatorname{Cone}\nolimits}
	
	\newcommand{\LaJ}{\La_{\texttt J}}
	\newcommand{\tMHL}{\cs\cd\widetilde{\ch}(\LaK^\imath)}
	\newcommand{\tMHg}{{}^\imath\widetilde{\ch}(\QJ)}
	\newcommand{\PL}{\bbP^1_{\bfk}}
	\def\bbP{\mathbb P}
	\def\bfk{\Bbbk}
	\renewcommand{\mod}{\operatorname{mod^{\rm nil}}\nolimits}
	\newcommand{\Aut}{\operatorname{Aut}\nolimits}
	
	\newcommand{\End}{\operatorname{End}\nolimits}
	\newcommand{\Iso}{\operatorname{Iso}\nolimits}
	
	\newcommand{\sqq}{{\bf v}}
	\newcommand{\bq}{{q}}
	
	\newcommand{\tMHLJ}{{}^\imath\widetilde{\ch}(\bfk \QJ)}
	
	\newcommand{\bB}{{\bf B}}
	\newcommand{\iH}{{H}^{\imath}}
	\newcommand{\iP}{P^{\imath}}
	\newcommand{\iB}{{B}^{\imath}}
	\newcommand{\iQ}{{Q}^{\imath}}
	\newcommand{\iV}{{V}^{\imath}}
	\newcommand{\iVh}{\widehat{V}}
	\newcommand{\is}{{s}^{\imath}}
	\newcommand{\haE}{\widehat{E}}
	\newcommand{\haH}{\widehat{H}}
	\newcommand{\haP}{\widehat{P}}
	\newcommand{\haQ}{\widehat{Q}}
	\newcommand{\haV}{\widehat{V}}
	\newcommand{\haT}{\widehat{\Theta}}
	\newcommand{\ka}{\kappa}
	\newcommand{\vv}{v}
	\newcommand{\x}{{\bf x}}
	
	\newcommand{\ov}{\overline}
	\newcommand{\und}{\underline}
	\newcommand{\tk}{\widetilde{k}}
	\newcommand{\tK}{\widetilde{K}}
	\newcommand{\tTT}{\operatorname{\widetilde{\texttt{\rm T}}}\nolimits}
	\newcommand{\iRH}{\operatorname{{}^\imath \widetilde{\ch}}\nolimits}

	\newcommand{\aut}{\operatorname{Aut}\nolimits}
	\newcommand{\res}{\operatorname{res}\nolimits}
	
	\allowdisplaybreaks
	
	\newcommand{\QJ}{Q_{\texttt J}}
	\def\bfk{\Bbbk}
	\def\calc {\mathcal C}
	
	\def \cI{\mathcal I}
	\def \cJ{\mathcal J}
	\def \cR{\mathcal R}
	\def \th{\widetilde{\ch}}
	
	\newcommand{\rep}{\operatorname{rep}\nolimits}
	\newcommand{\Ext}{\operatorname{Ext}\nolimits}
	\newcommand{\Hom}{\operatorname{Hom}\nolimits}
	\renewcommand{\Im}{\operatorname{Im}\nolimits}
	\newcommand{\Ker}{\operatorname{Ker}\nolimits}
	\newcommand{\Coh}{\operatorname{Coh}\nolimits}
	\newcommand{\Id}{\operatorname{Id}\nolimits}
	
	\newcommand{\coker}{\operatorname{Coker}\nolimits}
	\newcommand{\qbinom}[2]{\begin{bmatrix} #1\\#2 \end{bmatrix} }
	
	\newcommand{\gr}{\operatorname{gr}\nolimits}
	\newcommand{\id}{\operatorname{Id}\nolimits}
	\newcommand{\Res}{\operatorname{Res}\nolimits}
	\def \tT{\widetilde{\mathcal T}}
	\def \tTL{\tT(\la^\imath)}
	\def \och{\check{\ch}}
	
	\newcommand{\mbf}{\mathbf}
	\newcommand{\mbb}{\mathbb}
	\newcommand{\mrm}{\mathrm}
	
	\newcommand{\LR}[2]{\left\llbracket \begin{matrix} #1\\#2 \end{matrix} \right\rrbracket}
	\newcommand{\ext}{{ \mathfrak{Ext}}}
	\def\scrP{\mathscr{P}}
	\newcommand{\bk}{{\mathbb K}}
	\newcommand{\cc}{{\mathcal C}}
	\newcommand{\gc}{{\mathcal GC}}
	\newcommand{\dg}{{\rm dg}}
	\newcommand{\ce}{{\mathcal E}}
	\newcommand{\cs}{{\mathcal S}}
	\newcommand{\cP}{{\mathcal P}}
	\newcommand{\cl}{{\mathcal L}}
	\newcommand{\cf}{{\mathcal F}}
	\newcommand{\cx}{{\mathcal X}}
	\newcommand{\cy}{{\mathcal Y}}
	\newcommand{\ct}{{\mathcal T}}
	\newcommand{\cu}{{\mathcal U}}
	\newcommand{\cv}{{\mathcal V}}
	\newcommand{\cn}{{\mathcal N}}
	\newcommand{\mcr}{{\mathcal R}}
	\newcommand{\ch}{{\mathcal H}}
	\newcommand{\ca}{{\mathcal A}}
	\newcommand{\cb}{{\mathcal B}}
	\newcommand{\ci}{{\I}_{\btau}}
	\newcommand{\cj}{{\mathcal J}}
	\newcommand{\cm}{{\mathcal M}}
	\newcommand{\cp}{{\mathcal P}}
	\newcommand{\cg}{{\mathcal G}}
	\newcommand{\cw}{{\mathcal W}}
	\newcommand{\co}{{\mathcal O}}
	\newcommand{\cq}{{Q^{\rm dbl}}}
	\newcommand{\cd}{{\mathcal D}}
	\newcommand{\ck}{\widetilde{\mathcal K}}
	\newcommand{\calr}{{\mathcal R}}
	\newcommand{\dmno}{\ytableausetup{boxsize=3pt}\ydiagram{1,1}}
	%\begin{ytableau} {} \\ {} \end{ytableau}}
	\newcommand{\dmnoB}{\ytableausetup{boxsize=3pt}\ydiagram{2}}
	\newcommand{\mno}{\ytableausetup{boxsize=3pt}\ydiagram{1}}
	\newcommand{\iLa}{\la^{\imath}}
	\newcommand{\La}{\Lambda}
	\newcommand{\ol}{\overline}
	\newcommand{\ul}{\underline}
	\newcommand{\ow}{\widetilde}
	
	\newtheorem{theorem}{Theorem}[section]
	\newtheorem{acknowledgement}[theorem]{Acknowledgement}
	\newtheorem{algorithm}[theorem]{Algorithm}
	\newtheorem{assumption}[theorem]{Assumption}
	\newtheorem{axiom}[theorem]{Axiom}
	\newtheorem{case}[theorem]{Case}
	\newtheorem{claim}[theorem]{Claim}
	\newtheorem{conclusion}[theorem]{Conclusion}
	\newtheorem{condition}[theorem]{Condition}
	\newtheorem{conjecture}[theorem]{Conjecture}
	\newtheorem{construction}[theorem]{Construction}
	\newtheorem{corollary}[theorem]{Corollary}
	\newtheorem{criterion}[theorem]{Criterion}
	\newtheorem{definition}[theorem]{Definition}
	\newtheorem{example}[theorem]{Example}
	\newtheorem{exercise}[theorem]{Exercise}
	\newtheorem{lemma}[theorem]{Lemma}
	\newtheorem{notation}[theorem]{Notation}
	\newtheorem{problem}[theorem]{Problem}
	\newtheorem{proposition}[theorem]{Proposition}
	\newtheorem{solution}[theorem]{Solution}
	\newtheorem{summary}[theorem]{Summary}
	\numberwithin{equation}{section}
	
	\theoremstyle{remark}
	\newtheorem{remark}[theorem]{Remark}
	
	\def \cz{\mathcal Z}
	
	\def \bmu{\boldsymbol{\mu}}
	\def \bnu{\boldsymbol{\nu}}
	\def \bla{\boldsymbol{\la}}
	
	\def \bfK{{\mathbf K}}
	
	\def \bA{{\mathbf A}}
	\def \ba{{\mathbf a}}
	\def \bL{{\mathbf L}}
	\def \bF{{\mathbf F}}
	\def \bS{{\mathbf S}}
	\def \bC{{\mathbf C}}
	\def \bU{{\mathbf U}}
	\def \bc{{\mathbf c}}
	\def \fpi{\mathfrak{P}^\imath}
	\def \Ni{N^\imath}
	\def \fp{\mathfrak{P}}
	\def \fg{\mathfrak{g}}
	\def \fk{\fg^\theta}  %\mathfrak{k}}
	
	\def \fn{\mathfrak{n}}
	\def \fh{\mathfrak{h}}
	\def \fu{\mathfrak{u}}
	\def \fv{\mathfrak{v}}
	\def \fa{\mathfrak{a}}
	\def \fq{\mathfrak{q}}
	\def \Z{{\mathbb Z}}
	\def \F{{\mathbb F}}
	\def \D{{\mathbb D}}
	\def \bB{{\mathbb B}}
	\def \C{{\mathbb C}}
	\def \N{{\mathbb N}}
	\def \Q{{\mathbb Q}}
	\def \G{{\Bbb G}}
	\def \P{{\Bbb P}}
	\def \K{{\mathbf k}}
	\def \bK{{\mathbb K}}
	
	\def \E{{\Bbb E}}
	\def \A{{\Bbb A}}
	\def \L{{\Bbb L}}
	\def \R{{\Bbb R}}
	\def \I{{\Bbb I}}
	\def \BH{{\Bbb H}}
	\def \T{{\Bbb T}}
	\def \de{{\delta}}
	\def \vth{{\theta}}
	
	\def \cN{{\mathcal N}}
	
	\newcommand{\arxiv}[1]{\href{http://arxiv.org/abs/#1}{\tt arXiv:\nolinkurl{#1}}}

	\newcommand{\nc}{\newcommand}
	\newcommand{\browntext}[1]{\textcolor{brown}{#1}}
	\newcommand{\greentext}[1]{\textcolor{green}{#1}}
	\newcommand{\redtext}[1]{\textcolor{red}{#1}}
	\newcommand{\bluetext}[1]{\textcolor{blue}{#1}}
	\newcommand{\brown}[1]{\browntext{ #1}}
	\newcommand{\green}[1]{\greentext{ #1}}
	\newcommand{\red}[1]{\redtext{ #1}}
	\newcommand{\blue}[1]{\bluetext{ #1}}
	
	%todo
	\newcommand{\wtodo}{\todo[inline,color=orange!20, caption={}]}
	\newcommand{\lutodo}{\todo[inline,color=green!20, caption={}]}

	%%%%%
	\title[Double Hall-Littlewood symmetric polynomials]{Double Hall-Littlewood symmetric functions}

	\author[Jiayi Chen]{Jiayi Chen}
	\address{Department of Mathematics, Shantou University, Shantou 515063, P.R. China}
	\email{chenjiayi@stu.edu.cn}

	\author[Ming Lu]{Ming Lu}
	\address{Department of Mathematics, Sichuan University, Chengdu 610064, P.R. China}
	\email{luming@scu.edu.cn}

	\author[Shiquan Ruan]{Shiquan Ruan}
	\address{School of Mathematical Sciences, Xiamen University, Xiamen 361005, P.R. China}
	\email{sqruan@xmu.edu.cn}

	\subjclass[2020]{Primary  18E35, 16G20, 05E05}
	\keywords{Hall and derived Hall algebras, Pieri rules, double Hall-Littlewood functions, Schur Laurent functions}

	\maketitle

\begin{quote}\begin{center}
{\em Dedicated to Professor Bangming Deng on the occasion of his 60th birthday}
%\begin{flushright}{}
%\end{flushright}
\end{center}
\end{quote}
	\begin{abstract}
		We establish a ring isomorphism between the derived Hall algebra of the Jordan quiver and the ring of double symmetric functions (i.e., the ring of symmetric polynomials in two sets of countably many variables, invariant under the respective actions of their symmetric groups) with a parameter $t$. This isomorphism maps the derived Hall basis (the natural basis of the derived Hall algebra) to a class of double Hall-Littlewood (HL) symmetric functions, which are formulated via raising and lowering operators. These double HL functions are parameterized by bipartitions; they reduce to the classical HL functions when one of the partitions is empty, and specialize to Schur Laurent symmetric functions at $t = 0$. We also derive the Pieri rules for these double HL functions. Additionally, we obtain several natural generating functions for the derived Hall algebra as well as their transition relations, which can be transferred to the ring of double symmetric functions via the established ring isomorphism.
	\end{abstract}

	\setcounter{tocdepth}{1}
	\tableofcontents
	
	%%%%%%%
	%%%%%%%
	\section{Introduction}

	\subsection{Hall algebras and Hall-Littlewood symmetric functions}

	The classical Hall algebras, known as the Hall algebra of the Jordan quiver, can be dated back to the early 20th century with Steinitz's research on commutative $p$-groups \cite{St01}; see also \cite{Ha57}. This algebra is isomorphic to the ring of symmetric polynomials in infinitely many variables (i.e., the ring of symmetric functions), and its Hall basis corresponds to Hall-Littlewood (HL) symmetric functions (cf. \cite{Mac95}). 
	
	Ringel \cite{Rin90} developed Hall algebras for any abelian categories, thereby providing a categorical realization of the positive parts of quantum groups of Dynkin type; see \cite{Gr95}  for the  extension to Kac-Moody type. 
	In order to realize the whole quantum groups via Hall algebras, several variants of Hall algebras are introduced for triangulated categories; see e.g. \cite{Ka97,X97,PX97,T06,XX08,XC13}. In particular, derived Hall algebras were formulated in \cite{T06,XX08} for any triangulated category satisfying the (left) homological-finiteness condition. The derived Hall algebra for odd-periodic triangulated categories was also given by Xu-Chen \cite{XC13}. 
	
	Recently, the authors \cite{CLR25} established derived Hall algebras for root categories; compare with \cite{Zh25}. For a finitary hereditary abelian category $\ca$, the derived Hall algebra of its root category was constructed by counting the triangles and using the octahedral axiom, which was proved to be isomorphic to the Drinfeld double of the Hall algebra of $\ca$. %We observed in 
    %\cite{CLR25} that the derived Hall algebra of the Jordan quiver can provide a categorical realization of the ring of double symmetric functions. %, and we shall study this realization in depth, and introduce the double HL symmetric functions.
	
	%
	%
	\subsection{Derived Hall algebra of the Jordan quiver}
	
	%\red{In this paper, we aim to establish a theory of double symmetric functions via the derived Hall algebra of the Jordan quiver, as an extension of the theory of the classical Hall algebra and the ring of symmetric functions. On the one hand, the ring of double symmetric functions $\Q[x_1^{+},x_2^{+},\cdots,x_1^{-},x_2^{-},\cdots]^{\mathfrak{S}_\infty\times \mathfrak{S}_\infty}$ can be embedded into the ring of diagonal symmetric functions $\Q[x_1^{+},x_2^{+},\cdots,x_1^{-},x_2^{-},\cdots]^{\mathfrak{S}_\infty}$, i.e., the ring of symmetric polynomials in two sets of countably many variables under the simultaneous symmetric group action. On the other hand, the category of nilpotent representations of the Jordan quiver can be viewed as fundamental building blocks of the category of coherent sheaves on an elliptic curve. Both of diagonal symmetric functions and coherent sheaves on an elliptic curve have been shown to be closely related to the double affine Hecke algebra \cite{Ch05, SchV11}; thus, the study of the former will significantly advance the research on the double affine Hecke algebra. }
	
	Let $\QJ$ be the Jordan quiver, that is, the quiver with a single vertex $1$ and a single loop $\alpha:1\rightarrow 1$. Let $\ca={\mathrm {rep}}^\mathrm{nil}_\bfk(\QJ)$ be the category of finite-dimensional nilpotent  representations over the finite field $\bfk=\F_q$. The isomorphism classes of $\ca$ can be parametrized by all partitions. Denote by $S^{(\lambda)}$ the module for any partition $\lambda$. Let $\th(\bfk\QJ)$ be the Hall algebra of $\ca$, and $\Lambda_t=\Q(t)[x_1,x_2,\dots]^{\mathfrak{S}_\infty}$ be  the ring of symmetric polynomials. Then we have a $\Q(q^{-1})$-algebra isomorphism $\th(\bfk\QJ)\rightarrow \Lambda_{q^{-1}}$, sending $[S^{(\lambda)}]$ to the HL symmetric function $V_\lambda$ for any partition $\lambda$; see \cite{Mac95}. Here $\Lambda_{q^{-1}}$ is the specialization of $\Lambda_t$ at $t=q^{-1}$.  
	
	Let $\mcr$ be the root category of $\ca$, and $[1]$ the shift functor; see \cite{PX97}. Then the isomorphism classes of $\mcr$ can be parametrized by bipartitions $(\lambda,\mu)$, and the corresponding objects are $S^{(\lambda)}\oplus S^{(\mu)}[1]$. Let $\cd\widetilde{\ch}(\bfk\QJ)$ be the derived Hall algebra of $\mcr$; see \cite{CLR25}. Let $\cd\Lambda_t= \Q(t) [x_1^+, x_2^+, \ldots,x_1^-,x_2^-,\ldots]^{\mathfrak{S}_\infty\times \mathfrak{S}_\infty}$ be the ring of double symmetric polynomials. %Denote by $e_r^+$ (resp. $e_r^-$) the elementary symmetric functions of $x_1^+,x_2^+,\ldots$ (resp. $x_1^-,x_2^-,\ldots$). 
	Then there exists an algebra isomorphism $\cd\widetilde{\ch}(\bfk\QJ)\rightarrow \cd\Lambda_{q^{-1}}$ by specializing $\cd\Lambda_{t}$ at $ t=q^{-1}$%, sending $[S^{(1^r)}]$ to $e_r^+$, and $[S^{(1^r)}[1]]$ to $e_r^-$
    ; see \cite[Corollary 5.5]{CLR25}.

	The ring $\cd\Lambda_t$ of double symmetric functions has been extensively investigated in the literature; see e.g. \cite{BR87,Ch05,BLM15}. %In \cite{BR87}, supersymmetric Schur polynomials  are constructed in $\cd\Lambda_t$, which
	%are closely related to characters of certain representations of the Lie superalgebra $\mathfrak{gl}(m/n)$
	%or $\mathfrak{sl}(m/n)$; 
	The $\cd\Lambda_t$ can be embedded into the ring of diagonal symmetric functions $\Q(t) [x_1^+, x_2^+, \ldots,x_1^-,x_2^-,\ldots]^{\mathfrak{S}_\infty}$, i.e., the ring of symmetric polynomials in two sets of countably many variables under the simultaneous symmetric group action. The category of nilpotent representations of the Jordan quiver can be viewed as fundamental building blocks of the category of coherent sheaves on an elliptic curve. Both diagonal symmetric functions and coherent sheaves on an elliptic curve have been shown to be closely related to the double affine Hecke algebra \cite{Ch05, SchV11}.

    \subsection{The goal}
	The goal of this paper is to introduce double HL functions $V_{\lambda,\mu}$, which correspond to the natural basis $[S^{(\lambda)}\oplus S^{(\mu)}[1]]$ of $\cd\widetilde{\ch}(\QJ)$.
	Inspired by \cite{Mac95,Tam11,LRW25}, the double HL functions are defined by 
	using raising and lowering operators, called the Giambelli type formulas. These double HL functions are parameterized by bipartitions; they reduce to the classical HL functions when one of the partitions is empty. We also derive the Pieri rules for these double HL functions. %, define the generating functions for some  generating sets of $\cd\widetilde{\ch}(\bfk \QJ)$. 
	
	It is remarkable that another class of double HL functions has been constructed as a limit of double Macdonald symmetric functions in \cite{BLM15}, which is completely different from ours.

	\subsection{Double Hall-Littlewood symmetric functions}
	The raising operator formalism played a fundamental role in the Schubert calculus of classical type developed by Buch, Kresch and Tamvakis \cite{BKT08}. Tamvakis further developed the raising operator approach \cite{Tam11}; in particular, starting from a raising operator definition of Giambelli polynomials (and then HL functions), he found a direct proof of the horizontal Pieri rule \cite[III, (5.7$'$)]{Mac95}. Such a Pieri rule was earlier obtained by Morris \cite{Mor64}. The lowering operator appeared in \cite{Lec06}. The $\imath$HL symmetric functions, derived from the $\imath$Hall algebra of the Jordan quiver, are introduced in \cite{LRW25} by the synergistic actions of raising and lowering operators, and specialized to universal characters of type C.
	
	With the help of Tamvakis' approach, we are able to formulate an analogue for the ring $\Q(t)[v_1^\pm,v_2^\pm,\ldots]$, which can be identified with the ring $\cd\Lambda_t$ of double symmetric functions (by interpreting $v_r^\pm$ as the classic symmetric functions $q_r$ or $h_r$ in positive/negative part). Unlike the original definition of these operators, which applied only to a single partition \cite{Lec06, LRW25}, the raising and lowering operators are revised to act on bipartitions $(\lambda,\mu)$, thereby yielding the double HL symmetric functions $V_{\lambda,\mu}$.

	Similar to Tamvakis’ work, we prove that all double HL symmetric functions form a basis for the ring of double symmetric functions and satisfy two mirror identities. Consequently, we derive two Pieri rules for $\cd\Lambda_t$, corresponding to the formulas governing the multiplication of the generators of the positive/negative parts with the double HL symmetric functions. 
	Moreover, when specializing at $t=0$, double HL symmetric functions coincide with the Schur-Laurent functions given in \cite{SV15}.
	
	In the derived Hall algebra $\cd\widetilde{\ch}(\bfk \QJ)$, we also have Pieri rules, which arise as special cases of the derived Hall multiplication formula. Then the algebra isomorphism $\cd\widetilde{\ch}(\bfk\QJ)\rightarrow \cd\Lambda_{q^{-1}}$ sends the natural basis $[S^{(\lambda)}\oplus S^{(\mu)}[1]]$ to the double HL functions $V_{\lambda,\mu}$. We also introduce generating functions of several natural generating
	sets in $\cd\widetilde{\ch}(\bfk\QJ)$. 
	Based on this isomorphism, we extend the classic generating functions (such as elementary/complete symmetric functions) to $\cd\Lambda_t$, and consider their transition relations. 
	
In the following, let us describe the main results more precisely.

%	Benefiting from the isomorphism between the derived Hall algebra of the Jordan quiver and the ring of double symmetric functions, we can extend the notion of generating functions and the related theory from the ring of symmetric functions to our framework. First, we will construct generating functions of several natural generating sets, including {\red{the elementary/complete/HL/power-sum double symmetric functions,}} the elementary double symmetric functions, complete double symmetric functions, double HL symmetric functions, and power sum double symmetric functions. Subsequently, by utilizing the calculations in the derived Hall algebra and certain combinatorial identities, we will establish various relations among these generating functions. Furthermore, we discover that the generating functions in the context of double symmetric functions can be characterized by the corresponding generating functions of the symmetric functions in the positive and negative parts.

	%
	%
	\subsection{Main results}
	
	Consider the ring {$\mathcal{D}\Lambda_t=\Q(t)[v_1^\pm, v_2^\pm, \ldots]$}  of double symmetric functions. For two partitions $\rho,\nu$, we construct the double HL symmetric function $V_{\rho,\nu}$ via raising and lowering operators $R_{ij}^\pm$ and $L_{ij}$ acting on $v_{\rho,\nu}=v^+_{\rho}v^-_{\nu}$, where $v_{\rho}^+=v^+_{\rho_1}v^+_{\rho_2}\cdots$ and $v^-_{\rho}= v^-_{\nu_1}v^-_{\nu_2}\cdots$. 
	
	For partitions $\mu, \nu$, we denote $\mu \leq \nu$ if $\mu_i \le \nu_i$ for all $i$, and denote $\mu \stackrel{a}{\rightarrow} \nu$ if $\mu \leq \nu$ and $\nu-\mu$ is a horizontal $a$-strip. The definitions for the polynomials $\varphi_{\nu/\mu}(t), \psi_{\nu/\mu}(t)$ can be found in \eqref{eq:phipsi}, following \cite[III, (5.8)-(5.8$'$)]{Mac95}. Then the horizontal Pieri rules are formulated for double HL symmetric functions.
	
	\begin{customthm}{{\bf A}}
		[Horizontal Pieri rules, Theorem \ref{thm:Pieri-Ho}] %: down-up]
		\label{ThmA}
		For $r\ge 1$ and any partitions $\rho,\nu$, we have
		%\begin{align}
		%\iV_\mu  \cdot \vv_r
		%&= \sum_{\la \in \mathcal P} \sum_{\nu \in \cj^\la_{\mu,r}} \vth^{|\mu/\nu|}\, \varphi_{\mu/\nu}(t) \psi_{\la/\nu}(t) \;  \iV_\la.
		% \label{Pieri:hSF3}
		%\end{align}
		\begin{align*}
			V_{\rho,\nu}  \cdot \vv^+_r
			&=\sum_{a+b=r} \sum_{\mu \stackrel{a}{\rightarrow} \nu ,\rho\stackrel{b}{\rightarrow} \la} \, \varphi_{\nu/\mu}(t) \psi_{\la/\rho}(t) \;  V_{\la,\mu},
	\\
			V_{\rho,\nu}  \cdot \vv^-_r
			&=\sum_{a+b=r} \sum_{\nu \stackrel{a}{\rightarrow} \mu ,\la\stackrel{b}{\rightarrow} \rho} \, {\psi_{\mu/\nu}(t) \varphi_{\rho/\la}(t)} \;  V_{\la,\mu}.
		\end{align*}
	\end{customthm}
	
	For partitions $\mu, \nu$, we use $\nu \stackrel{b}{\downarrow} \mu$ to denote that $\mu \leq \nu$ and $\nu-\mu$ is a vertical $b$-strip. The definitions for $\varphi_r(t)$, $b_{\mu}(t)$, and $f_{\mu,(1^{b})}^\nu (t)$ can be found in \eqref{eq:phim}--\eqref{def:bla} and \eqref{eq:fmu}, following \cite[III, (2.12), (3.2)]{Mac95}. The vertical Pieri rules are also formulated.
	
	\begin{customthm}{{\bf B}}
		[Vertical Pieri rules, cf. Theorem  \ref{prop:haPieri-v}]
		For $r\geq1$ and any partitions $\rho,\nu$, we have{
			\begin{align*}
				V_{\rho,\nu}\cdot V^+_{(1^r)}
				=&\sum_{a+b=r} \sum_{\la\stackrel{a}{\downarrow} \rho} \sum_{\nu \stackrel{b}{\downarrow}\mu}
				\ \frac{b_{\rho}(t)}{b_\la(t)}\varphi_r(t)  f_{\rho,(1^{a})}^\la(t) f_{\mu,(1^b)}^\nu(t) \cdot V_{\la,\mu},
		\\
				V_{\rho,\nu}\cdot V^-_{(1^r)}
				=&\sum_{a+b=r} \sum_{\rho\stackrel{a}{\downarrow} \la} \sum_{\mu \stackrel{b}{\downarrow}\nu}
				\  \frac{b_{\nu}(t)}{b_\mu(t)}\varphi_r(t)  f_{\la,(1^{a})}^\rho(t) f_{\nu,(1^b)}^\mu(t) \cdot V_{\la,\mu}.
		\end{align*}}
	\end{customthm}
	
	Denote by $\cd\th(\bfk \QJ)$ the {derived Hall algebra} of the root category of the Jordan quiver and $\cd\th( \QJ)$ its generic version; see \S\ref{subsec:g-derived}. The algebra $\cd\th(\QJ)$ is generated by infinitely many generators
	$ \fu_{{(r),\emptyset}}, \fu_{\emptyset,{(r)}}$ for $r\ge 1$, and admits  a $\Q(t)$-basis $\{\widehat{V}_{\la,\mu}\mid \lambda,\mu\text{ are partitions}\}$. Here $\widehat{V}_{\lambda,\mu}$ is a slightly rescaling of the Hall basis element $\fu_{\lambda,\mu}$; see \eqref{eq:haV}. 
	
	\begin{customthm}{{\bf C}} [Theorem \ref{thm:iso}, Corollary \ref{cor:isomorphism}]\label{ThmC}
		There exists a $\Q(t)$-algebra isomorphism $\Phi_t: \cd\th(\QJ) \longrightarrow \mathcal{D}\La_t$ such that
		\begin{align*}
			\Phi_t (\fu_{{(r),\emptyset}}) =t^{-r} v^+_r ,\qquad\Phi_t (\fu_{\emptyset,{(r)}}) = v^-_r \quad \text{  for  }r\ge 1.
		\end{align*}
		Moreover, for any partitions $\la,\mu$, we have{
			\begin{align*}
				\Phi_t(\widehat{V}_{\la,\mu})= V_{\la,\mu}.
		\end{align*}}
	\end{customthm}

The (horizontal) Pieri rule in Theorem \ref{ThmA} determines the double HL symmetric functions $V_{\lambda,\mu}$. In order to prove Theorem \ref{ThmC}, we only need to establish the Pieri rule for $\widehat{V}_{\lambda,\mu}$ in $\cd\th(\QJ)$, which is given in Proposition \ref{prop:PieriHall-Ho}.
	
	% \begin{customthm}{{\bf C}}
	% There exists a $\Q$-algebra isomorphism $\Phi_{(\bq)}: \cd\th(\bfk \QJ) \longrightarrow \mathcal{D}\La$ such that
	% \begin{align*}
	% \Phi_{(\bq)} (u_{[S^{(r)}]}) =q^r v^+_r ,\qquad\Phi_{(\bq)} (u_{[S^{(r)}[1]]}) = v^-_r \quad \text{  for  }r\ge 1.
	% \end{align*}
	% Moreover, for any partitions $\la,\mu$, we have{
	% \begin{align*}
	% \Phi_{(\bq)}(u_{[S^{(\la)}\oplus S^{(\mu)}[1]]})=\bq^{|\la| +n(\la)+n(\mu)} V_{\la,\mu}.
	% \end{align*}}
	% \end{customthm}
	
	%To formulate some generating functions below, we need to enlarge the field $\Q(t)$ to $\Q(\sqq)$, where we takes $\sqq =\sqrt{q}$ and $q$ can also be interpreted as a formal variable $t^{-1}$. 
	To formulate some generating functions below, we need to consider the algebras over the field $\Q(\sqq)$, where $\sqq =\sqrt{q}$. Let $y,z$ be two indeterminates. Recall $\varphi_m(t)$ and $\exp_q \big(\dfrac{x}{1-q} \big)$ from \eqref{eq:phim} and \eqref{eq:Euler}. In \eqref{gen1}--\eqref{def:p}, we introduce the generating functions $\widetilde{H}(y,z), \widetilde{E}(y,z)$ and $\widetilde{\Theta}(y,z)$ of several natural generating sets in the derived Hall algebra $\cd\th(\bfk\QJ)$ (compare with \cite{BKa01, Sch06, LRW23, LRW25}). Following \cite{Mac95,BKa01,Sch06}, we denote by $H_i(y),E_i(y),\Theta_{i}(y)$ for $i=1,2$ the classic generating functions in the positive/negative parts of $\cd\th(\bfk\QJ)$; see \eqref{eq:classic-SFTheta}--\eqref{eq:classic-SFH}.

%	Via the isomorphism in Theorem \ref{ThmC}, %of $\cd\th(\QJ)$ and $\mathcal{D}\La_t$, 
%	we can establish the transition relations between the generating functions.
	
	\begin{customthm}{{\bf D}}
		[Propositions \ref{e-h}, \ref{theta-e-h}, Theorem \ref{prop:transition}]\label{ThmD}
		The following identities hold in the derived Hall algebra of the Jordan quiver $\cd\widetilde{\ch}(\bfk \QJ) \otimes_{\Q} \Q(\sqq)$:
		\begin{align*}
			\widetilde{H}(y,z)&\widetilde{E}(-y,-z)=\big(\mathrm{exp}_q(\frac{yz}{1-q})\big)^2,\\
			\widetilde{\Theta}(y,z)&=(1-qyz)^2\frac{\widetilde{E}(-y,-z)}{\widetilde{E}(-qy,-qz)}={(1-yz)^{-2}}\frac{\widetilde{H}(qy,qz)}{\widetilde{H}(y,z)},\\
				\widetilde{\Theta}(y,z)&=\Theta_1(y)\Theta_2(z)\exp\big( \sum_{k\ge1}\frac{1-q^k}{k}(yz)^k\big),
		\\
		\widetilde{E}(y,z)&=E_1(y)E_2(z)\exp_q\big( \frac{yz}{1-q}\big),
		\\
		\widetilde{H}(y,z)&=H_1(y)H_2(z)\exp_q\big( \frac{yz}{1-q}\big).
		\end{align*}
	\end{customthm}

	Thanks to the isomorphism in Theorem \ref{ThmC}, we can transfer the generating functions and their transition relations established in Theorem \ref{ThmD} to the ring $\cd\Lambda_t$ of double symmetric functions.
	
	%
	%
	% \subsection{The ring isomorphism}

	% %
	% %
	% \subsection{Transitions among several generating sets}

	% %
	% %
	\subsection{Organization}
	In Section~\ref{sec:Pieri}, we establish the horizontal Pieri rule for the Giambelli type polynomials $V_{\la,\mu}$, generalizing the approach of Tamvakis. We then make suitable identifications of $v_r$ to formulate the double HL symmetric functions. In Section~\ref{sec:Jordan}, we review derived Hall algebra $\cd\widetilde{\ch}(\bfk \QJ)$ of the Jordan quiver and then establish horizontal/vertical Pieri rules in $\cd\widetilde{\ch}(\bfk \QJ)$.
	In Section~\ref{sec:generating}, we study several distinguished generating sets for the derived Hall algebra $\cd\widetilde{\ch}(\bfk \QJ)$ in the form of generating functions and establish their transition relations. % from one generating set  to another.

	\vspace{2mm}
	
	\noindent{\bf Acknowledgment.}
	JC is partially supported by the National Natural Science Foundation of China Tianyuan Fund for Mathematics (No. 12526562).
    ML is partially supported by the National Natural Science Foundation of China (No. 12171333).
	SR is partially supported by
	Fundamental Research Funds for Central Universities of China (No. 20720250059), Fujian Provincial Natural Science Foundation of China (No. 2024J010006) and
	the National Natural Science Foundation of China (Nos. 12271448 and 12471035).

	%%%%%%%
	%%%%%%%
	\section{Double Hall-Littlewood symmetric functions}
	\label{sec:Pieri}
	
	In this section,  we introduce the Giambelli type polynomials and establish their Pieri rules for double HL symmetric functions, and compare the latter with classical HL functions.
	
	Denote by 
	$\N,\Z,\Q$ the sets of non-negative integers, integers, and rational numbers, respectively.

	\subsection{A Giambelli type formula}
	Let $\vv_r^\pm$ be commuting variables of degree $r$, for $r \in \Z_{\ge 1}$. Let us set $\vv_0^\pm=1$ and $\vv_a^\pm=0$, for $a<0$. We define
	\[
	\vv_\alpha^\pm =\vv_{\alpha_1}^\pm\vv_{\alpha_2}^\pm \cdots\quad \text{  and }\quad \vv_{\alpha,\beta}=\vv_\alpha^+\vv_\beta^-
	\]
	multiplicatively for any integer sequences $\alpha =(\alpha_1, \alpha_2, \ldots)$ and $\beta =(\beta_1, \beta_2, \ldots)$ with finitely many nonzero entries.
	Given two integer sequences $\alpha, \beta$, we denote $\alpha \ge \beta$ (or $\beta \le \alpha$) if $\alpha_i \ge \beta_i$, for all $i\ge 1$; in particular, $\alpha \ge 0$ denotes that $\alpha$ is a composition. For a composition $\alpha$, we denote
	\begin{align}
		\label{eq:number}
		\begin{split}
			\ell(\alpha) &= \text{number of nonzero parts in } \alpha,
			\\
			|\alpha| &=\alpha_1 +\alpha_2+\cdots.
		\end{split}
	\end{align}
	
	Let $i,j\ge1$, we define the lowering operator $L_{ij}$ (cf. \cite{Lec06,LRW25}) acting on the pair of integer sequences $(\alpha,\beta)$ by
	\[
	L_{ij} (\alpha,\beta) =((\ldots, \alpha_{i-1}, \alpha_i-1,, \alpha_{i+1}, \ldots),(\ldots, \beta_{j-1}, \beta_j-1,, \beta_{j+1}, \ldots)).
	\]
	And for $1\le i<j$ we have the left raising operator $R^+_{ij}$ acting on $(\alpha,\beta)$ by
	\[
	R_{ij}^+ (\alpha,\beta) =((\ldots, \alpha_i+1, \ldots, \alpha_j-1, \ldots),\beta).
	\]
	Similarly, the right raising operator $R^-_{ij}$ acts on $(\alpha,\beta)$ by
	\[
	R_{ij}^- (\alpha,\beta) =(\alpha,(\ldots, \beta_i+1, \ldots, \beta_j-1, \ldots)).
	\]
	While the raising operators were originally introduced by Young (cf. \cite{Mac95}), some of the significant applications have appeared only recently; see \cite{BKT08, Tam11}. %The \colorbox{yellow}{lowering operators} above also appeared in \cite{Lec06}.
	The actions of raising and lowering operators on $\vv_{\alpha,\beta}$ are given by %letting
	\[
	L_{ij} \vv_{\alpha,\beta} =\vv_{L_{ij}(\alpha,\beta)},\qquad R^\pm_{ij} \vv_{\alpha,\beta} =\vv_{R_{ij}^\pm(\alpha,\beta)}.
	\]
	Note that all raising and lowering operators commute with each other. %When the operators act on $\vv_{\alpha,\beta}$, we fix the order of $\alpha$ in $\alpha_1,\alpha_2,\ldots$ and $\beta$ in $\beta_1,\beta_2,\ldots$.
	
	Let $t$ be an indeterminate. Denote by $\cp_n$ the set of partitions $\la$ of $n$ with dominance order $\unrhd$, and we sometimes write $\la \vdash n$. Set $\cp =\cup_{n\ge 0}\cp_n$. 
	The dominance order $\unrhd$ also makes sense on the set of all compositions of $n$. 
	
	The ring
	\[
	\mathcal{D}\La_{t} = \Q(t)[v_1^\pm, v_2^\pm, \ldots] 
	\]
	admits a basis $\{v_{\la,\mu} \mid \la,\mu \in \cp \}$.
	
	Justification of applying raising (and lowering) operators properly can be found in \cite[\S2.2]{Tam12} and \cite[(4.1) and below]{BMPS19}. Let
	\begin{align}
		\label{eq:F}
		F(u)=\frac{1-u}{1-tu}=1+ (1-t^{-1}) \sum_{r\geq1} t^r u^r.
	\end{align}Denote
	\begin{align}  \label{eq:L}
		\L_{ij} &:=   \frac{1 - L_{ij}}{1 -  t L_{ij}}
		=1 + (1-t^{-1}) \sum_{s\ge 1} t^{s} L_{ij}^s ,
		\\
		\qquad
		\R_{ij}^\pm &:=  \frac{1 -R^\pm_{ij}}{1 -tR^\pm_{ij}}
		= 1 + (1-t^{-1}) \sum_{p\ge 1} t^{p} (R_{ij}^\pm)^p,
		\\
		\L &:= \prod_{i,j\ge1} \L_{ij},\qquad
		\R^\pm := \prod_{1\le i<j}\R_{ij}^\pm.
	\end{align}
	
	\begin{definition}
		\label{def:V}
		For any integer vectors $\alpha$ and $\beta$, we define $V_{\alpha,\beta} \in \mathcal{D}\La_{t}$ by:
		\begin{align}  \label{eq:LRq}
			V_{\alpha,\beta} &= \L\cdot\R^+\cdot\R^-(\vv_{\alpha,\beta}).
		\end{align}
		In particular, we denote $V^+_{\alpha}=V_{\alpha,\emptyset}$ and $V^-_\beta=V_{\emptyset,\beta}$, where $\emptyset$ denotes the empty set.
		Note that $V^+_{r} =\vv_{r,\emptyset} =\vv^+_r$ and $V^-_{r} =\vv_{\emptyset,r} =\vv^-_r$. Furthermore, $V^+_{\alpha} =\R^+ (\vv^+_\alpha)$ and $V^-_{\alpha} =\R^- (\vv^-_\alpha)$.
	\end{definition}

	Denote
	\begin{align}   \label{eq:D}
		D_{\ell,\mathfrak{m}} : =
		\prod_{\substack{1\le i\le\ell\\1\le j\le \mathfrak{m}}} \L_{ij} \cdot \prod_{1\le i<j\le\ell} \R_{ij}^+\cdot\prod_{1\le i<j\le\mathfrak{m}} \R_{ij}^- .
	\end{align}
	
	Below we follow \cite[\S1-2]{Tam11} to manipulate the operators $D_{\ell,\mathfrak{m}}$. We have
	\begin{align*}
		D_{\ell,\mathfrak{m}}
		&= D_{\ell-1,\mathfrak{m}} \cdot\prod_{\substack{1\le j\le \mathfrak{m}}} \L_{\ell j} \cdot \prod_{1\le i<\ell} \R_{i\ell}^+
		\\
		&= D_{\ell-1,\mathfrak{m}} \cdot \prod_{1\le j\le \mathfrak{m}} \Big(1 + (1-t^{-1}) \sum_{s\ge 1}  t^s L_{\ell j}^s \Big)\cdot \prod_{1\le i< \ell}
		\Big(1 + (1-t^{-1}) \sum_{p\ge 1} t^{p} (R^+_{i\ell})^p \Big).
	\end{align*}
	Applying $D_{\ell,\mathfrak{m}}$ to $\vv_{(\alpha,r),\beta} $, where $(\alpha,r)$ and $\beta$ are integer vectors with $\ell(\alpha,r)=\ell$ and $\ell(\beta)=\mathfrak{m}$, we obtain
	\begin{align}
		V_{(\alpha,r),\beta} &=D_{\ell,\mathfrak{m}} \vv_{(\alpha,r),\beta}=
		D_{\ell-1,\mathfrak{m}} \cdot\prod_{\substack{1\le j\le \mathfrak{m}}} \L_{\ell j} \cdot \prod_{1\le i<\ell} \R_{i\ell}^+(\vv_{(\alpha,r),\beta})
		\notag \\
		&= D_{\ell-1,\mathfrak{m}}  \cdot \prod_{1\le i<\ell} \R_{i\ell}^+\cdot\prod_{\substack{1\le j\le \mathfrak{m}}} \Big(1 + (1-t^{-1}) \sum_{s\ge 1} t^{s} L_{\ell j}^s \Big)(\vv_{(\alpha,r),\beta})
		\notag\\&= D_{\ell-1,\mathfrak{m}}  \cdot \prod_{1\le i<\ell} \R_{i\ell}^+ (\sum_\gamma t^{|\gamma|}(1-t^{-1})^{\ell(\gamma)} \vv_{(\alpha,r-|\gamma|),\beta-\gamma})
		\notag\\&=\sum_\gamma t^{|\gamma|}(1-t^{-1})^{\ell(\gamma)}\cdot D_{\ell-1,\mathfrak{m}}  \cdot \prod_{1\le i<\ell}\Big(1 + (1-t^{-1}) \sum_{p\ge 1}  t^p (R^+_{i\ell})^p \Big) ( \vv_{(\alpha,r-|\gamma|),\beta-\gamma})
		\notag\\&=\sum_\gamma t^{|\gamma|}(1-t^{-1})^{\ell(\gamma)}\cdot D_{\ell-1,\mathfrak{m}}  (\sum_\eta t^{|\eta|}(1-t^{-1})^{\ell(\eta)}  \vv_{(\alpha+\eta,r-|\gamma|-|\eta|),\beta-\gamma})
		\notag\\&=\sum_{\gamma,\eta} t^{|\gamma|+|\eta|}(1-t^{-1})^{\ell(\gamma)+\ell(\eta)}\cdot D_{\ell-1,\mathfrak{m}}  (\vv_{\alpha+\eta,\beta-\gamma}\vv_{r-|\gamma|-|\eta|}^+)
		\notag\\&=\sum_{\gamma,\eta} t^{|\gamma|+|\eta|}(1-t^{-1})^{\ell(\gamma)+\ell(\eta)}\cdot   V_{\alpha+\eta,\beta-\gamma}\vv_{r-|\gamma|-|\eta|}^+
		,
		\label{eq:recur1}
	\end{align}
	summed over all compositions $ \eta \in \N^{\ell-1}$ and $ \gamma \in \N^{\mathfrak{m}}$.

	Similarly, we also have \begin{align}
		V_{\alpha,(\beta,r)}	=\sum_{\gamma,\eta} t^{|\gamma|+|\eta|}(1-t^{-1})^{\ell(\gamma)+\ell(\eta)}\cdot   V_{\alpha-\gamma,\beta+\eta}\vv_{r-|\gamma|-|\eta|}^-
		\label{eq:recur2}
	\end{align}
	summed over all compositions $ \gamma \in \N^{\ell-1}$ and $ \eta \in \N^{\mathfrak{m}}$. The formulas \eqref{eq:recur1} and \eqref{eq:recur2} give a recursive formula for computing $V_{\alpha,\beta}$. %Repeatedly applying \eqref{eq:recur1}, we see that
	%\[
	%\iV_{\alpha} \in v_{\alpha} + \sum_{\alpha' \rhd \alpha}\Q(t)v_{\alpha'}+ \sum_{|\alpha''| <|\alpha|}\Q(t)[\vth]v_{\alpha''}.
	%\]
	
	%Recall that $\{v_{\la,\mu} \mid \la,\mu \in \cp \}$ forms a $\Q(t)$-basis for $\mathcal{D}\La_{t}$.  the following is immediate.
	
	\begin{proposition}
		$\{V_{\la,\mu} \mid \la,\mu \in \cp \}$ forms a $\Q(t)$-basis for $\mathcal{D}\La_{t}$.
	\end{proposition}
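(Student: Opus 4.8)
The plan is to compare the family $\{V_{\la,\mu}\}$ with the monomial basis $\{v_{\la,\mu}\mid \la,\mu\in\cp\}$ of $\mathcal{D}\La_t$ noted above, and to show that the transition between the two is triangular with respect to a natural filtration, hence invertible. This is the standard mechanism by which Giambelli/Hall--Littlewood type families are shown to be bases, adapted here to bipartitions.

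Give $\mathcal{D}\La_t=\Q(t)[v_1^\pm,v_2^\pm,\dots]$ the grading in which $v_r^\pm$ has degree $r$; then each homogeneous component $(\mathcal{D}\La_t)_d$ is finite-dimensional over $\Q(t)$ with basis $\{v_{\nu,\pi}\mid |\nu|+|\pi|=d\}$, and there is a finer bigrading with $v_r^+$ in bidegree $(r,0)$ and $v_r^-$ in bidegree $(0,r)$, so that $v_{\la,\mu}$ sits in bidegree $(|\la|,|\mu|)$. Let $F_d=\bigoplus_{e\le d}(\mathcal{D}\La_t)_e$ be the associated increasing filtration. Since each $R^\pm_{ij}$ preserves both components of the bidegree while each $L_{ij}$ lowers each by $1$ (so $L_{ij}^s$ lowers the degree by $2s$), the operators $\R^\pm$ preserve the bidegree and $\L=\mathrm{id}+(\text{degree-lowering terms})$. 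Hence, by Definition~\ref{def:V}, $V_{\la,\mu}\in F_{|\la|+|\mu|}$, and its component of top degree $|\la|+|\mu|$ equals $\R^+\R^-(v_{\la,\mu})=(\R^+v^+_\la)\,(\R^-v^-_\mu)$ (using that $\R^+$, resp. $\R^-$, acts only on the first, resp. second, sequence), which lies in bidegree $(|\la|,|\mu|)$.

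The key triangularity input concerns the raising operators: for a partition $\la$ one has $\R^+(v^+_\la)=v^+_\la+\sum_{\nu}c^\la_\nu v^+_\nu$, the sum over partitions $\nu\vdash|\la|$ with $\nu\rhd\la$, and similarly for $\R^-$. Indeed, expanding $\R^+=\prod_{1\le i<j}\bigl(1+(1-t^{-1})\sum_{p\ge1}t^p(R^+_{ij})^p\bigr)$ and applying it to $v^+_\la$, each resulting term is $v^+_\alpha$ for a composition $\alpha$ obtained from $\la$ by a product of the operators $R^+_{ij}$; terms with a negative entry vanish since $v^\pm_a=0$ for $a<0$, and for the rest the partial sums of $\alpha$ are $\ge$ those of $\la$, so $\alpha\unrhd\la$ and $v^+_\alpha=v^+_{\alpha^+}$ with the decreasing rearrangement $\alpha^+$ satisfying $\alpha^+\unrhd\alpha\unrhd\la$. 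The identity choice gives $v^+_\la$ with coefficient $1$, while any surviving term from a nontrivial product strictly raises some partial sum which no subsequent $R^+_{ij}$ can lower again, so there $\alpha^+\rhd\la$. Combining this with the top-degree description above,
\[
V_{\la,\mu}=v_{\la,\mu}+\sum_{\substack{|\nu|=|\la|,\ |\pi|=|\mu|\\ \nu\unrhd\la,\ \pi\unrhd\mu,\ (\nu,\pi)\ne(\la,\mu)}}d^{\la,\mu}_{\nu,\pi}\,v_{\nu,\pi}\ +\ (\text{an element of }F_{|\la|+|\mu|-1}).
\]

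It then remains to run the triangularity argument: I would prove by induction on $d$ that $\{V_{\la,\mu}\mid |\la|+|\mu|\le d\}$ is a $\Q(t)$-basis of $F_d$, the case $d=0$ being clear since $V_{\emptyset,\emptyset}=1$. For the step, pass to $F_d/F_{d-1}\cong(\mathcal{D}\La_t)_d$: the displayed identity shows the classes of the $V_{\la,\mu}$ with $|\la|+|\mu|=d$ are written in the monomial basis of $(\mathcal{D}\La_t)_d$ by a matrix block diagonal over the bidegrees $(m,n)$ with $m+n=d$, the block at $(m,n)$ being the tensor product of the unitriangular transition matrix of $\R^+$ on $\{v^+_\nu\mid\nu\vdash m\}$ with that of $\R^-$ on $\{v^-_\pi\mid\pi\vdash n\}$ (each unitriangular with respect to any linear extension of dominance), hence invertible. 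Thus the classes of $\{V_{\la,\mu}\mid |\la|+|\mu|=d\}$ form a basis of $F_d/F_{d-1}$; with the induction hypothesis this gives the claim for $d$, and letting $d\to\infty$ finishes the proof since $\mathcal{D}\La_t=\bigcup_d F_d$. The one point requiring genuine care is the raising-operator triangularity in the third paragraph — that rearranging into a partition a composition obtained by a nontrivial product of raising operators yields a partition strictly dominating $\la$, and that the spurious negative-entry terms really drop out; everything else is a formal filtered dimension count.
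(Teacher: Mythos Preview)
Your proof is correct and takes essentially the same approach as the paper: both establish unitriangularity of the transition from $\{V_{\la,\mu}\}$ to the monomial basis $\{v_{\la,\mu}\}$. The paper packages this via a single extended dominance order on bipartitions (declaring smaller size to dominate, then ordinary dominance within a size, then the product order) and observes that each of $L_{ij}$, $R^+_{ij}$, $R^-_{ij}$ strictly increases it; you equivalently separate the mechanism into a degree filtration handling the $L_{ij}$'s followed by product dominance on the associated graded handling the $R^\pm_{ij}$'s, with the rearrangement argument for the raising operators spelled out more explicitly.
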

	
	\begin{proof}
		The dominance order $\unrhd$ can be extended to the set $\cp$ of partitions such that $\lambda\unrhd \mu$ if $|\lambda|<|\mu|$; and  furthermore extended to the set $\cp\times \cp$ of bipartitions such that $(\tilde{\lambda},\tilde{\mu})\unrhd (\lambda, \mu)$ if and only if $\tilde{\lambda}\unrhd \lambda$ and $\tilde{\mu}\unrhd \mu$. Then we have $(\tilde{\la},\tilde{\mu})$ strictly dominates $(\la,\mu)$ (denoted by $(\tilde{\la},\tilde{\mu})\triangleright(\lambda,\mu)$) if one of the following cases happens:
		\begin{itemize}
			\item[(a)] $(\tilde{\la},\tilde{\mu})=L_{ij}(\la,\mu)$  for some $i,j$;
			\item[(b)] $(\tilde{\la},\tilde{\mu})=R^+_{ij}(\la,\mu)$  for some  $i<j$;
			\item[(c)] $(\tilde{\la},\tilde{\mu})=R^-_{ij}(\la,\mu)$ for some $i<j$.
		\end{itemize}
		Therefore, by using \eqref{eq:LRq} we obtain that $$V_{\la,\mu}\in \vv_{\la,\mu}+\sum_{(\tilde{\la},\tilde{\mu})\triangleright(\lambda,\mu)}\Q(t)\vv_{\tilde{\la},\tilde{\mu}}$$ 
		where the sum is over bipartitions $(\tilde{\la},\tilde{\mu})$ which strictly dominate $(\lambda,\mu)$. Recall that $\{v_{\la,\mu} \mid \la,\mu \in \cp \}$ forms a $\Q(t)$-basis for $\mathcal{D}\La_{t}$. It follows immediately that $\{V_{\la,\mu} \mid \la,\mu \in \cp \}$ forms a $\Q(t)$-basis for $\mathcal{D}\La_{t}$.
	\end{proof}
	\subsection{Mirror identities}
	%\subsection{Basic properties of $V_{\la,\mu}$}
	
	\begin{proposition}  \label{prop:HT}
		Suppose that an equation (of finite sums) of the form
		$\sum_{\alpha,\beta} a_{\alpha,\beta} V_{\alpha,\beta} =\sum_{\alpha,\beta} b_{\alpha,\beta} V_{\alpha,\beta}$ holds. Then
		$$\sum_{\alpha,\beta} a_{\alpha,\beta} V_{(\mu, \alpha),\beta} =\sum_{\alpha,\beta} b_{\alpha,\beta} V_{(\mu, \alpha),\beta}\quad\text{ and }\quad\sum_{\alpha,\beta} a_{\alpha,\beta} V_{ \alpha,(\mu,\beta)} =\sum_{\alpha,\beta} b_{\alpha,\beta} V_{ \alpha,(\mu,\beta)}$$ 
		for any integer vector $\mu$.
	\end{proposition}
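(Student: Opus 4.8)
The plan is to construct, for each integer $k$, a $\Q(t)$-linear endomorphism $\Psi_k$ of $\mathcal{D}\La_t$ satisfying $\Psi_k(V_{\alpha,\beta}) = V_{(k,\alpha),\beta}$ for all integer vectors $\alpha,\beta$. Granting this, write $\mu = (\mu_1,\dots,\mu_p)$; iterating the defining property gives $V_{(\mu,\alpha),\beta} = \Psi_{\mu_1}\cdots\Psi_{\mu_p}(V_{\alpha,\beta})$, so applying the linear operator $\Psi_{\mu_1}\cdots\Psi_{\mu_p}$ to the hypothesis $\sum_{\alpha,\beta} a_{\alpha,\beta}V_{\alpha,\beta} = \sum_{\alpha,\beta} b_{\alpha,\beta}V_{\alpha,\beta}$ yields the first asserted identity. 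The second then follows formally from the ring involution $\tau$ of $\mathcal{D}\La_t$ interchanging $\vv^+_r \leftrightarrow \vv^-_r$: one checks $\tau\L\tau = \L$ and $\tau\R^\pm\tau = \R^\mp$ (the $\L_{ij}$ couple the two index sequences symmetrically, and $\R^+,\R^-$ commute), whence $\tau(V_{\alpha,\beta}) = V_{\beta,\alpha}$ by \eqref{eq:LRq}; thus applying $\tau$, then the first identity, then $\tau$ again transports the hypothesis to $\sum_{\alpha,\beta} a_{\alpha,\beta}V_{\alpha,(\mu,\beta)} = \sum_{\alpha,\beta} b_{\alpha,\beta}V_{\alpha,(\mu,\beta)}$.

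To build $\Psi_k$, I would manipulate the truncated operators $D_{\ell,\mathfrak{m}}$ of \eqref{eq:D} in the spirit of the computation preceding \eqref{eq:recur1}. Since all raising and lowering operators commute, separating off the factors of $D_{\ell+1,\mathfrak{m}}$ that involve the first $\alpha$-slot gives
\[
D_{\ell+1,\mathfrak{m}} = \Big(\prod_{1\le j\le\mathfrak{m}}\L_{1j}\Big)\Big(\prod_{2\le j\le\ell+1}\R^+_{1j}\Big)\cdot\sigma_*\big(D_{\ell,\mathfrak{m}}\big),
\]
where $\sigma_*$ is the reindexing $\L_{ij}\mapsto\L_{i+1,j}$, $\R^+_{ij}\mapsto\R^+_{i+1,j+1}$, $\R^-_{ij}\mapsto\R^-_{ij}$ that shifts every $\alpha$-index up by one. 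The operator $\sigma_*(D_{\ell,\mathfrak{m}})$ never touches the first $\alpha$-slot, so on $\vv_{(k,\alpha),\beta} = \vv^+_k\,\vv_{\alpha,\beta}$ it rewrites the entries of $\alpha$ (now sitting in slots $2,3,\dots$) exactly as $D_{\ell,\mathfrak{m}}$ rewrites them in $\vv_{\alpha,\beta}$; hence $\sigma_*(D_{\ell,\mathfrak{m}})\,\vv_{(k,\alpha),\beta}$ is obtained from $V_{\alpha,\beta} = D_{\ell,\mathfrak{m}}\vv_{\alpha,\beta}$ by prepending $k$ to the first index of every monomial. Since $\L\cdot\R^+\cdot\R^-$ and $D_{\ell+1,\mathfrak{m}}$ agree on $\vv_{(k,\alpha),\beta}$ whenever $\ell\ge\ell(\alpha)$ and $\mathfrak{m}\ge\ell(\beta)$ (the extra factors of $\L\cdot\R^+\cdot\R^-$ act as the identity there), this gives, writing $V_{\alpha,\beta} = \sum_{\lambda,\nu} c_{\lambda,\nu}\vv_{\lambda,\nu}$ in the monomial basis,
\[
V_{(k,\alpha),\beta} = \Big(\prod_{j\ge 1}\L_{1j}\Big)\Big(\prod_{j\ge 2}\R^+_{1j}\Big)\sum_{\lambda,\nu} c_{\lambda,\nu}\,\vv_{(k,\lambda),\nu},
\]
the infinite products acting as finite ones on each monomial. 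I would then simply define $\Psi_k$ on the basis by $\Psi_k(\vv_{\lambda,\nu}) := \big(\prod_{j\ge 1}\L_{1j}\big)\big(\prod_{j\ge 2}\R^+_{1j}\big)\vv_{(k,\lambda),\nu}$ and extend $\Q(t)$-linearly, so that the last display reads $\Psi_k(V_{\alpha,\beta}) = V_{(k,\alpha),\beta}$.

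The only point that genuinely requires care — and which I expect to be the main obstacle — is checking that this $\Psi_k$ is well defined, namely that the right-hand side of the last display is unchanged when the monomials of $V_{\alpha,\beta}$ are written with their index entries permuted (such rearrangements represent the same element of $\mathcal{D}\La_t$). This is precisely the standard phenomenon that symmetric products of raising and lowering operators descend to the ring of symmetric functions: $\prod_{j\ge 2}\R^+_{1j}$ is invariant under permuting the $\alpha$-slots $2,3,\dots$, $\prod_{j\ge 1}\L_{1j}$ is invariant under permuting the $\beta$-slots, and all operators commute, so $\big(\prod_{j\ge 1}\L_{1j}\big)\big(\prod_{j\ge 2}\R^+_{1j}\big)$ commutes with the relevant reorderings; I would invoke \cite[\S2.2]{Tam12} and \cite[(4.1) and below]{BMPS19} here. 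Once well-definedness is in hand everything else is formal: $\Psi_k$ is a $\Q(t)$-linear map with $\Psi_k(V_{\alpha,\beta}) = V_{(k,\alpha),\beta}$, and the reduction of the first paragraph finishes the proof.
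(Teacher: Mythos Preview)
Your argument is correct and the well-definedness check you flag goes through: the operator $\prod_{j\ge 1}\L_{1j}\prod_{j\ge 2}\R^+_{1j}$ is symmetric in the $\alpha$-slots $2,3,\dots$ and in all $\beta$-slots, and moreover it only lowers those slots, so index sequences with a negative entry (which project to zero in $\mathcal{D}\La_t$) stay zero after applying the operator; likewise, padding by zero entries contributes nothing. Your use of the involution $\tau$ to deduce the second identity is also correct.

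The paper's route is closely related but organized differently: rather than constructing an explicit linear operator $\Psi_k$ via the factorization of $D_{\ell+1,\mathfrak{m}}$ that separates off the \emph{first} $\alpha$-slot, the paper invokes the recursions \eqref{eq:recur1}--\eqref{eq:recur2} (which separate off the \emph{last} slot) and runs an induction on $\ell(\mu)$, exactly as in \cite[Proposition~1]{Tam11}. The inductive approach has the advantage that every step is an equation among concrete $V$'s multiplied by $v^\pm_r$, so the well-definedness issue never arises; your operator approach, by contrast, makes the linearity visible in one stroke and reveals that ``prepend $k$'' is literally a single endomorphism of $\mathcal{D}\La_t$. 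Both arguments rest on the same underlying commutativity and factorization of the $D_{\ell,\mathfrak{m}}$ operators.
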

	
	\begin{proof}
		The proof is by induction on $\ell(\mu)$, which is verbatim the same as for \cite[Proposition 1]{Tam11} by using \eqref{eq:recur1} and \eqref{eq:recur2}, hence omitted here.
	\end{proof}

	Given partitions $\mu, \nu$ and $b\in \N$, we use
	\[
	\nu \rightarrow \mu,
	\qquad (\text{respectively, } \nu \stackrel{b}{\rightarrow} \mu)
	\]
	to denote that $\nu \leq \mu$ and $\mu -\nu$ is a horizontal strip (respectively, a horizontal $b$-strip). Denote by $m_i(\mu)$ the number of times $i$ occurs as a part of $\mu$. Denote by $\mu' =(\mu_1', \mu_2', \ldots)$ the conjugate partition of $\mu$.
	
	For a horizontal $r$-strip $\sigma=\la-\nu$,  let $I=I_{\la-\nu}$ (respectively, $J=J_{\la-\nu}$) be the set of integers $i\geq 1$ such that $\sigma_i'=1$ and $\sigma_{i+1}'=0$ (respectively, $\sigma_i'=0$ and $\sigma_{i+1}'=1$). Following \cite[III]{Mac95}, we introduce
	\begin{align}
		\label{eq:phipsi}
		\begin{split}
			\varphi_{\la/\nu}(t) &=\prod\limits_{i\in I_{\la-\nu}}(1-t^{m_i(\la)}),
			\\
			\psi_{\la/\nu}(t) &=\prod\limits_{j\in J_{\la-\nu}}(1-t^{m_j(\nu)}).
		\end{split}
	\end{align}
	
	Our horizontal Pieri rules in Theorem~\ref{thm:Pieri-Ho} below are formulated using the polynomials $\varphi, \psi$ above. To prove that, we shall require  2 mirror identities. The first mirror identity below is similar to Tamvakis \cite[\S2.2]{Tam11}. %Our formula is precisely the  Tamvakis' when one of our two partitions takes the empty set.  %Remarkably, the mirror identity in our setting takes the same form.
	
	For $\gamma =(\gamma_1, \ldots, \gamma_{\ell+1}) \in \N^{\ell+1}$, we set
	\[
	\gamma_{\bullet} :=(\gamma_1, \ldots, \gamma_\ell).
	\]
	
	\begin{proposition} [Mirror identity I] (also see \cite[\S2.2]{Tam11})
		For an integer $b \ge 0$ and partitions $\rho,\mu$ satisfying $\rho \in \N^\ell$, we have
		\begin{align}
			\sum_{\gamma  \in \N^{\ell+1}, |\gamma| = b} (1-t)^{\ell( \gamma_{\bullet})}  \;  V_{\rho +\gamma,\mu}
			&= \sum_{\rho \stackrel{\scriptscriptstyle b}{\rightarrow} \la} \psi_{\la/\rho}(t) V_{\la,\mu};
			\label{iQ:psi}
			\\
			\sum_{\gamma  \in \N^{\ell+1}, |\gamma| = b} (1-t)^{\ell( \gamma_{\bullet})}  \;  V_{\mu,\rho +\gamma}
			&= \sum_{\rho \stackrel{\scriptscriptstyle b}{\rightarrow} \la} \psi_{\la/\rho}(t) V_{\mu,\la}.
			\label{iQ:psi2}
		\end{align}
	\end{proposition}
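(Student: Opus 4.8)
The plan is to deduce both identities from their single-partition (``half'') specializations, where the assertion reduces to the classical Hall--Littlewood mirror identity treated by Tamvakis \cite[\S2.2]{Tam11}. First I would record a symmetry of the construction \eqref{eq:LRq}: the $\Q(t)$-algebra automorphism $\sigma$ of $\mathcal D\La_{t}$ determined by $\sigma(\vv_r^+)=\vv_r^-$, $\sigma(\vv_r^-)=\vv_r^+$ satisfies, as linear operators, $\sigma L_{ij}\sigma^{-1}=L_{ji}$, $\sigma R^+_{ij}\sigma^{-1}=R^-_{ij}$, $\sigma R^-_{ij}\sigma^{-1}=R^+_{ij}$; hence $\sigma\L\sigma^{-1}=\L$ and $\sigma\R^{\pm}\sigma^{-1}=\R^{\mp}$, so that $\sigma(V_{\alpha,\beta})=V_{\beta,\alpha}$ for all integer vectors $\alpha,\beta$. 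Applying $\sigma$ to \eqref{iQ:psi} then produces \eqref{iQ:psi2}, and so it is enough to prove \eqref{iQ:psi}.

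To prove \eqref{iQ:psi} I would first treat the case $\mu=\emptyset$. There $V_{\rho+\gamma,\emptyset}=V^+_{\rho+\gamma}$ and $V_{\la,\emptyset}=V^+_\la$ by Definition~\ref{def:V}, so the claim becomes the single-partition identity
\[
\sum_{\gamma\in\N^{\ell+1},\ |\gamma|=b}(1-t)^{\ell(\gamma_\bullet)}\,V^+_{\rho+\gamma}\;=\;\sum_{\rho\stackrel{b}{\rightarrow}\la}\psi_{\la/\rho}(t)\,V^+_\la .
\]
Since $V^+_\alpha=\R^+(\vv^+_\alpha)$ with $\R^+=\prod_{1\le i<j}\frac{1-R^+_{ij}}{1-tR^+_{ij}}$ of the usual Hall--Littlewood raising-operator shape, the $V^+_\la$ are exactly Tamvakis' Giambelli polynomials, and I would establish the displayed identity by induction on $\ell$ following \cite[\S2.2]{Tam11}. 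The base case $\ell=0$ is $V^+_{(b)}=\vv^+_b$ together with $\psi_{(b)/\emptyset}(t)=1$. For the inductive step I would peel off the last part of $\rho+\gamma$ using the recursion \eqref{eq:recur1} specialized to $\mathfrak m=0$ (so the operators $\L_{ij}$ drop out), collect the resulting lower-length terms, and match them against the recursive description of a horizontal $b$-strip on at most $\ell+1$ rows --- a horizontal strip on the first $\ell$ rows together with an admissible value of the new last part --- using the compatible factorization of $\psi_{\la/\rho}(t)$. Once the case $\mu=\emptyset$ is in hand, the general \eqref{iQ:psi} follows immediately from Proposition~\ref{prop:HT}: the equation above has all second components equal to $\emptyset$, and prepending the integer vector $\mu$ to that slot replaces each $V_{\cdot,\emptyset}$ by $V_{\cdot,(\mu,\emptyset)}=V_{\cdot,\mu}$.

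I expect the only genuine obstacle to be the bookkeeping in the inductive step for the single-partition identity: one must track precisely how the factors $1-t^{m_j(\rho)}$ in the definition \eqref{eq:phipsi} of $\psi_{\la/\rho}(t)$ behave as the last row is adjoined, i.e., exactly when an index enters or leaves the set $J_{\la-\rho}$, and check that the weights $(1-t)^{\ell(\gamma_\bullet)}$ produced by \eqref{eq:recur1} recombine into the products $\psi_{\la/\rho}(t)$. This is the content of Tamvakis' computation in \cite[\S2.2]{Tam11}, which carries over unchanged because the raising operator $\R^+$ governing $V^+_\la$ is the same as in his setting; the remaining ingredients --- the symmetry $\sigma$ and the appeal to Proposition~\ref{prop:HT} --- are routine.
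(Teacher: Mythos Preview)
Your proposal is correct and follows essentially the same route as the paper: reduce to the case $\mu=\emptyset$, invoke Tamvakis' single-partition mirror identity \cite[\S2.2]{Tam11} there, and then upgrade to general $\mu$ via Proposition~\ref{prop:HT}; the duality between \eqref{iQ:psi} and \eqref{iQ:psi2} is handled in the paper by the word ``dual'', which your explicit involution $\sigma$ makes precise. The only embellishment is your sketch of the inductive argument for the $\mu=\emptyset$ case, which the paper simply outsources to \cite{Tam11} (via \cite[Proposition~2.8]{LRW25} at $\vth=0$).
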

	
	\begin{proof}
		Since \eqref{iQ:psi2} is dual to \eqref{iQ:psi}, we only prove the first one here.
		
		For $\mu=\emptyset$, it is 
		\begin{align}
			\sum_{\gamma  \in \N^{\ell+1}, |\gamma| = b} (1-t)^{\ell( \gamma_{\bullet})}  \;  V^+_{\rho +\gamma}
			&= \sum_{\rho \stackrel{\scriptscriptstyle b}{\rightarrow} \la} \psi_{\la/\rho}(t) V^+_{\la},
		\end{align}
		which is the special case of \cite[Proposition 2.8]{LRW25} by setting $\vth=0$ there. In fact, its proof is entirely due to Tamvakis \cite[\S2.2]{Tam11}.
		Then the formula \eqref{iQ:psi} follows from Proposition \ref{prop:HT}.  
	\end{proof}
	
	The second mirror identity below is similar to \cite[Theorem 1]{Tam11}. In our context, this identity is a lowering operator counterpart to the mirror identity \eqref{iQ:psi}; this shall become clear shortly in the proof of Theorem~\ref{thm:Pieri-Ho}.
	
	Recall $\varphi_{\la/\nu}(t)$ from \eqref{eq:phipsi} and $\ell(\alpha)$ from \eqref{eq:number}.
	
	\begin{proposition} [Mirror identity II] (see \cite[Theorem 1]{Tam11})
		For an integer $a \ge 0$ and partitions $\nu,\tilde{\rho}$ satisfying $\nu \in \N^\ell$, we have
		\begin{align}
			\sum_{\beta \in \N^\ell, |\beta|=a} (1-t)^{\ell( \beta)}  \;   V_{\nu -\beta,\tilde{\rho}}
			&= \sum_{\mu \stackrel{a}{\rightarrow} \nu}  \varphi_{\nu/\mu}(t)   V_{\mu,\tilde{\rho}};
			\label{iQ:phi2}
			\\
			\sum_{\beta \in \N^\ell, |\beta|=a} (1-t)^{\ell( \beta)}  \;   V_{{\tilde{\rho},}\nu -\beta}
			&= \sum_{\mu \stackrel{a}{\rightarrow} \nu}  \varphi_{\nu/\mu}(t)   V_{\tilde{\rho},\mu}.
			\label{iQ:phi}
		\end{align}
	\end{proposition}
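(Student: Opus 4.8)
The plan is to follow the two-step template already used for Mirror Identity~I. First, \eqref{iQ:phi} is \emph{dual} to \eqref{iQ:phi2}: the $\Q(t)$-algebra automorphism $\phi$ of $\mathcal{D}\La_t$ interchanging $\vv^+_r \leftrightarrow \vv^-_r$ has the property that conjugation by $\phi$ swaps the shift operators $R^+_{ij} \leftrightarrow R^-_{ij}$ and sends $L_{ij}$ to $L_{ji}$; hence $\phi$ fixes $\L = \prod_{i,j\ge 1}\L_{ij}$ and interchanges $\R^+ \leftrightarrow \R^-$, and since all these operators commute, formula \eqref{eq:LRq} gives $\phi(V_{\alpha,\beta}) = V_{\beta,\alpha}$. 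Applying $\phi$ to \eqref{iQ:phi2} therefore yields \eqref{iQ:phi}, so it suffices to prove \eqref{iQ:phi2}. For this I would reduce to the case $\tilde\rho = \emptyset$: granting the single-partition identity
\[
\sum_{\beta \in \N^\ell,\; |\beta| = a} (1-t)^{\ell(\beta)}\, V^+_{\nu - \beta} \;=\; \sum_{\mu \stackrel{a}{\rightarrow} \nu} \varphi_{\nu/\mu}(t)\, V^+_\mu
\]
in $\mathcal{D}\La_t$ (where $V^+_\gamma = V_{\gamma,\emptyset}$), the second assertion of Proposition~\ref{prop:HT}, applied with the integer vector $\tilde\rho$ appended to the empty second slot so that $V_{\gamma,\emptyset} \mapsto V_{\gamma,(\tilde\rho,\emptyset)} = V_{\gamma,\tilde\rho}$, turns this identity into exactly \eqref{iQ:phi2}.

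It then remains to prove the single-partition identity above. By Definition~\ref{def:V} one has $V^+_\gamma = \R^+(\vv^+_\gamma)$, so it lives entirely in the polynomial subring $\Q(t)[\vv^+_1, \vv^+_2, \dots] \subseteq \mathcal{D}\La_t$ and involves only the raising operator $\R^+ = \prod_{1\le i<j}\frac{1 - R^+_{ij}}{1 - t R^+_{ij}}$ acting on products of the commuting degree-$r$ variables $\vv^+_r$ (with $\vv^+_0 = 1$, $\vv^+_a = 0$ for $a<0$, raising operators applied in the justified sense of \cite{Tam12,BMPS19}). This is precisely the framework of Tamvakis' \cite[Theorem~1]{Tam11} (cf.\ also \cite{LRW25}), which I would import verbatim; that proof is an induction on $\ell = \ell(\nu)$ via the one-partition analogue of the recursion \eqref{eq:recur1} (which peels off the last part of $\nu$), matching the coefficients $(1-t)^{\ell(\beta)}$ against the products $\varphi_{\nu/\mu}(t) = \prod_{i \in I_{\nu-\mu}}(1 - t^{m_i(\nu)})$ of \eqref{eq:phipsi}.

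The step carrying the real content is this single-partition identity, but it requires no new idea: it is exactly Tamvakis' Theorem~1, stated for an arbitrary family of commuting degree-$r$ variables and hence transferring verbatim to $\Q(t)[\vv^+_1, \vv^+_2, \dots]$. The remaining steps are routine, the reduction being a direct application of Proposition~\ref{prop:HT} together with the symmetry of \eqref{eq:LRq} in its two tensor factors. The only points deserving a line of care are the bookkeeping ones: checking the conjugation relations $\phi R^+_{ij}\phi^{-1} = R^-_{ij}$ and $\phi L_{ij}\phi^{-1} = L_{ji}$ so that \eqref{iQ:phi} legitimately follows from \eqref{iQ:phi2}, and appending $\tilde\rho$ to the correct slot when invoking Proposition~\ref{prop:HT}. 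Once \eqref{iQ:phi2} and \eqref{iQ:phi} are in hand (the lowering-operator mirror images of \eqref{iQ:psi} and \eqref{iQ:psi2}), they feed into the proof of the horizontal Pieri rules, Theorem~\ref{thm:Pieri-Ho}.
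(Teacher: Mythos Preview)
Your proposal is correct and follows essentially the same approach as the paper: reduce \eqref{iQ:phi} to \eqref{iQ:phi2} by duality, establish \eqref{iQ:phi2} for $\tilde\rho=\emptyset$ by quoting \cite[Theorem~1]{Tam11}, and then extend to arbitrary $\tilde\rho$ via Proposition~\ref{prop:HT}. You supply more detail than the paper does (the explicit automorphism $\phi$ realizing the duality, and a sketch of Tamvakis' induction), but the argument is the same.
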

	
	\begin{proof}
		We only prove the first formula since the other one is dual.
		
		For $\tilde{\rho}=\emptyset$, it is 
		\begin{align}
			\sum_{\beta \in \N^\ell, |\beta|=a} (1-t)^{\ell( \beta)}  \;   V^+_{\nu -\beta}
			&= \sum_{\mu \stackrel{a}{\rightarrow} \nu}  \varphi_{\nu/\mu}(t)   V^+_{\mu},
		\end{align}
		which is given in \cite[Theorem 1]{Tam11}.
		Then the formula \eqref{iQ:phi2} follows from Proposition \ref{prop:HT}.  
	\end{proof}

	\subsection{Horizontal Pieri rules}
	
	%For partitions $\mu, \la$ and $r \in \N$, denote
	%\begin{align}
	%\cj_{\mu,r}^\la &= \big \{\nu \in \cp ~\big |~ \nu \rightarrow \mu, \mu \rightarrow \la, \text{ and } |\mu-\nu| +|\la-\nu| =r \}.
	% \label{eq:cJ}
	%\end{align}
	
	As we shall see, the following Pieri rules correspond to the multiplication formula \eqref{Pieri:h} in the derived Hall algebra setting.

	\begin{theorem}  [Horizontal Pieri rules] %: down-up]
		\label{thm:Pieri-Ho}
		For $r\ge 1$ and any partitions $\rho,\nu$, we have
		%\begin{align}
		%\iV_\mu  \cdot \vv_r
		%&= \sum_{\la \in \mathcal P} \sum_{\nu \in \cj^\la_{\mu,r}} \vth^{|\mu/\nu|}\, \varphi_{\mu/\nu}(t) \psi_{\la/\nu}(t) \;  \iV_\la.
		% \label{Pieri:hSF3}
		%\end{align}
		\begin{align}
			V_{\rho,\nu}  \cdot \vv^+_r
			&=\sum_{a+b=r} \sum_{\mu \stackrel{a}{\rightarrow} \nu ,\rho\stackrel{b}{\rightarrow} \la} \, \varphi_{\nu/\mu}(t) \psi_{\la/\rho}(t) \;  V_{\la,\mu},
			\label{Pieri:hSF3}
		\\
			V_{\rho,\nu}  \cdot \vv^-_r
			&=\sum_{a+b=r} \sum_{\nu \stackrel{a}{\rightarrow} \mu ,\la\stackrel{b}{\rightarrow} \rho} \, {\psi_{\mu/\nu}(t) \varphi_{\rho/\la}(t)} \;  V_{\la,\mu}.
			\label{Pieri:hSF32}
		\end{align}
	\end{theorem}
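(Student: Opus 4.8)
The plan is to prove the first identity \eqref{Pieri:hSF3} and then read off the second one \eqref{Pieri:hSF32} for free: interchanging the two families of generators gives a $\Q(t)$-algebra involution $\sigma$ of $\mathcal{D}\La_t$ with $\sigma(\vv_r^+)=\vv_r^-$, and since conjugation by $\sigma$ interchanges $\R^+\leftrightarrow\R^-$ and preserves $\L$, Definition~\ref{def:V} gives $\sigma(V_{\alpha,\beta})=V_{\beta,\alpha}$; applying $\sigma$ to \eqref{Pieri:hSF3} with $(\rho,\nu)$ replaced by $(\nu,\rho)$ and relabelling the summation partitions yields \eqref{Pieri:hSF32}.

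The core step is the following \emph{unstraightening} identity, valid for arbitrary compositions $\rho\in\N^{\ell}$, $\nu\in\N^{\mathfrak{m}}$ and $r\ge1$ (with $\rho$ padded by a trailing zero so that $\rho+\gamma\in\N^{\ell+1}$):
\[
V_{\rho,\nu}\cdot\vv_r^+=\sum_{a+b=r}\ \sum_{\substack{\beta\in\N^{\mathfrak{m}}\\ |\beta|=a}}\ \sum_{\substack{\gamma\in\N^{\ell+1}\\ |\gamma|=b}}(1-t)^{\ell(\beta)+\ell(\gamma_\bullet)}\,V_{\rho+\gamma,\ \nu-\beta}.
\]
To prove it I first note that $V_{\rho,\nu}\cdot\vv_r^+=D_{\ell,\mathfrak{m}}(\vv_{(\rho,r),\nu})$ and $V_{\rho+\gamma,\nu-\beta}=D_{\ell+1,\mathfrak{m}}(\vv_{\rho+\gamma,\nu-\beta})$, since $D_{\ell,\mathfrak{m}}$ acts trivially on the $(\ell+1)$-st positive slot. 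By the manipulation of $D_{\ell,\mathfrak{m}}$ following \eqref{eq:D} and the commutativity of all raising and lowering operators, $D_{\ell+1,\mathfrak{m}}=D_{\ell,\mathfrak{m}}\cdot\prod_{j\le\mathfrak{m}}\L_{\ell+1,j}\cdot\prod_{i\le\ell}\R^+_{i,\ell+1}$. Since $D_{\ell,\mathfrak{m}}$ is an invertible operator, it therefore suffices to verify the purely monomial identity
\[
\vv_{(\rho,r),\nu}=\prod_{j\le\mathfrak{m}}\L_{\ell+1,j}\cdot\prod_{i\le\ell}\R^+_{i,\ell+1}\Big(\sum_{a+b=r}\sum_{\beta,\gamma}(1-t)^{\ell(\beta)+\ell(\gamma_\bullet)}\vv_{\rho+\gamma,\nu-\beta}\Big).
\]
Expanding $\L_{\ell+1,j}$ and $\R^+_{i,\ell+1}$ via \eqref{eq:L} and extracting the coefficient of each monomial $\vv_{\xi,\zeta}$, that coefficient factorizes into $\ell+\mathfrak{m}$ independent one-variable sums, each of the form $(1-t)\sum_{d=0}^{m-1}c(d)+c(m)$ with $c(0)=1$ and $c(d)=(1-t^{-1})t^{d}$ for $d\ge1$; a short telescoping computation shows this equals $1$ if $m=0$ and $0$ if $m\ge1$, so the only surviving monomial is $\vv_{(\rho,r),\nu}$ with coefficient $1$.

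With the unstraightening identity in hand and $\rho,\nu$ now genuine partitions, I finish by chaining the two mirror identities, one in each slot. By Proposition~\ref{prop:HT}, the identity \eqref{iQ:phi} holds when the prepended vector is an arbitrary integer vector, so applying it with $\tilde\rho=\rho+\gamma$ turns $\sum_{|\beta|=a}(1-t)^{\ell(\beta)}V_{\rho+\gamma,\nu-\beta}$ into $\sum_{\mu\stackrel{a}{\rightarrow}\nu}\varphi_{\nu/\mu}(t)\,V_{\rho+\gamma,\mu}$. Then, for each resulting partition $\mu$, applying \eqref{iQ:psi} with $\rho\in\N^{\ell}$ turns $\sum_{|\gamma|=b}(1-t)^{\ell(\gamma_\bullet)}V_{\rho+\gamma,\mu}$ into $\sum_{\rho\stackrel{b}{\rightarrow}\la}\psi_{\la/\rho}(t)\,V_{\la,\mu}$. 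Substituting back into the unstraightening identity reproduces exactly the right-hand side of \eqref{Pieri:hSF3}.

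I expect the unstraightening identity to be the main obstacle: one must choose precisely the decomposition of $D_{\ell+1,\mathfrak{m}}$ over $D_{\ell,\mathfrak{m}}$, track the slot indices carefully, and isolate the one-variable cancellation $(1-t)\sum_{d=0}^{m-1}c(d)+c(m)=0$ for $m\ge1$. The subsequent chaining is routine, but one must apply \eqref{iQ:phi} in the $\nu$-slot before \eqref{iQ:psi} in the $\rho$-slot, and observe that the single outer sum over all splittings $a+b=r$ is precisely what decouples the two horizontal strips and produces the product $\varphi_{\nu/\mu}(t)\psi_{\la/\rho}(t)$.
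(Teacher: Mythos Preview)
Your proposal is correct and follows the same route as the paper: both write $V_{\rho,\nu}\cdot v_r^+ = D_{\ell,\mathfrak{m}}(v_{(\rho,r),\nu})$, use the factorization $D_{\ell+1,\mathfrak{m}} = D_{\ell,\mathfrak{m}}\cdot\prod_{j}\L_{\ell+1,j}\cdot\prod_{i}\R^+_{i,\ell+1}$, and then apply the two mirror identities \eqref{iQ:phi} and \eqref{iQ:psi} in that order to straighten the $\nu-\beta$ and $\rho+\gamma$ slots. The only cosmetic difference is that the paper expands $\L_{\ell+1,j}^{-1}$ and $(\R^+_{i,\ell+1})^{-1}$ directly and interleaves these expansions with the mirror steps, whereas you first consolidate both expansions into a single ``unstraightening identity'' (verified by the inverse telescoping computation) and then chain the two mirrors afterward.
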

	
	\begin{proof}
    We only prove \eqref{Pieri:hSF3}. 
		Set $\ell =\ell(\rho)$ and $\mathfrak{m}=\ell(\nu)$. Recall from \eqref{eq:L}--\eqref{eq:D} that 
		$\L_{ij}^{-1} =1 +(1-t) \sum_{k\ge 1} L_{ij}^k$ and $$D_{\ell,\mathfrak{m}}  =
		\prod_{\substack{1\le i\le\ell\\1\le j\le \mathfrak{m}}} \L_{ij} \cdot \prod_{1\le i<j\le\ell} \R_{ij}^+\cdot\prod_{1\le i<j\le\mathfrak{m}} \R_{ij}^-.$$ 
		Then we have
		\begin{align}
			V_{\rho,\nu}  \cdot \vv^+_r
			&= D_{\ell,\mathfrak{m}} (\vv_{(\rho,r),\nu})
			\notag
			\\&=D_{\ell+1,\mathfrak{m}}\cdot\prod_{\substack{1\le j\le \mathfrak{m}}} (\L_{\ell+1,j})^{-1} \cdot \prod_{1\le i\le\ell} (\R_{i,\ell+1}^+)^{-1} (\vv_{(\rho,r),\nu})
			\notag\\&=D_{\ell+1,\mathfrak{m}} \cdot \prod_{1\le i\le\ell} (\R_{i,\ell+1}^+)^{-1}
			\cdot \prod_{\substack{1\le j\le \mathfrak{m}}}(1 +(1-t) \sum_{k\ge 1}  L_{\ell+1,j}^k)(\vv_{(\rho,r),\nu})
			\notag\\&=D_{\ell+1,\mathfrak{m}} \cdot \prod_{1\le i\le\ell} (\R_{i,\ell+1}^+)^{-1}
			\sum_{\beta \in \N^\mathfrak{m}}   (1-t)^{\ell( \beta)}  \;   \vv_{(\rho , r-|\beta|),\nu-\beta}
			\notag\\&=\prod_{1\le i\le\ell} (\R_{i,\ell+1}^+)^{-1}
			\sum_{0\le a\le r}\sum_{\beta \in \N^\mathfrak{m},|\beta|=a}   (1-t)^{\ell( \beta)}  \;   V_{(\rho , r-|\beta|),\nu-\beta}
			\notag\\ \label{iQ:r1} 
			&=\prod_{1\le i\le\ell} (\R_{i,\ell+1}^+)^{-1} \sum_{0\le a\le r} \sum_{\mu \stackrel{a}{\rightarrow} \nu}  \varphi_{\nu/\mu}(t)
			V_{(\rho,r-a), \mu},
		\end{align}
		where we have used the mirror identity \eqref{iQ:phi} with $\tilde{\rho}=(\rho,r-a)$ in the last step.
		
		%Then,  a simple but key next step is to convert $V_{(\rho,r-a),\mu}$ back to $\vv_{(\rho,r-a),\mu}$ and work with the raising/lowering operators. 
		Observing that $V_{(\rho,r-a),\mu} = D_{\ell+1,\mathfrak{m}}(\vv_{(\rho,r-a),\mu})$, and $D_{\ell+1,\mathfrak{m}}$ commutes with $(\R^+_{i,\ell+1})^{-1}$ for $1\le i\le r$, we can rewrite the RHS of \eqref{iQ:r1} as
		\begin{align}
			\label{iQ:r2}
			V_{\rho,\nu}  \cdot \vv^+_r
			&=\prod_{1\le i\le\ell} (\R_{i,\ell+1}^+)^{-1} \sum_{0\le a\le r} \sum_{\mu \stackrel{a}{\rightarrow} \nu}  \varphi_{\nu/\mu}(t)
			D_{\ell+1,\mathfrak{m}}(\vv_{(\rho,r-a),\mu})
			\notag\\&= \sum_{0\le a\le r}\sum_{\mu \stackrel{a}{\rightarrow} \nu}  \varphi_{\nu/\mu}(t)
			D_{\ell+1,\mathfrak{m}}\cdot\prod_{1\le i \le \ell} (1 +(1-t) \sum_{k\ge 1} (R_{i,\ell+1}^+)^k)(\vv_{(\rho,r-a),\mu})
			\notag\\
			&= \sum_{0\le a\le r}\sum_{\mu \stackrel{a}{\rightarrow} \nu} \varphi_{\nu/\mu}(t)
			D_{\ell+1,\mathfrak{m}}
			(\sum_{\gamma  \in \N^{\ell+1}, |\gamma| =r-a} (1-t)^{\ell( \gamma_{\bullet})} \vv_{(\rho +\gamma),\mu})
			\notag \\
			&= \sum_{0\le a\le r}\sum_{\mu \stackrel{a}{\rightarrow} \nu}  \varphi_{\nu/\mu}(t)
			\cdot\sum_{\gamma  \in \N^{\ell+1}, |\gamma| =r-a} (1-t)^{\ell( \gamma_{\bullet})} V_{(\rho +\gamma),\mu}.
		\end{align}
		The theorem follows now by applying the mirror identity \eqref{iQ:psi} to the RHS of  \eqref{iQ:r2}.
	\end{proof}

	\subsection{Double Hall-Littlewood functions}

	We recall the Hall-Littlewood (HL) functions in variables $x=(x_1, x_2, \ldots)$, following \cite[Pages 208--211]{Mac95}. Denote by $h_r$ the $r$th complete symmetric function, for $r\ge 0$.
	%Let
	%\begin{align*}
	%q_r(x_1,\dots,x_n;t)
	%=(1-t)P_{(r)}(x;t)
	%=(1-t) \sum_{i=1}^n x_i^r\prod_{j\neq i} \frac{x_i-tx_j}{x_i-x_j}
	%\end{align*}
	%for $r\geq1$; and $q_0=q_0(x;t)=1$.
	For an indeterminate $u$, denote
	\begin{align*}
		H(u)&= \sum_{r=0}^\infty h_r u^r = \prod_{i\ge 1} \frac1{1-ux_i},
		\qquad
		Q(u) = \sum_{r=0}^\infty Q_{r} u^r=H(u)/H(tu).
	\end{align*}
	
	We set $q_r =Q_r$, and $q_\la =q_{\la_1} q_{\la_2} \cdots$, for any composition $\la$.
	Let $u_1,u_2,\cdots$ be independent indeterminates. Recall $F(u)=(1-u)/(1-tu)$ from \eqref{eq:F}. Then the HL symmetric function $Q_\la$ is the coefficient of $u^\la=u_1^{\la_1} u_2^{\la}\cdots$ in
	\begin{align}
		\label{eq:HL}
		Q(u_1,u_2,\dots)= \prod_{i\geq1}Q(u_i) \prod_{i<j} F(u_iu_j^{-1}).
	\end{align}
	According to \cite[Page 212]{Mac95}, the generating function \eqref{eq:HL} for HL functions $Q_\la$ can be restated that
	\begin{align}  \label{eq:Rq}
		Q_\la = \prod_{i<j} \frac{1 -R_{ij}}{1 -tR_{ij}} q_\la
		=\prod_{i<j} \Big(1 + (t-1) \sum_{r\ge 1} t^{r-1}  R_{ij}^r  \Big) q_\la.
	\end{align}

	%
	%\subsection{$\imath$HL functions}
	%The ring $\Lambda_t=\Q(t)[q_1,q_2,\ldots]$ is the ring of symmetric functions. 
	We shall set $\vv_r^\pm =q^\pm_r= Q^\pm_{r}$ in the definition \eqref{eq:LRq} of $V_{\la,\mu}$, for any compositions $\la,\mu$.  In this way, we identify the ring $\mathcal{D}\La_{t} = \Q(t)[q_1^\pm, q_2^\pm, \ldots]$ with the ring of double symmetric functions $\Q(t) [x_1^+, x_2^+, \ldots,x_1^-,x_2^-,\ldots]^{\mathfrak{S}_\infty\times \mathfrak{S}_\infty}$. 
	For any partition $\lambda$, we can formulate two versions of HL symmetric functions, denoted by $Q_\lambda^\pm$ in $\cd\Lambda_t$. And we denote by $Q^\pm(u)=\sum_{r=0}^\infty Q^\pm_ru^r$. 
	Then we see from \eqref{eq:LRq} and \eqref{eq:Rq} that if one of the partitions is empty, then $V_{\la,\emptyset}$ or $V_{\emptyset,\la}$ coincides with $Q_\la^+$ or $Q_\la^-$.
	On the other hand, $V_{\la,\mu}$ gives us another class of double symmetric functions, which are called {\em double HL symmetric functions} and denoted by $Q_{\la,\mu}$. Alternatively, in the spirit of the equivalence between \eqref{eq:HL} and \eqref{eq:Rq}, we can define $Q_{\la,\mu}$ as follows.
	
	\begin{definition} [double HL functions]
		\label{def:iQ}
		For any compositions $\la$ and $\mu$, $Q_{\la,\mu}$ is the coefficient of $u^\la w^\mu=u_1^{\la_1}u_2^{\la_2}\cdots w_1^{\mu_1}w_2^{\mu_2}\cdots$ in
		\begin{align}
			\label{HL functions}
			Q(u,w)=&Q(u_1,u_2,\cdots,w_1,w_2,\cdots)
			\\\notag=&\prod_{i\geq1} Q^+(u_i)\prod_{j\geq1} Q^-(w_j)\prod_{i<i'} F(u_i^{-1}u_{i'})\prod_{j<j'} F(w_j^{-1}w_{j'})\prod_{i,j} F(u_iw_j)
			\\\notag
			=&\prod_{i\geq1} Q^+(u_i)\prod_{j\geq1} Q^-(w_j)\prod_{i<i'} \frac{1-u_i^{-1}u_{i'}}{1- tu_i^{-1}u_{i'}}
			\prod_{j<j'} \frac{1-w_j^{-1}w_{j'}}{1- tw_j^{-1}w_{j'}}\prod_{i,j} \frac{1-u_iw_{j}}{1- tu_iw_{j}}.
		\end{align}
	\end{definition}

	%%%
	%%%
	\subsection{Modified HL functions}
	Let $x=(x_1, x_2, \ldots)$. Recall the modified Hall-Littlewood functions $H_\la(x; t)$ can be defined using plethysm, $H_\la(x; t) =Q_\la(x/(1-t); t)$, cf. \cite[Ex.~7, Page 234]{Mac95}, where it was denoted by $Q_\la'$. It can also be defined via Garsia's version of Jing's vertex operators \cite{J91}.
	Via raising operators the modified HL functions are defined to be
	\begin{align}  \label{eq:HRh}
		H_\mu = \prod_{1\le i<j}  \frac{1 -R_{ij}}{1 -tR_{ij}} h_\mu.
	\end{align}
	Note $H_{(r)} =h_r$.
	
	In this subsection, we shall take $\vv_r^{\pm} =h_r^{\pm}$ in the definition \eqref{eq:LRq}, and then redenote $V_{\la,\mu}$ as $H_{\la,\mu}$.
	In this way, we identify $\cd\La_{t} = \Q(t)[h_1^{\pm}, h_2^{\pm}, \ldots]$ with the ring of double symmetric functions.
	
	\begin{definition}
		The modified double HL function $H_{\la,\mu} \in \cd\La_{t}$, for any compositions $\la$,$\mu$, is given by
		\begin{align}  %\label{eq:LRq}
			H_{\la,\mu} &= \L\cdot\R^+\cdot\R^-(h_{\la,\mu})= \L\cdot\R^+\cdot\R^-(h^+_{\la}h^-_{\mu}).
		\end{align}
	\end{definition}
	Then $H_{\la,\emptyset}$ is just the modified HL function $H_\la(x;t)$.

	Now let us set $t=0$, and consider the ring of double symmetric functions $\cd\La := \Q [h_1^\pm, h_2^\pm, \ldots]$. 
	Recall that in \cite{SV15}, for partitions $\la,\mu$, the Schur-Laurent functions $s_{\la,\mu}$ are given by 
	\begin{align}
		\label{schur}
		s_{\la,\mu}=\mathrm{det}\begin{pmatrix}
			h^-_{\mu_s}&h^-_{\mu_s-1}&\cdots &h^-_{\mu_s-s-r+1}\\
			\vdots&\vdots&&\vdots\\
			h^-_{\mu_1+s-1}&h^-_{\mu_1+s-2}&\cdots &h^-_{\mu_1-r}\\
			h^+_{\la_1-s}&h^+_{\la_1-s+1}&\cdots &h^+_{\la_1+r-1}\\
			\vdots&\vdots&&\vdots\\
			h^+_{\la_r-s-r+1}&h^+_{\la_r-s-r+2}&\cdots &h^+_{\la_r}\\
		\end{pmatrix},
	\end{align}
	where $\ell(\la)=r$ and $\ell(\mu)=s$. If $\mu=\emptyset$, then $s_\lambda=s_{\lambda,\emptyset}$ is just the Schur function.  
	On the other hand, we have
	\begin{align}
		\label{t=0}
		H_{\la,\mu}|_{t=0}=\prod_{i,j\ge1} (1-L_{ij})\cdot \prod_{1\le i<j}(1-R_{ij}^+)\cdot \prod_{1\le i<j}(1-R_{ij}^-)(h_{\la,\mu}).
	\end{align}
	
	As a generalization of the well-known result $s_\la=H_\la|_{t=0}$ (see e.g. \cite{Mac95}), we have the following proposition.
	
	\begin{proposition}
		For any partitions $\la$ and $\mu$, we have $
		s_{\la,\mu}=Q_{\la,\mu}|_{t=0}.%\textcolor{red}{H_{\la,\mu}|_{t=0}}.
		$
	\end{proposition}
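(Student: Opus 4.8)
The plan is to prove the identity in the equivalent form $\prod_{i,j}(1-L_{ij})\cdot\prod_{i<j}(1-R^{+}_{ij})\cdot\prod_{i<j}(1-R^{-}_{ij})(h_{\la,\mu})=s_{\la,\mu}$. This is legitimate: $Q_{\la,\mu}$ is $V_{\la,\mu}$ with $\vv^{\pm}_r=Q^{\pm}_r$, and since $Q(u)=H(u)/H(tu)$ degenerates to $H(u)$ we have $Q^{\pm}_r|_{t=0}=h^{\pm}_r$, while the coefficients of $\L,\R^{+},\R^{-}$ lie in $\Q[t]$ (e.g. the coefficient of $L_{ij}^{s}$ in $\L_{ij}$ is $t^{s}-t^{s-1}$), so $Q_{\la,\mu}|_{t=0}$ is exactly the left-hand side above, which is formula \eqref{t=0}.

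First I would compute $Q_{\la,\mu}|_{t=0}$ from the generating-function description of Definition~\ref{def:iQ}, valid for all $t$. With $r=\ell(\la)$, $s=\ell(\mu)$, specializing \eqref{HL functions} using $Q^{\pm}(u)|_{t=0}=H^{\pm}(u):=\sum_{k\ge0}h^{\pm}_ku^k$ and $F(u)|_{t=0}=1-u$ (see \eqref{eq:F}) shows that $Q_{\la,\mu}|_{t=0}$ is the coefficient of $u^{\la}w^{\mu}$ in
\begin{align*}
\prod_{i}H^{+}(u_i)\prod_{j}H^{-}(w_j)\prod_{i<i'}(1-u_i^{-1}u_{i'})\prod_{j<j'}(1-w_j^{-1}w_{j'})\prod_{i,j}(1-u_iw_j).
\end{align*}
The key move is to expand the mixed factor by the dual Cauchy identity $\prod_{i,j}(1-u_iw_j)=\sum_{\nu\subseteq(s^{r})}(-1)^{|\nu|}s_{\nu}(u_1,\dots,u_r)s_{\nu'}(w_1,\dots,w_s)$ (cf. \cite[I, \S4]{Mac95}) and then to extract coefficients term by term. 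Combining the bialternant formula for $s_{\nu}$ with the skew Jacobi--Trudi identity (cf. \cite[I, \S3 and \S5]{Mac95}), one checks that the coefficient of $u^{\la}$ in $\prod_iH^{+}(u_i)\prod_{i<i'}(1-u_i^{-1}u_{i'})\,s_{\nu}(u)$ equals $\det\big(h^{+}_{\la_i-\nu_j-i+j}\big)_{i,j=1}^{r}=s_{\la/\nu}(x^{+})$, and symmetrically for the $w$-variables; hence
\begin{align*}
Q_{\la,\mu}|_{t=0}=\sum_{\nu\subseteq(s^{r})}(-1)^{|\nu|}\,s_{\la/\nu}(x^{+})\,s_{\mu/\nu'}(x^{-}).
\end{align*}
(The same expansion also follows purely in the raising/lowering calculus: applying $\prod_{i<j}(1-R^{\pm}_{ij})$ to $\vv_{\la,\mu}$ gives Jacobi--Trudi determinants, and then $\prod_{i,j}(1-L_{ij})$, expanded over subsets of $[r]\times[s]$, produces a signed double sum of products of $h^{+}$- and $h^{-}$-type Jacobi--Trudi determinants that re-sums into the dual Cauchy shape.)

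It then remains to identify this sum with the determinant \eqref{schur}. For that I would Laplace-expand the $(r+s)\times(r+s)$ determinant along its last $r$ rows; since those rows consist entirely of $h^{+}$'s and the first $s$ rows entirely of $h^{-}$'s, each $r$-subset $C=\{c_1<\cdots<c_r\}$ of columns contributes the product of an $h^{+}$-minor and the complementary $h^{-}$-minor. Under the standard bijection $C\leftrightarrow\nu$, $\nu_l=s+l-c_l$, between $r$-subsets of $[r+s]$ and partitions $\nu\subseteq(s^{r})$, the $h^{+}$-minor is exactly $\det\big(h^{+}_{\la_k-\nu_l-k+l}\big)=s_{\la/\nu}(x^{+})$; the complementary $h^{-}$-minor, after reversing both its rows and its columns (net sign $+1$), becomes $s_{\mu/\nu'}(x^{-})$, because passing to the complementary subset corresponds to conjugating the partition; and the Laplace sign $(-1)^{rs+\binom{r+1}{2}+\sum C}$ collapses to $(-1)^{|\nu|}$ since $\sum C=rs+\binom{r+1}{2}-|\nu|$. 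Summing over $C$ gives $s_{\la,\mu}=\sum_{\nu}(-1)^{|\nu|}s_{\la/\nu}(x^{+})s_{\mu/\nu'}(x^{-})=Q_{\la,\mu}|_{t=0}$.

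The hard part will be this last step: running the rectangle--subset correspondence simultaneously on the $h^{+}$-side and the conjugate $h^{-}$-side, and checking that the row/column reversals together with the Laplace sign combine into the single factor $(-1)^{|\nu|}$. If a Cauchy-type expansion of the determinant \eqref{schur} is already recorded in \cite{SV15}, this step can simply be cited, and the dual-Cauchy coefficient extraction in the second step becomes the only real content.
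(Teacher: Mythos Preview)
Your argument is correct and takes a genuinely different route from the paper. The paper repackages the three families of operators $L_{ij},R^{+}_{ij},R^{-}_{ij}$ into a single ``generalized raising operator'' $R_{ij}$ acting on $(r,s)$-compositions $\alpha=(-\mu_s,\dots,-\mu_1,\la_1,\dots,\la_r)$, so that $H_{\la,\mu}|_{t=0}=\prod_{i<j}(1-R_{ij})h_{\alpha}$. It then observes that in the Laurent polynomial ring $\Q[x_{-s}^{\pm1},\dots,x_r^{\pm1}]$ the Vandermonde identity $\sum_{\omega}\epsilon(\omega)x^{\alpha+\delta-\omega(\delta)}=\prod_{i<j}(1-R_{ij})x^{\alpha}$ holds, and pushes this forward through the $\Q$-linear map $x^{\beta}\mapsto h_{\beta}$ to obtain $s_{\la,\mu}=\prod_{i<j}(1-R_{ij})h_{\alpha}$ directly. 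No Laplace expansion, no dual Cauchy, and no intermediate formula appears: the $(r+s)\times(r+s)$ determinant is treated as a single Jacobi--Trudi object for the unified operator.

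Your approach instead decouples the positive and negative parts via the dual Cauchy expansion of $\prod_{i,j}(1-u_iw_j)$, yielding the explicit identity $Q_{\la,\mu}|_{t=0}=\sum_{\nu\subseteq(s^r)}(-1)^{|\nu|}s_{\la/\nu}(x^{+})s_{\mu/\nu'}(x^{-})$, and then matches this against a Laplace expansion of \eqref{schur} along the last $r$ rows. This is longer and requires the bookkeeping you flag (the subset--partition bijection on both blocks and the sign check), but it buys something the paper's proof does not: the intermediate sum over $\nu$ is a structural formula of independent interest, exhibiting $s_{\la,\mu}$ as an alternating convolution of ordinary skew Schur functions in the two alphabets. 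The paper's method is slicker but more opaque; yours stays entirely within classical symmetric-function identities and makes the connection to skew Schur functions visible.
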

	
	\begin{proof}
		Denote by $r=\ell(\lambda),s=\ell(\mu)$. Let $\mathfrak{S}_{r+s}$ be the symmetric group. 
		
		An integer sequence $$\alpha=(\alpha_{-s},\alpha_{-s+1},\cdots,\alpha_{-1},\alpha_1,\alpha_2,\cdots,\alpha_r)$$
		is called a {\em $(r,s)$-composition}   if $\alpha_i\ge0$ for $1\le i\le r$ and $\alpha_j\le0$ for $-s\le j\le -1$; which is furthermore called a {\em $(r,s)$-partition} if $\alpha_{1}\ge\alpha_2\ge\cdots\ge\alpha_r>0$ and $\alpha_{-1}\le\alpha_{-2}\le\cdots\le\alpha_{-s}<0$.
		Given a pair of partitions\[
		\beta=(\beta_1,\cdots,\beta_r) \quad\text{and}\quad \gamma=(\gamma_1,\cdots,\gamma_s),
		\] 
		we can get a $(r,s)$-partition $\la=(-\gamma_s,\cdots,-\gamma_1,\beta_1,\cdots,\beta_r)$. Conversely, any $(r,s)$-partition gives two partitions $\beta$ and $\gamma$ of length $r$ and $s$, respectively.
		
		For any $(r,s)$-composition $\alpha$, define 
		$$h_\alpha:= h^-_{-\alpha_{-s}}\cdots h^-_{-\alpha_{-1}}h^+_{\alpha_1}\cdots h^+_{\alpha_r}.$$
		Therefore,  we can attach a Schur-Laurent symmetric polynomials by \eqref{schur} to the $(r,s)$-composition $\alpha$; or equivalently, we have
		\[
		s_\alpha=\sum_{\omega\in \mathfrak{S}_{r+s}}\epsilon(\omega)h_{\alpha+\delta-\omega(\delta)}
		\]
		where $\epsilon(\omega)\in\{\pm 1\}$ is the sign of the permutation $\omega$, and $\delta$ is the composition  $(r+s-1,r+s-2,\cdots,1,0)$. 
		
		For any $(r,s)$-composition $\alpha$, define 
		\begin{align}
			H_{\alpha}=H_{(\alpha_1,\cdots,\alpha_r),(-\alpha_{-1},\cdots,-\alpha_{-s})}.
		\end{align}
		Define the generalized raising operator $R_{ij}$ to act on $(r,s)$-compositions $\alpha$ by
		\[
		R_{ij} (\alpha) =(\ldots, \alpha_i+1, \ldots, \alpha_j-1, \ldots).
		\]for $i,j\in\{-s,\cdots,-1,1,\cdots,r\}$ and $i<j$. Denote $R_{ij}h_{\alpha}=h_{R_{ij}\alpha}$. Then we can change the formula \eqref{t=0} to 
		\begin{align}
			H_\alpha|_{t=0}=\prod_{i<j}(1-R_{ij})h_\alpha,
		\end{align}
		for any $(r,s)$-partition $\alpha$. It is remarkable that $h_{a}^{\pm}=0$ for $a<0$. 
		
		It is enough to prove that $s_\alpha=H_\alpha|_{t=0}$ for any $(r,s)$-partition $\alpha$.
		
		We consider the ring $\mathbb{Q}[x_{-s}^{\pm1},\cdots,x_{-1}^{\pm1},x_{1}^{\pm1},\cdots,x_{r}^{\pm1}]$. For any integer sequence $\beta=(\beta_{-s},\cdots,\beta_{-1},\beta_1,\cdots,\beta_r)$, we denote by $x^\beta=x_{-s}^{\beta_{-s}}\cdots x_{-1}^{\beta_{-1}}x_{1}^{\beta_{1}}\cdots x_{r}^{\beta_{r}}$ and $R_{ij}x^\beta=x^{R_{ij}\beta}$ for $i<j$. Then the following holds for any $(r,s)$-partition $\alpha$:
		\begin{align}
			\label{eq:schur-transform}
			\sum_{\omega\in \mathfrak{S}_{r+s}}\epsilon(\omega) x^{\alpha+\delta-\omega(\delta)}&=x^{\alpha+\delta}\prod_{i<j}(x_i^{-1}-x_j^{-1})
			=x^\alpha\prod_{i<j}(1-x_ix_j^{-1})
			=\prod_{i<j}(1-R_{ij})x^\alpha.
		\end{align}
		
		It is obvious that $\{x^\beta\mid \beta=(\beta_{-s},\cdots,\beta_{-1},\beta_1,\cdots,\beta_r)\in\Z^{r+s}\}$ is a $\Q$-basis of the ring $\mathbb{Q}[x_{-s}^{\pm1},\cdots,x_{-1}^{\pm1},x_{1}^{\pm1},\cdots,x_{r}^{\pm1}]$. 
		Then there is a $\mathbb{Q}$-linear map $$\psi:\mathbb{Q}[x_{-s}^{\pm1},\cdots,x_{-1}^{\pm1},x_{1}^{\pm1},\cdots,x_{r}^{\pm1}]\rightarrow \mathcal{D}\La,\;x^\beta\mapsto h_\beta.$$ 
		Note that $h_\beta=0$ if $\beta$ is not a $(r,s)$-composition.
		Applying the linear map $\psi$ to \eqref{eq:schur-transform}, we obtain that
		\[
		\sum_{\omega\in \mathfrak{S}_{r+s}}\epsilon(\omega) h_{\alpha+\delta-\omega(\delta)}=\prod_{i<j}(1-R_{ij})h_\alpha.
		\]
		The proof is completed.
	\end{proof}
	
	%For any partitions $\la,\mu,\rho,\nu$, we denote
	%\begin{align}
	%N_{\rho,\nu,r}^{\la,\mu} &= \Big | \big \{\nu \in \cp ~  |~ \nu \rightarrow \mu, \mu \rightarrow \la, \text{ and } |\mu-\nu| +|\la-\nu| =r \} \Big |.
	%\label{eq:N}
	%\end{align}
	%Denote by $N_{\mu,r}^\la = |\cj_{\mu,r}^\la|.$
	%the number of partitions $\nu$ such that $\mu-\nu$ is a horizontal $a$-strip and $\la-\nu$ is a horizontal $(r-a)$-strip, where $a= (|\mu|- |\la|+r)/2$.
	%Note that $N_{\mu,r}^\la =0$ unless $ -r \le |\la| -|\mu| \le r$ and $|\la| -|\mu| \equiv r \pmod 2$.
	%
	The following Pieri rule for Schur-Laurent functions is an immediate corollary by setting $t=0$ in Theorem~\ref{thm:Pieri-Ho}. (It can also be proved directly as for Theorem~\ref{thm:Pieri-Ho}, using a well-known and simpler mirror identities for $s_\la$, cf., e.g., \cite[(8)]{Tam12}.)
	
	\begin{corollary} 
		\label{cor:Pieri-Schur}
		For $r\ge 1$ and any partitions $\rho,\nu$, we have
		%\begin{align}
		%\iV_\mu  \cdot \vv_r
		%&= \sum_{\la \in \mathcal P} \sum_{\nu \in \cj^\la_{\mu,r}} \vth^{|\mu/\nu|}\, \varphi_{\mu/\nu}(t) \psi_{\la/\nu}(t) \;  \iV_\la.
		% \label{Pieri:hSF3}
		%\end{align}
		\begin{align}
			s_{\rho,\nu}  \cdot s^+_r
			&=\sum_{a+b=r} \sum_{\mu \stackrel{a}{\rightarrow} \nu ,\rho\stackrel{b}{\rightarrow} \la}   s_{\la,\mu},
			\label{Pieri:schur3}
		\\
			s_{\rho,\nu}  \cdot s^-_r
			&=\sum_{a+b=r} \sum_{\nu \stackrel{a}{\rightarrow} \mu ,\la\stackrel{b}{\rightarrow} \rho} \,  s_{\la,\mu}.
			%\label{Pieri:hSF32}
		\end{align}
	\end{corollary}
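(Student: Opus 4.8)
The plan is to derive Corollary~\ref{cor:Pieri-Schur} by specializing the identities \eqref{Pieri:hSF3} and \eqref{Pieri:hSF32} of Theorem~\ref{thm:Pieri-Ho} at $t=0$; essentially the only thing needing verification is that this specialization is legitimate. So I would first check that, after identifying $\vv_r^\pm = Q_r^\pm$ (so that $V_{\la,\mu} = Q_{\la,\mu}$), the identity \eqref{Pieri:hSF3} takes place inside the subring $\Q[t]\otimes_\Q \cd\La = \Q[t][h_1^\pm, h_2^\pm, \dots]$, i.e. everything occurring in it depends polynomially on $t$. Indeed, the coefficients of the operators $\L_{ij}, \R_{ij}^\pm$ are polynomial in $t$ because $(1-t^{-1})t^p = t^p - t^{p-1} \in \Z[t]$ for $p \ge 1$, so $Q_{\la,\mu} = \L\cdot\R^+\cdot\R^-(\vv_{\la,\mu})$ is a $\Z[t]$-combination of the monomials $\vv_{\alpha,\beta} = Q_\alpha^+ Q_\beta^-$; the generators $Q_r^\pm$ are polynomial in $t$ by the classical identity $Q^\pm(u) = H^\pm(u)H^\pm(tu)^{-1}$; and the structure constants $\varphi_{\nu/\mu}(t), \psi_{\la/\rho}(t)$ are polynomials in $t$ directly from \eqref{eq:phipsi}. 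Hence the evaluation $\Q$-algebra homomorphism $\mathrm{ev}_0 \colon \Q[t][h_1^\pm, h_2^\pm, \dots] \to \cd\La$, $t \mapsto 0$, may be applied to both sides of \eqref{Pieri:hSF3} and of \eqref{Pieri:hSF32}.

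Next I would record the images of the three kinds of ingredient under $\mathrm{ev}_0$. By the Proposition immediately preceding this corollary, $\mathrm{ev}_0(Q_{\la,\mu}) = Q_{\la,\mu}|_{t=0} = s_{\la,\mu}$ for all partitions $\la,\mu$; in particular $\mathrm{ev}_0(Q_r^+) = s_{(r),\emptyset} = h_r^+ = s_r^+$ and $\mathrm{ev}_0(Q_r^-) = s_r^-$, consistently with $Q^\pm(u)|_{t=0} = H^\pm(u)$. For the coefficients, every factor of $\varphi_{\nu/\mu}(t) = \prod_{i \in I_{\nu-\mu}}(1 - t^{m_i(\nu)})$ has exponent $m_i(\nu) = \nu'_i - \nu'_{i+1} \ge 1$, since $i \in I_{\nu-\mu}$ forces $\nu'_i = \mu'_i + 1 > \mu'_{i+1} = \nu'_{i+1}$; therefore $\varphi_{\nu/\mu}(t)|_{t=0} = 1$, and the identical computation gives $\psi_{\la/\rho}(t)|_{t=0} = 1$. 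Substituting these values into \eqref{Pieri:hSF3} collapses it to exactly \eqref{Pieri:schur3}, and substituting them into \eqref{Pieri:hSF32} yields the second identity of the corollary.

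I do not anticipate a genuine obstacle: all the content sits in the routine polynomiality-in-$t$ check of the first paragraph, which certifies that no pole at $t=0$ is created, so I would keep that step brief. If one would rather not pass through Theorem~\ref{thm:Pieri-Ho} at all, an alternative is to reprove Corollary~\ref{cor:Pieri-Schur} directly by repeating the proof of Theorem~\ref{thm:Pieri-Ho} verbatim with $t=0$ throughout: the raising and lowering operator manipulations are unchanged, all the factors $\varphi$ and $\psi$ become $1$, and Mirror identities I and II get replaced by the classical (and simpler) Schur-function mirror identity $\sum_{\gamma \in \N^{\ell+1},\, |\gamma| = b} s_{\rho+\gamma,\mu} = \sum_{\rho \stackrel{b}{\rightarrow} \la} s_{\la,\mu}$ of \cite[(8)]{Tam12} and its lowering-operator counterpart.
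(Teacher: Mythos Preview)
Your proposal is correct and follows exactly the paper's approach: the paper simply declares the corollary ``immediate by setting $t=0$ in Theorem~\ref{thm:Pieri-Ho}'' and then, as you also do, remarks on the alternative direct proof via the simpler Schur mirror identity \cite[(8)]{Tam12}. Your added verification that $\varphi_{\nu/\mu}(0)=\psi_{\la/\rho}(0)=1$ and that everything lies in $\Q[t][h_1^\pm,h_2^\pm,\dots]$ is more careful than what the paper writes out, but is precisely what makes the specialization ``immediate.''
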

	
	The case for $r=1$ of \eqref{Pieri:schur3} can also be deduced from \cite[Theorem 6.1]{SV15}, the Pieri rule for Jack-Laurent symmetric functions, by setting $k=-1$ there.

	\section{Derived Hall algebra of the Jordan quiver}
	\label{sec:Jordan}
	
	For any (essentially small) abelian category $\cb$, we denote by $\Iso(\cb)$ the set of isomorphism classes $[M]$ of objects $M\in\cb$. For a set $S$, we denote by $|S|$ its cardinality. Let $\bfk=\mathbb F_q$ be a finite field of $q$ elements. 
	In this section, we denote by $\QJ$ the Jordan quiver,  i.e., the quiver with a single vertex $1$ and a single loop $\alpha:1\rightarrow 1$.
	Let $\ca={\mathrm {rep}}^\mathrm{nil}_\bfk(\QJ)$ be the category formed by finite-dimensional nilpotent $\bfk$-linear representations. It is well known that the Euler form of $\ca$ is trivial.
	
	The derived Hall algebra of ${\mathrm {rep}}^\mathrm{nil}_\bfk(\QJ)$ is given in \cite{CLR25}. We shall formulate the basic properties including two Pieri rules for the derived Hall algebra of the Jordan quiver. We also establish an algebra isomorphism from the derived Hall algebra to the ring of double symmetric functions.
	
	\subsection{Root category}
	
	It is well known that $\ca=\rep_\bfk^{\rm nil}(\QJ)$ is a uniserial category. Let $S$ be the simple object in $\ca$.
	Then any indecomposable object of $\rep_\bfk^{\rm nil}(\QJ)$  (up to isomorphisms) is of the form $S^{(n)}$ of length $n\geq1$.
	Thus the set of isomorphism classes $\ca$ is canonically isomorphic to the set $\cp$ of all partitions, via the assignment
	\begin{align}
		\la =(\la_1,\la_2,\cdots,\la_r)\mapsto S^{(\la)}= S^{(\la_1)}\oplus \cdots \oplus S^{(\la_r)}.
	\end{align}

	Let $\cd^b(\ca)$ be the bounded derived category of $\ca$ with $[1]$ the shift functor. It is well known that $\cd^b(\ca)$
	is a triangulated category. Let $F$ be an automorphism of $\cd^b(\ca)$. The orbit category $\cd^b(\ca)/F$ has the same objects as $\cd^b(\ca)$ and 
	\begin{align*}
		\Hom_{\cd^b(\ca)/F}(M^\bullet,N^\bullet)=\bigoplus_{i\in\Z}\Hom_{\cd^b(\ca)}(M^\bullet,F^iN^\bullet).
	\end{align*}
	For any $m\ge1$, the orbit category $\cd_m(\ca):=\cd^b(\ca)/[m]$ is called the $m$-periodic derived category of $\ca$, and 
	it is a triangulated category such that the natural projection $\cd^b(\ca)\rightarrow \cd_m(\ca)$ is a triangulated functor; see \cite{PX97,Ke05}. We also denote the shift functor of $\cd_m(\ca)$ by $[1]$. 
	For $m=2$, we also denote $\mathcal{R}(\ca):=\cd_2(\ca)=\cd^b(\ca)/[2]$, which is also called the root category. 
	
	Any object of $\ca$ can be viewed as a stalk complex concentrated at degree zero. This induces a full embedding $\ca\rightarrow \cd^b(\ca)$, and then a full embedding $\ca\rightarrow \cR(\ca)$. We always view $\ca$ as a full subcategory of $\cR(\ca)$ in this way. 
	
	For any object $M^\bullet\in\cR(\ca)$, it is isomorphic to $M_0\oplus M_1[1]$ for some (unique up to isomorphism) $M_0,M_1\in\ca$; see \cite{PX97}. In this way, we call $M_i$ the $i$-th homology group of $M^\bullet$, and denoted by $H^i(M^\bullet)$ for $i=0,1$.

	\subsection{Derived Hall algebra}
	For two objects $M^\bullet$ and $N^\bullet$ in $\cR(\ca)$, denote by $\Aut(M^\bullet)$ the automorphism group of $M^\bullet$, and 
	\begin{align}
		\{M^\bullet,N^\bullet\}:=|\Hom_{\ca}(H^0(M^\bullet),H^0(N^\bullet))|\cdot|\Ext^1_{\ca}(H^1(M^\bullet),H^1(N^\bullet))|.
	\end{align}
	If $M^\bullet=M_0\oplus M_1[1]$ and $N^\bullet=N_0\oplus N_1[1]$ for $M_0,M_1,N_0,N_1\in \ca \subset \mathcal{R}(\ca)$, then 
	\begin{align}
		\label{eq:bilinear-stalk}
		\{M^\bullet,N^\bullet\}=|\Hom_{\ca}(M_0,N_0)||\Ext^1_{\ca}(M_1,N_1)|{=|\Hom_{\ca}(M_0,N_0)||\Hom_{\ca}(M_1,N_1)|}.
	\end{align}
	
	For any $M^\bullet$, $N^\bullet$, $L^\bullet$ in $\cR(\ca)$, define
	\begin{align*}
		\Hom(M^\bullet,N^\bullet)_{L^\bullet}&=
		\{l:M^\bullet\rightarrow N^\bullet\mid \cone(l)\cong L^\bullet\}.
	\end{align*}
	%Let $  \cv(Z^\bullet,L^\bullet;M^\bullet)$ be the orbit space under the natural action of $\Aut(Z^\bullet)\times \Aut(L^\bullet)$. 
	and then the derived Hall number to be 
	\begin{align}
		\label{eq:derhallnum}
		G_{M^\bullet N^\bullet}^{L^\bullet}:=\dfrac{|\Hom(M^\bullet,N^\bullet[1])_{L^\bullet[1]}|}{\{M^\bullet,N^\bullet\}}.
	\end{align}
	From \cite[\S5]{CLR25}, the (Drinfeld dual) derived Hall algebra $\cd\widetilde{\ch}(\bfk \QJ)$ is an associative algebra with $\Q$-basis $\{{[M^\bullet]}\mid M^\bullet\in\cR(\ca)\}$ with the multiplication defined by
	\begin{align}
		{[M^\bullet]}*{[N^\bullet]}=\sum_{[L^\bullet]}G_{M^\bullet N^\bullet}^{L^\bullet} \;{[L^\bullet]},
	\end{align}
	and the identity ${[0]}$.
	
	Given objects $M,N,L\in\ca$, let $\Ext^1(M,N)_L\subseteq \Ext^1(M,N)$ be the subset parameterizing extensions whose middle term is isomorphic to $L$. The {\em Hall algebra} (or {\em Ringel-Hall algebra}) $\widetilde{\ch}(\bfk\QJ)$ is defined to be the $\Q$-vector space with the isoclasses $[M]$ of objects $M \in \ca$ as a basis and multiplication given by (cf., e.g., \cite{Br13})
	\begin{align}
		\label{eq:mult}
		[M]* [N]=\sum_{[L]\in \Iso(\ca)}G_{MN}^{L}\cdot[L],
	\end{align}
	where $G_{MN}^{L}=\dfrac{|\Ext^1(M,N)_L|}{|\Hom(M,N)|}$.

	Given three objects $M,N,L$, the Hall number is defined to be
	$$F_{MN}^L:= |\{X\subseteq L\mid X \cong N\text{ and }L/X\cong M\}|.$$
	Denote by $\aut(M)$ the automorphism group of $M$. The Riedtmann-Peng formula reads
	\begin{align}
		\label{eq:RP}
		{F_{MN}^L}= \frac{|\Ext^1(M,N)_L|}{|\Hom(M,N)|} \cdot \frac{|\aut(L)|}{|\aut(M)|\cdot |\aut(N)|}.
	\end{align}
	It is well known that $\widetilde{\ch}(\bfk\QJ)$ is commutative. 
	
	Let $\widetilde{\ch}(\bfk\QJ)\otimes\widetilde{\ch}(\bfk\QJ)$ be the natural tensor algebra. Then we have the following.
	
	\begin{lemma}[\text{\cite[Propositions 5.1,5.2]{CLR25}}]
		\label{lem:commDHA}
		The derived Hall algebra $\cd\widetilde{\ch}(\bfk \QJ)$ is commutative.
		Furthermore, we have an algebra isomorphism $\th(\bfk\QJ)\otimes\th(\bfk\QJ)\cong \cd\th(\bfk\QJ)$.
	\end{lemma}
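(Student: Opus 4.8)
The plan is to exhibit $\cd\th(\bfk\QJ)$ as generated by two commuting copies of the ordinary Hall algebra $\th(\bfk\QJ)$; both assertions of the lemma then drop out. Throughout I would use the standing facts recorded above about $\ca=\rep^{\rm nil}_\bfk(\QJ)$: it is hereditary (so $\Hom_{\cd^b(\ca)}(X,Y[i])=0$ unless $i\in\{0,1\}$), its Euler form is trivial (so $|\Ext^1_\ca(X,Y)|=|\Hom_\ca(X,Y)|$), $\widetilde{\ch}(\bfk\QJ)$ is commutative, and $\mathcal{R}(\ca)=\cd^b(\ca)/[2]$ is triangulated by \cite{PX97}, with $Y[2]\cong Y$ canonically. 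Let $\th^{+}$ be the $\Q$-span of $\{[M]\mid M\in\ca\}$ and $\th^{-}$ the $\Q$-span of $\{[N[1]]\mid N\in\ca\}$ inside $\cd\th(\bfk\QJ)$.

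First I would compute the derived Hall numbers \eqref{eq:derhallnum} when all arguments are stalk complexes. For $M,M',L\in\ca$ one finds $\Hom_{\mathcal{R}(\ca)}(M,M'[1])=\Ext^1_\ca(M,M')$ and $\{M,M'\}=|\Hom_\ca(M,M')|$, while the cone of an extension class is the shift of its middle term; hence $G^{L}_{M,M'}$ reduces to the classical Hall number of \eqref{eq:mult}, so $\th^{+}$ is closed under $*$ and $[M]\mapsto[M]$ identifies it with $\th(\bfk\QJ)$. The same computation for shifted stalks gives $[N[1]]*[N'[1]]=\sum_{[E]}\frac{|\Ext^1_\ca(N,N')_E|}{|\Ext^1_\ca(N,N')|}\,[E[1]]$, and here the triviality of the Euler form, $|\Ext^1_\ca(N,N')|=|\Hom_\ca(N,N')|$, shows that $[N[1]]\mapsto[N]$ is an algebra isomorphism $\th^{-}\xrightarrow{\ \sim\ }\th(\bfk\QJ)$.

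The crux is that $\th^{+}$ and $\th^{-}$ commute. Computing $[M]*[N[1]]$ from \eqref{eq:derhallnum}: here $(N[1])[1]=N[2]\cong N$ in $\mathcal{R}(\ca)$, $\{M,N[1]\}=1$, and $\Hom_{\mathcal{R}(\ca)}(M,N[2])=\Hom_\ca(M,N)$; for $l\in\Hom_\ca(M,N)$ the object $\cone(l)$, a complex with cohomology $\ker l$ in degree $-1$ and $\coker l$ in degree $0$, splits by heredity as $(\ker l)[1]\oplus\coker l$, so the basis element $L^\bullet$ with $L^\bullet[1]\cong\cone(l)$ equals $\ker l\oplus(\coker l)[1]$. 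Thus
\[
[M]*[N[1]]=\sum_{A,B}\big|\{\,l\in\Hom_\ca(M,N)\mid \ker l\cong A,\ \coker l\cong B\,\}\big|\;[A\oplus B[1]],
\]
and symmetrically $[N[1]]*[M]=\sum_{A,B}\big|\{\,l'\in\Hom_\ca(N,M)\mid \ker l'\cong B,\ \coker l'\cong A\,\}\big|\;[A\oplus B[1]]$. These two sums coincide because every object of $\ca$ is self-dual under $D=\Hom_\bfk(-,\bfk)$ (already $D(S^{(n)})\cong S^{(n)}$, hence $D(S^{(\la)})\cong S^{(\la)}$): the bijection $l\mapsto Dl$ carries $\Hom_\ca(M,N)$ onto $\Hom_\ca(N,M)$, via the identifications $DM\cong M$ and $DN\cong N$, and interchanges $\ker$ with $\coker$. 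Hence $[M]*[N[1]]=[N[1]]*[M]$.

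Finally, the $l=0$ term above contributes $[M\oplus N[1]]$ with coefficient $1$ and every other term has strictly smaller total dimension, so the multiplication map $\th^{+}\otimes\th^{-}\to\cd\th(\bfk\QJ)$ is a $\Q$-linear bijection by unitriangularity; it is moreover an algebra homomorphism precisely because $\th^{+}$ and $\th^{-}$ commute. Composing with the isomorphisms $\th^{+}\cong\th(\bfk\QJ)$ and $\th^{-}\cong\th(\bfk\QJ)$ found above yields $\th(\bfk\QJ)\otimes\th(\bfk\QJ)\cong\cd\th(\bfk\QJ)$, and commutativity of $\cd\th(\bfk\QJ)$ is then immediate from that of $\th(\bfk\QJ)$. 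I expect the main obstacle to be the orbit-category bookkeeping—correctly identifying the $\Hom$-spaces and decomposing cones of complexes supported in two adjacent degrees under the $[2]$-periodicity of $\mathcal{R}(\ca)$—rather than the combinatorics, which is light. A more structural alternative would be to quote the identification of $\cd\th(\ca)$ with the Drinfeld double of $\th(\ca)$ and to observe that, since $\th(\bfk\QJ)$ is a commutative Hopf algebra, the coadjoint action defining the cross relations of the double is trivial, so the double collapses to $\th(\bfk\QJ)\otimes\th(\bfk\QJ)^{\ast}\cong\th(\bfk\QJ)^{\otimes 2}$ by self-duality.
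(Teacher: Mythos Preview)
Your argument is correct. The paper itself does not prove this lemma but cites \cite{CLR25}, where (as summarized in the introduction) the result is obtained by first establishing, for \emph{any} finitary hereditary $\ca$, an isomorphism between the derived Hall algebra of the root category and the Drinfeld double of $\th(\ca)$; for the Jordan quiver the Euler form vanishes, so the cross relations in the double trivialize and one is left with the tensor square---this is precisely the structural alternative you sketch in your last sentence. Your primary argument takes a genuinely different and more elementary route: you compute the derived Hall products directly in $\cR(\ca)$ and exploit a feature special to the Jordan quiver, namely that the $\bfk$-linear duality $D=\Hom_\bfk(-,\bfk)$ fixes every object, to produce a bijection $\Hom_\ca(M,N)\to\Hom_\ca(N,M)$ swapping kernels and cokernels, whence $[M]*[N[1]]=[N[1]]*[M]$. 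The cone computations are correct (the key point being that a morphism $M\to N[2]$ in $\cR(\ca)$ is the same datum as $l\colon M\to N$ in $\cd^b(\ca)$, so its cone is the image of the two-term complex, which heredity splits as $(\ker l)[1]\oplus\coker l$), and the unitriangularity step is sound since for $l\neq 0$ the total dimension $\dim\ker l+\dim\coker l$ drops by $2\dim\Im l$. What your approach buys is a self-contained proof avoiding the Drinfeld double machinery; what the cited approach buys is that it works for any hereditary $\ca$ with trivial Euler form, without needing objectwise self-duality.
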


	\begin{proposition}
		\label{generator}
		The derived Hall algebra $\cd\widetilde{\ch}(\bfk \QJ)$ is isomorphic to 
		\begin{enumerate}
			\item
			the polynomial $\Q$-algebra in the infinitely many generators
			$[S^{(1^r)}], [S^{(1^r)}[1]]$  for $r\ge 1$;
			\item
			the polynomial $\Q$-algebra in the infinitely many generators
			$ [S^{(r)}], [S^{(r)}[1]]$ for $r\ge 1$.
		\end{enumerate}
	\end{proposition}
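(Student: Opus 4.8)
The plan is to deduce this from the classical structure of the ordinary Hall algebra together with Lemma~\ref{lem:commDHA}. First I would recall the Steinitz--Hall theorem (see \cite[Ch.~III, \S2--3]{Mac95}): there is a $\Q$-algebra isomorphism $\th(\bfk\QJ)\cong\Lambda_{q^{-1}}$ under which, for every partition $\lambda$, the class $[S^{(\lambda)}]$ is sent to a nonzero scalar multiple of the Hall--Littlewood function $P_\lambda(x;q^{-1})$. In particular $[S^{(1^r)}]$ maps to a nonzero multiple of $P_{(1^r)}=e_r$, and $[S^{(r)}]$ maps to a nonzero multiple of $P_{(r)}$. Since $\{e_r\mid r\ge1\}$ is the standard free polynomial generating set of $\Lambda_{q^{-1}}$ over $\Q$, the first assertion is immediate at the level of the ordinary Hall algebra. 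For the second, I would observe that $\{q_r=Q_{(r)}\mid r\ge 1\}$ is also a free polynomial generating set of $\Lambda_{q^{-1}}$: from $\log Q(u)=\sum_{r\ge1}\frac{1-q^{-r}}{r}p_r u^r$ one gets $q_r=\frac{1-q^{-r}}{r}p_r+(\text{a polynomial in }p_1,\dots,p_{r-1})$, a triangular change of generators from the power sums $p_r$ with diagonal entries $\frac{1-q^{-r}}{r}\in\Q^{\times}$, hence invertible over $\Q$; and $P_{(r)}$ differs from $q_r$ only by the invertible scalar $b_{(r)}(q^{-1})=1-q^{-1}$. Thus $\th(\bfk\QJ)$ is the polynomial $\Q$-algebra on $\{[S^{(1^r)}]\mid r\ge1\}$, and also the polynomial $\Q$-algebra on $\{[S^{(r)}]\mid r\ge1\}$.

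Next I would invoke Lemma~\ref{lem:commDHA} (i.e.\ \cite[Props.~5.1--5.2]{CLR25}): $\cd\th(\bfk\QJ)$ is commutative, and the two maps sending $[M]\mapsto[M]$ (with $M\in\ca$ regarded as a stalk complex concentrated in degree $0$) and $[M]\mapsto[M[1]]$ are commuting $\Q$-algebra embeddings $\th(\bfk\QJ)\hookrightarrow\cd\th(\bfk\QJ)$ whose product map realizes the isomorphism $\th(\bfk\QJ)\otimes\th(\bfk\QJ)\cong\cd\th(\bfk\QJ)$. Under this isomorphism, the two copies of $\{[S^{(1^r)}]\mid r\ge1\}$ sitting in the two tensor factors correspond precisely to $\{[S^{(1^r)}]\mid r\ge1\}\cup\{[S^{(1^r)}[1]]\mid r\ge1\}$ in $\cd\th(\bfk\QJ)$, and likewise $\{[S^{(r)}]\mid r\ge1\}$ in the two factors corresponds to $\{[S^{(r)}]\mid r\ge1\}\cup\{[S^{(r)}[1]]\mid r\ge1\}$. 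Because a tensor product over $\Q$ of two polynomial $\Q$-algebras is the polynomial $\Q$-algebra on the disjoint union of the two sets of variables, both claims follow from the previous paragraph.

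I do not expect a genuine obstacle here; the argument is mostly an assembly of known statements, so the only care required is bookkeeping. The two small verifications are: (i) that the normalization in the Steinitz--Hall isomorphism really carries $[S^{(r)}]$ and $[S^{(1^r)}]$ to nonzero multiples of free polynomial generators of $\Lambda_{q^{-1}}$, which is explicit in \cite[Ch.~III]{Mac95}; and (ii) that the two embeddings of Lemma~\ref{lem:commDHA} send the tautological generators of the tensor factors to the stalk, respectively shifted-stalk, classes --- this is immediate from the construction in \cite{CLR25}, where the derived Hall multiplication restricted to stalk complexes recovers the ordinary Hall multiplication, so that $[S^{(\cdot)}]\otimes 1$ and $1\otimes[S^{(\cdot)}]$ are exactly the stalk and shifted-stalk classes.
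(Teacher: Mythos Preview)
Your proposal is correct and follows essentially the same route as the paper: invoke the classical fact that $\th(\bfk\QJ)$ is a polynomial $\Q$-algebra on either $\{[S^{(1^r)}]\}$ or $\{[S^{(r)}]\}$, then use Lemma~\ref{lem:commDHA} to pass to the tensor square. The paper simply cites \cite[II, (2.3)]{Mac95} for the classical input and dismisses part~(2) with ``can be proved in the same way,'' whereas you spell out the triangularity argument showing $\{q_r\}$ is a free generating set; this extra detail is fine but not a different strategy.
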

	
	\begin{proof}
		We only prove the first statement, and the second one can be proved in the same way.
		
		From \cite[Chapter II, (2.3)]{Mac95}, $\th(\bfk \QJ)$ is isomorphic to the polynomial algebra in the infinitely many generators $[S^{(1^r)}],$ for $r\ge 1$. Then the desired result follows from Lemma \ref{lem:commDHA}.
	\end{proof}
	
	We shall provide an applicable formula for derived Hall numbers by using Hall numbers. To do this, we need to make use of the category of $2$-periodic complexes.
	
	A $2$-periodic complex over $\ca$ is $$(\xymatrix{ M^0 \ar@<0.5ex>[r]^{d^0}& M^1 \ar@<0.5ex>[l]^{d^1}}),\quad d^1d^0=d^0d^1=0.$$
	Let $\cc_2(\ca)$ be the category of $2$-periodic complexes of $\ca$. It is well known that %the following holds for $X^\bullet,Y^\bullet,Z^\bullet\in $
	$$|\Hom_{\mathcal{R}(\ca)}(M^\bullet,N^\bullet[1])_{L^\bullet[1]}|= \sum_{\substack{[X^\bullet]\in\Iso(\cc_2(\ca)),\\
			H^i(X^\bullet)\cong L_i}}
	|\Ext_{\mathcal{C}_2(\ca)}(M^\bullet,N^\bullet)_{X^\bullet}|.$$ 
	
	Recall the semi-derived Ringel-Hall algebra $\cs\cd\ch(\ca)$ of $\cc_2(\ca)$ defined in \cite{LP21}.  Let $\ch(\cc_2(\ca))$ be the Hall algebra of $\cc_2(\ca)$, and $I$ be its ideal generated by all $[M^\bullet]-[N^\bullet\oplus K^\bullet]$ if there is a short exact sequence $0\rightarrow K^\bullet\rightarrow M^\bullet\rightarrow N^\bullet\rightarrow0$, with $K^\bullet$ acyclic. Then $\cs\cd\ch(\ca)$ is the localization of the quotient algebra $\ch(\cc_2(\ca))/I$ such that $[K^\bullet]$ are invertible for any acyclic complexes $K^\bullet$. 
	
	For any object $M\in\ca$, we define the following objects in $\cc_{\Z_2}(\ca)$:
	\begin{align}
		\label{stalks}
		\begin{split}
			K_M:=&(\xymatrix{ M \ar@<0.5ex>[r]^{1}& M \ar@<0.5ex>[l]^{0}  }),\qquad \,\, K_M^*:=(\xymatrix{ M \ar@<0.5ex>[r]^{0}& M \ar@<0.5ex>[l]^{1}  }),
			\\
			C_M:=&(\xymatrix{ 0 \ar@<0.5ex>[r]& M \ar@<0.5ex>[l]  }),\qquad \quad C_M^*:=(\xymatrix{ M\ar@<0.5ex>[r]& 0 \ar@<0.5ex>[l]  }).
		\end{split}
	\end{align}

	Let $K_0(\ca)$ be the Grothendieck group of $\ca$. Denote by $\widehat{M}$ the class of $M\in\ca$ in $K_0(\ca)$. For any $\alpha\in\ca$, we denote $[K_\alpha]:=[K_M]*[K_N]^{-1}$ if $\alpha=\widehat{M}-\widehat{N}$ in $\cs\cd\ch(\ca)$; $[K_\alpha^*]$ is defined similarly. By definition, we know 
	\begin{align}\label{sdh1}
		[M^\bullet]*[N^\bullet]=\sum_{\substack{[L_0],[L_1]\in\Iso(\ca),\\\alpha,\beta\in K_0(\ca)}}&\sum_{\substack{[X^\bullet]\in\Iso(\cc_2(\ca)),\\
				H^i(X^\bullet)\cong L_i,\widehat{\Im(d^0)}=\alpha,\widehat{\Im(d^1)}=\beta}}
		\\\notag
		&\frac{|\Ext_{\mathcal{C}_2(\ca)}(M^\bullet,N^\bullet)_{X^\bullet}|}{|\Hom(M^\bullet,N^\bullet)|} [C_{L_0}\oplus C_{L_1}^*]*[K_{\alpha}]*[K^*_{\beta}].
	\end{align}
	Here $d^0,d^1$ are the differentials of $N^\bullet$. 
	
	From \cite[Proposition 3.3,Example 2.2]{LR24}, by noting the Euler form is trivial, we know
	\begin{align}\label{sdh2}
		[M^\bullet]*[N^\bullet]=&\sum_{\substack{[L_0],[L_1]\in\Iso(\ca),\\\alpha,\beta\in K_0(\ca)}}\sum_{[B_0],[B_1],[C_0],[C_1]}\sum_{\substack{[A_0],[A_1],\\ \widehat{A_0}=\alpha,\widehat{A_1}=\beta}}
		\\\notag&\times\dfrac{|\aut(A_0)||\aut(A_1)||\aut(B_0)||\aut(B_1)||\aut(C_0)||\aut(C_1)|}{|\aut(L_0)||\aut(L_1)|}
		\\\notag
		&\times{F_{A_0B_0}^{M_0}F_{B_0C_0}^{L_0}F_{C_0A_1}^{N_0}F_{A_1B_1}^{M_1}F_{B_1C_1}^{L_1}F_{C_1A_0}^{N_1}}
		[C_{L_0}\oplus C_{L_1}^*]*[K_{\alpha}]*[K^*_{\beta}].
	\end{align}
	
	\begin{proposition}
		\label{prop:dHallvsHall}
		For any $M^\bullet=M_0\oplus M_1[1],N^\bullet=N_0\oplus N_1[1],L^\bullet=L_0\oplus L_1[1]\in \cR(\ca)$ with $M_i,N_i,L_i\in\ca$ for $i=0,1$, we have 
		% \begin{align}
		% 		G_{L^\bullet Z^\bullet}^{M^\bullet}
		% =\sum_{\substack{A_0,A_1,B_0,B_1,\\C_0,C_1\in \mathrm{Iso}(\ca)}}&F_{L_0M_0}^{A_0}F_{M_0Z_0}^{B_0}F_{Z_0L_1}^{C_0}F_{L_1M_1}^{A_1}F_{M_1Z_1}^{B_1}F_{Z_1L_0}^{C_1}
		% 		\\\notag&\cdot\dfrac{|\aut(L_0)||\aut(M_0)||\aut(Z_0)||\aut(L_1)||\aut(M_1)||\aut(Z_1)|}{|\aut(M_0)||\aut(M_1)|}.
		% 	\end{align}
		{
			\begin{align}
				\label{six-hall}
				G_{M^\bullet N^\bullet}^{L^\bullet}
				=&\sum_{\substack{[A_0],[A_1],[B_0],[B_1],\\ [C_0],[C_1]\in \mathrm{Iso}(\ca)}}{F_{A_0B_0}^{M_0}F_{B_0C_0}^{L_0}F_{C_0A_1}^{N_0}F_{A_1B_1}^{M_1}F_{B_1C_1}^{L_1}F_{C_1A_0}^{N_1}}
				\\\notag&\qquad\times\dfrac{|\aut(A_0)||\aut(A_1)||\aut(B_0)||\aut(B_1)||\aut(C_0)||\aut(C_1)|}{|\aut(L_0)||\aut(L_1)|}.
			\end{align}
		}
	\end{proposition}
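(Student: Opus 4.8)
The plan is to obtain \eqref{six-hall} by tracking $G_{M^\bullet N^\bullet}^{L^\bullet}$ through the semi-derived Ringel--Hall algebra $\cs\cd\ch(\ca)$ of $\cc_2(\ca)$, playing off the two parallel expansions \eqref{sdh1} and \eqref{sdh2} of the product $[M^\bullet]*[N^\bullet]$. First I would set up the numerology. Viewing $M^\bullet=M_0\oplus M_1[1]$ and $N^\bullet=N_0\oplus N_1[1]$ as $2$-periodic stalk complexes with zero differentials, one has $\Hom_{\cc_2(\ca)}(M^\bullet,N^\bullet)=\Hom_\ca(M_0,N_0)\times\Hom_\ca(M_1,N_1)$; since the Euler form of $\ca$ is trivial, $|\Ext^1_\ca(M_1,N_1)|=|\Hom_\ca(M_1,N_1)|$, so comparing with \eqref{eq:bilinear-stalk} gives the identity $|\Hom(M^\bullet,N^\bullet)|=\{M^\bullet,N^\bullet\}$. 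Combined with the standard fact recalled above,
\[
|\Hom_{\cR(\ca)}(M^\bullet,N^\bullet[1])_{L^\bullet[1]}|=\sum_{\substack{[X^\bullet]\in\Iso(\cc_2(\ca)),\, H^i(X^\bullet)\cong L_i}}|\Ext_{\cc_2(\ca)}(M^\bullet,N^\bullet)_{X^\bullet}|,
\]
the definition \eqref{eq:derhallnum} rewrites the target as $G_{M^\bullet N^\bullet}^{L^\bullet}=\sum_{[X^\bullet],\,H^i(X^\bullet)\cong L_i}|\Ext_{\cc_2(\ca)}(M^\bullet,N^\bullet)_{X^\bullet}|/|\Hom(M^\bullet,N^\bullet)|$.

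Next I would compare coefficients in \eqref{sdh1} and \eqref{sdh2}. The elements $[C_{L_0}\oplus C_{L_1}^*]*[K_\alpha]*[K^*_\beta]$, indexed by $[L_0],[L_1]\in\Iso(\ca)$ and $\alpha,\beta\in K_0(\ca)$, are linearly independent in $\cs\cd\ch(\ca)$ (they form a basis, by \cite{LP21}), and in the canonical decomposition of a $2$-periodic complex the $K$-type summands record $\Im d^0$ and $\Im d^1$; hence the bookkeeping constraints $\widehat{\Im(d^0)}=\alpha$, $\widehat{\Im(d^1)}=\beta$ in \eqref{sdh1} match the constraints $\widehat{A_0}=\alpha$, $\widehat{A_1}=\beta$ in \eqref{sdh2}. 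Equating the coefficient of $[C_{L_0}\oplus C_{L_1}^*]*[K_\alpha]*[K^*_\beta]$ in the two expansions and then summing over all $\alpha,\beta\in K_0(\ca)$ (equivalently, dropping the constraints on $[A_0],[A_1]$) would yield
\[
\sum_{\substack{[X^\bullet],\,H^i(X^\bullet)\cong L_i}}\frac{|\Ext_{\cc_2(\ca)}(M^\bullet,N^\bullet)_{X^\bullet}|}{|\Hom(M^\bullet,N^\bullet)|}=\sum_{[A_0],[A_1],[B_0],[B_1],[C_0],[C_1]}\frac{\prod_{i=0,1}|\aut(A_i)|\,|\aut(B_i)|\,|\aut(C_i)|}{|\aut(L_0)|\,|\aut(L_1)|}\cdot\Pi,
\]
where $\Pi:=F_{A_0B_0}^{M_0}F_{B_0C_0}^{L_0}F_{C_0A_1}^{N_0}F_{A_1B_1}^{M_1}F_{B_1C_1}^{L_1}F_{C_1A_0}^{N_1}$ is the product of Hall numbers occurring in \eqref{sdh2}. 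Feeding this into the reduction from the first paragraph gives exactly \eqref{six-hall}.

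The two identities used in the first paragraph are routine. I expect the delicate point to be the comparison step: one must make sure that \eqref{sdh1} and \eqref{sdh2} are expansions with respect to the \emph{same} linearly independent family, so that coefficients can be read off termwise, and that the parameters $\alpha$ and $\beta$ genuinely index the same data ($\widehat{\Im(d^0)}$ and $\widehat{\Im(d^1)}$) on both sides, so that summing over $\alpha,\beta$ is legitimate on each side. This rests on the structure theory of $\cs\cd\ch(\ca)$ from \cite{LP21,LR24} already invoked to derive \eqref{sdh1}--\eqref{sdh2}; once that is granted, the rest is a matter of matching constraints and summing, with no further computation needed.
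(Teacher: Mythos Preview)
Your proposal is correct and follows essentially the same route as the paper's own proof: compare the two expansions \eqref{sdh1} and \eqref{sdh2} in $\cs\cd\ch(\ca)$, sum over $\alpha,\beta$, and then divide by $|\Hom(M^\bullet,N^\bullet)|=\{M^\bullet,N^\bullet\}$ to recover $G_{M^\bullet N^\bullet}^{L^\bullet}$ via \eqref{eq:derhallnum}. If anything, you are more explicit than the paper about why the coefficient comparison is legitimate (linear independence of the $[C_{L_0}\oplus C_{L_1}^*]*[K_\alpha]*[K^*_\beta]$ in $\cs\cd\ch(\ca)$), whereas the paper simply states ``by comparing formulas \eqref{sdh1} and \eqref{sdh2}''.
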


	\begin{proof}
		
		%Consider the multiplication of $M^\bullet N^\bullet$ in $\cs\cd\ch(\ca)$.
		
		By comparing formulas \eqref{sdh1} and \eqref{sdh2}, we know
		\begin{align*}&|\Hom_{\mathcal{R}(\ca)}(M^\bullet,N^\bullet[1])_{L^\bullet[1]}|\\
			&= \sum_{\substack{[X^\bullet]\in\Iso(\cc_2(\ca)),\\
					H^i(X^\bullet)\cong L_i}}
			|\Ext_{\mathcal{C}_2(\ca)}(M^\bullet,N^\bullet)_{X^\bullet}|
			\\
			&=|\Hom(M^\bullet,N^\bullet)|\sum_{\substack{[A_0],[A_1],[B_0],[B_1],\\ [C_0],[C_1]\in \mathrm{Iso}(\ca)}}{F_{A_0B_0}^{M_0}F_{B_0C_0}^{L_0}F_{C_0A_1}^{N_0}F_{A_1B_1}^{M_1}F_{B_1C_1}^{L_1}F_{C_1A_0}^{N_1}}
			\\\notag&\qquad\times\dfrac{|\aut(A_0)||\aut(A_1)||\aut(B_0)||\aut(B_1)||\aut(C_0)||\aut(C_1)|}{|\aut(L_0)||\aut(L_1)|}.
		\end{align*}
		Then the desired formula follows since $|\Hom(M^\bullet,N^\bullet)|=\{M^\bullet,N^\bullet\}$; see \eqref{eq:bilinear-stalk}.
	\end{proof}

	%%%
	%%%
	
	\subsection{Generic derived Hall algebra}
	\label{subsec:g-derived}
	According to Hall and Steinitz (cf. \cite[III]{Mac95}), there exist unique polynomials $F_{\mu,\nu}^\lambda(T),G_{\mu,\nu}^\lambda(T)\in\Z[T]$ such that
	$$F_{S^{(\mu)},S^{(\nu)}}^{S^{(\lambda)}}=F_{\mu,\nu}^\lambda(q),\qquad G_{S^{(\mu)},S^{(\nu)}}^{S^{(\lambda)}}=G_{\mu,\nu}^\lambda(q).$$

	For any partition $\lambda=(\lambda_1,\lambda_2,\ldots)$, let $n(\la)=\sum_i(i-1)\la_i$.
	We denote 
	\begin{align}
		\label{eq:phim}
		\varphi_{m}(t)&=(1-t)(1-t^2)\cdots(1-t^m),\quad \forall m\in\N;
		\\
		\label{def:bla}
		b_\la(t) &= \prod_{i\geq1} \varphi_{m_i(\la)}(t),\quad \forall \lambda\in\cp.
	\end{align}
	%For any $m\in\N$, we denote $\varphi_{m}(t)=(1-t)(1-t^2)\cdots(1-t^m)$. 
	The following is well known (see, e.g., \cite[Lemma~ 2.8]{Sch06}): 
	\begin{align}
		\label{eq:aut}
		|\Aut(S^{(\lambda)})|=&\bq^{|\lambda|+2n(\lambda)}\prod_i(1-\bq^{-1})(1-\bq^{-2})\cdots (1-\bq^{-m_i(\lambda)})
		\\\notag
		=&\bq^{|\lambda|+2n(\lambda)}\prod_{i\geq1}\varphi_{m_i(\lambda)}(\bq^{-1})=\bq^{|\lambda|+2n(\lambda)}b_\lambda(\bq^{-1}).
	\end{align}
	
	\begin{lemma}
		\label{lem:Hallpoly}
		For partitions $\lambda,\tilde{\la},\mu,\tilde{\mu},\nu,\tilde{\nu}$, there exists a Laurent polynomial $\mathcal G_{\mu,\tilde{\mu},\nu,\tilde{\nu}}^{\lambda,\tilde{\la}}(T)\in \Z[T,T^{-1}]$ such that
		\begin{align}
			[S^{(\mu)}\oplus S^{(\tilde{\mu})}[1]]*[S^{(\nu)}\oplus S^{(\tilde{\nu})}[1]]=&%\sum_{\alpha\in K_0(\ca)}
			\sum_{\lambda,\tilde{\la}} \mathcal G_{\mu,\tilde{\mu},\nu,\tilde{\nu}}^{\lambda,\tilde{\la}}(q)[S^{(\lambda)}\oplus S^{(\tilde{\la})}]
		\end{align}
	\end{lemma}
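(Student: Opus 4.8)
The key point is that the derived Hall number $G_{M^\bullet N^\bullet}^{L^\bullet}$ has been expressed in Proposition~\ref{prop:dHallvsHall} as an explicit finite sum of products of ordinary Hall numbers $F_{\cdot\,\cdot}^{\cdot}$ in $\ca$, weighted by ratios of orders of automorphism groups. So the plan is to feed the known polynomiality of the ordinary Hall algebra of the Jordan quiver into formula \eqref{six-hall}. First I would recall from Hall--Steinitz (cf.\ \cite[III]{Mac95}) that for all partitions there are classical Hall polynomials $F_{\mu,\nu}^{\lambda}(T)\in\Z[T]$ with $F_{S^{(\mu)},S^{(\nu)}}^{S^{(\lambda)}}=F_{\mu,\nu}^{\lambda}(q)$, and that these vanish unless $|\mu|+|\nu|=|\lambda|$, so that for fixed outer partitions only finitely many triples contribute. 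Next I would invoke \eqref{eq:aut}, which gives $|\Aut(S^{(\lambda)})|=q^{|\lambda|+2n(\lambda)}b_\lambda(q^{-1})$; rewriting $b_\lambda(q^{-1})=\prod_i\varphi_{m_i(\lambda)}(q^{-1})$ shows each automorphism order is $q^{N}$ times a polynomial in $q^{-1}$, hence lies in $\Z[q,q^{-1}]$, and likewise its inverse times the appropriate power of $q$ lies in $\Z[q,q^{-1}]$ (the denominator $1-q^{-m}$ only ever divides into a factor coming from a numerator automorphism group, but we don't even need that refinement — Laurent-polynomiality in $T$ with $T=q$ suffices, allowing negative powers and the innocuous rational factors $(1-T^{-r})^{-1}$ which we can clear by the numerator automorphism factors).

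\textbf{Carrying it out.} Substituting these into \eqref{six-hall}, each summand becomes a product
\[
\frac{|\aut(A_0)||\aut(A_1)||\aut(B_0)||\aut(B_1)||\aut(C_0)||\aut(C_1)|}{|\aut(L_0)||\aut(L_1)|}\cdot F_{A_0B_0}^{M_0}F_{B_0C_0}^{L_0}F_{C_0A_1}^{N_0}F_{A_1B_1}^{M_1}F_{B_1C_1}^{L_1}F_{C_1A_0}^{N_1},
\]
where the six Hall numbers are values at $T=q$ of classical Hall polynomials. The constraints $F_{A_0B_0}^{M_0}\neq0$, $F_{C_0A_1}^{N_0}\neq0$, etc., force $\widehat{A_0}+\widehat{B_0}=\widehat{M_0}$, $\widehat{B_0}+\widehat{C_0}=\widehat{L_0}$, $\widehat{C_0}+\widehat{A_1}=\widehat{N_0}$ and the analogous relations in degree $1$; adding these over the cycle $A_0\to B_0\to C_0\to A_1\to B_1\to C_1\to A_0$ one checks that $\widehat{L_0}+\widehat{L_1}$ is determined by $\widehat{M_i},\widehat{N_i}$, and that for fixed $M_i,N_i,L_i$ only finitely many isoclasses $A_j,B_j,C_j$ can occur (all have bounded length). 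Hence the sum is finite and, by the above, each term is a Laurent polynomial in $T$ evaluated at $q$; define $\mathcal G_{\mu,\tilde\mu,\nu,\tilde\nu}^{\lambda,\tilde\lambda}(T)$ to be the corresponding sum of Laurent polynomials in $T$, with $M_i=S^{(\mu)},S^{(\tilde\mu)}$ etc. Then $[S^{(\mu)}\oplus S^{(\tilde\mu)}[1]]*[S^{(\nu)}\oplus S^{(\tilde\nu)}[1]]=\sum_{\lambda,\tilde\lambda}G_{M^\bullet N^\bullet}^{L^\bullet}[S^{(\lambda)}\oplus S^{(\tilde\lambda)}]=\sum_{\lambda,\tilde\lambda}\mathcal G_{\mu,\tilde\mu,\nu,\tilde\nu}^{\lambda,\tilde\lambda}(q)[S^{(\lambda)}\oplus S^{(\tilde\lambda)}]$ as desired, and uniqueness of the Laurent polynomials follows since the equality $\mathcal G(q)=$ structure constant holds for infinitely many prime powers $q$.

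\textbf{Main obstacle.} The one point requiring care is the appearance of the ratio of automorphism orders: a priori this introduces a denominator $b_{L_0}(q^{-1})b_{L_1}(q^{-1})$, i.e.\ factors $(1-q^{-r})$ in the denominator, so the naive bookkeeping only gives a \emph{rational} function of $q$ rather than a Laurent polynomial. To see that it is genuinely in $\Z[T,T^{-1}]$ one should argue that these denominators are cancelled: the cleanest route is to pass through the Riedtmann--Peng formula \eqref{eq:RP}, rewriting each $F_{\cdot\cdot}^{\cdot}\cdot|\aut(\cdot)|$-combination in terms of the symmetric quantities $|\Ext^1(\cdot,\cdot)_{\cdot}|/|\Hom(\cdot,\cdot)|$, whose product over the hexagon manifestly has no spurious denominator because the $|\aut(L_i)|$ in the denominator of \eqref{six-hall} are exactly matched by the $|\aut(L_i)|$ arising in $F_{B_iC_i}^{L_i}=|\Ext^1(B_i,C_i)_{L_i}|/|\Hom(B_i,C_i)|\cdot|\aut(L_i)|/(|\aut(B_i)||\aut(C_i)|)$. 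Once this cancellation is made explicit, what remains is a product of the $q$-integers $|\Ext^1(X,Y)_Z|$ and inverse $q$-integers $|\Hom(X,Y)|^{-1}$ and $|\aut(X)|$-type factors that assemble into an element of $\Z[T,T^{-1}]$; that $|\Hom_\ca(S^{(X)},S^{(Y)})|$ and $|\Ext^1_\ca(S^{(X)},S^{(Y)})|$ are powers of $q$ (Euler form trivial, homs and exts given by products of $\min$'s of parts) makes this last step routine. I would also remark that this is precisely the root-category analogue of \cite[Lemma~5.3 or similar]{CLR25}, so the argument can alternatively be quoted from there.
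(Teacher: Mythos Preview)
Your proposal is correct and shares the paper's starting point: both invoke Proposition~\ref{prop:dHallvsHall} to express the derived Hall number as a finite sum over sextuples of partitions, each summand a product of classical Hall polynomials (in $\Z[T]$ by Hall--Steinitz) times a ratio of automorphism-group orders. The divergence is in how the denominator $|\aut(L_0)||\aut(L_1)|$ is handled. You cancel it explicitly by rewriting $F_{B_iC_i}^{L_i}$ via Riedtmann--Peng \eqref{eq:RP}, after which every surviving factor visibly lies in $\Z[T,T^{-1}]$ (one small imprecision: what remains involves $|\Ext^1(B_i,C_i)_{L_i}|$ with fixed middle term, not the full $|\Ext^1|$; its polynomiality in $q$ follows from that of $G_{\mu,\nu}^\lambda$ recorded just above \eqref{eq:aut} together with $|\Hom|$ being a power of $q$, not directly from the total $\Ext^1$ being a power of $q$). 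The paper instead argues indirectly: from the original definition \eqref{eq:derhallnum}, the derived Hall number is a cardinality divided by $\{M^\bullet,N^\bullet\}=q^m$, so $q^m\mathcal G(q)\in\Z$ for every prime power $q$; writing $\mathcal G(T)=F_1(T)/F_2(T)$ with $F_2$ monic (the aut-order polynomial has leading coefficient $1$), this integrality for infinitely many $q$ forces $F_2\mid T^mF_1$ in $\Z[T]$, whence $\mathcal G\in\Z[T,T^{-1}]$. Your route is more constructive and exposes exactly where the cancellation happens; the paper's route is shorter and avoids tracking individual factors.
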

	
	\begin{proof}
		By Proposition \ref{prop:dHallvsHall} and the above, we can write
		\begin{align}
			[S^{(\mu)}\oplus S^{(\tilde{\mu})}[1]]*[S^{(\nu)}\oplus S^{(\tilde{\nu})}[1]]=&%\sum_{\alpha\in K_0(\ca)}
			\sum_{\lambda,\tilde{\la}} \mathcal G_{\mu,\tilde{\mu},\nu,\tilde{\nu}}^{\lambda,\tilde{\la}}(q)[S^{(\lambda)}\oplus S^{(\tilde{\la})}],
		\end{align}
		where $\mathcal G_{\mu,\tilde{\mu},\nu,\tilde{\nu}}^{\lambda,\tilde{\la}}(T)=\frac{F_1(T)}{F_2(T)}$ for  $F_1(T),F_2(T)\in\Z[T]$, and $F_2(\bq)=|\aut(S^{(\la)})||\aut(S^{(\tilde{\la})})|$.
		A comparison with \eqref{eq:derhallnum} shows that $\bq^m G_{\mu,\tilde{\mu},\nu,\tilde{\nu}}^{\lambda,\tilde{\la}}(q)=\frac{\bq^m F_1(q)}{F_2(q)}\in\Z$ for some $m\in \N$.  Therefore, $F_2(T)$ divides $T^m F_1(T)$, and thus we have $\mathcal G_{\mu,\nu}^{\lambda}(T)\in \Z[T,T^{-1}]$ by noting that $\bq$ can be any prime power.
	\end{proof}
	
	%By Corollary \ref{cor:Hallmult} and the above, there exists a {polynomial} $g_{\mu,\nu}^{\lambda}(T)\in \Z[T,T^{-1}]$ such that
	%\begin{align}
	%[S^{(\mu)}]*[S^{(\nu)}]=&%\sum_{\alpha\in K_0(\ca)}
	%\sum_{\lambda} g_{\mu,\nu}^{\lambda}(\bq)[S^{(\lambda)}]*[K_{S}]^{\frac{|\mu|+|\nu|-|\lambda|}{2}}.
	%\end{align}
	
	We now define the {\em generic derived Hall algebra} of the Jordan quiver over $\Q(t)$, and denote it by $\cd\th(\QJ)$. The algebra $\cd\th(\QJ)$ is the free $\Q(t)$-module with a basis $\{\fu_{\lambda,\tilde{\la}}\mid \lambda,\tilde{\la}\in\cp\}$ and multiplication given by
	\begin{align}
		\label{eq:g}
		\fu_{\mu,\tilde{\mu}}*\fu_{\nu,\tilde{\nu}}=\sum_{\lambda,\tilde{\la}\in\cp}\mathcal G_{\mu,\tilde{\mu},\nu,\tilde{\nu}}^{\lambda,\tilde{\la}}(t^{-1})\fu_{\lambda,\tilde{\la}}.
	\end{align}

	\subsection{Horizontal Pieri rules} % in $\imath$Hall algebra}
	
	We now establish the first version of the Pieri rules in $\cd\th(\bfk \QJ)$.
	
	\begin{lemma}
		\label{lem:HallfomrulaJordan}
		For $r\geq 1$ and any partitions $\rho,\nu$, the following identities hold in $\cd\th(\bfk \QJ)$:
		\begin{align}
			\label{Pieri:h}
			{[S^{(\rho)}\oplus S^{(\nu)}[1]]}*{[S^{(r)}]}
			&= \sum_{a=0}^r\sum_{\la,\mu}F_{\rho,(a)}^{\la}(q)F_{(r-a),\mu}^\nu(q)
			\\\notag
			&\quad\quad \times\dfrac{|\Aut(S^{(\rho)})|\cdot|\Aut(S^{(a)})|\cdot|\Aut(S^{(r-a)})|}{|\Aut(S^{(\la)})|}\cdot {[S^{(\la)}\oplus S^{(\mu)}[1]]}
			,
			\\
			\label{Pieri:v}
			{[S^{(\rho)}\oplus S^{(\nu)}[1]]}*{[S^{(1^r)}]}
			&= \sum_{a=0}^r\sum_{\la,\mu}F_{\rho,(1^a)}^{\la}(q)F_{(1^{r-a}),\mu}^\nu(q)\\\notag
			&\quad\quad\times\dfrac{|\Aut(S^{(\rho)})|\cdot|\Aut(S^{(1^{r})})|}{|\Aut(S^{(\la)})|\cdot q^{a(r-a)}}\cdot {[S^{(\la)}\oplus S^{(\mu)}[1]]}.
		\end{align}
		Dually, we have 
		\begin{align}
			\label{Pieri:h2}
			{[S^{(\rho)}\oplus S^{(\nu)}[1]]}*{[S^{(r)}[1]]}
			&= \sum_{a=0}^r\sum_{\la,\mu}F_{\la,(a)}^{\rho}(q)F_{(r-a),\nu}^\mu(q)
			\\\notag
			&\quad\quad \times\dfrac{|\Aut(S^{(\nu)})|\cdot|\Aut(S^{(a)})|\cdot|\Aut(S^{(r-a)})|}{|\Aut(S^{(\mu)})|}\cdot {[S^{(\la)}\oplus S^{(\mu)}[1]]}
			,
			\\
			\label{Pieri:v2}
			{[S^{(\rho)}\oplus S^{(\nu)}[1]]}*{[S^{(1^r)}[1]]}
			&= \sum_{a=0}^r\sum_{\la,\mu}F_{\la,(1^a)}^{\rho}(q)F_{(1^{r-a}),\nu}^\mu(q)
			\\
			\notag&\qquad\times\dfrac{|\Aut(S^{(\nu)})|\cdot|\Aut(S^{(1^{r})})|}{|\Aut(S^{(\mu)})|\cdot q^{a(r-a)}}\cdot {[S^{(\la)}\oplus S^{(\mu)}[1]]}	.
		\end{align}
		
	\end{lemma}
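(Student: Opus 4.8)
The plan is to derive all four identities from the general derived Hall multiplication formula \eqref{six-hall} of Proposition~\ref{prop:dHallvsHall}, specialized to the situation where one of the two factors is a stalk complex. Consider first \eqref{Pieri:h}, where $M^\bullet = S^{(\rho)}\oplus S^{(\nu)}[1]$ and $N^\bullet = S^{(r)}$, so $N_0 = S^{(r)}$ and $N_1 = 0$. In the sum \eqref{six-hall} over $[A_0],[A_1],[B_0],[B_1],[C_0],[C_1]$, the factor $F^{N_1}_{C_1 A_0} = F^{0}_{C_1 A_0}$ forces $C_1 = A_0 = 0$; then $F^{M_1}_{A_1 B_1} = F^{\nu}_{A_1 B_1}$ with $A_1 = 0$ forces $B_1 = S^{(\nu)}$ (and $F^{\nu}_{0,\nu}=1$), while $F^{L_1}_{B_1 C_1} = F^{L_1}_{\nu, 0}$ forces $L_1 = S^{(\mu)}$ with $\mu = \nu$; wait — more carefully, $C_1 = 0$ and $B_1 = S^{(\nu)}$ give $F^{L_1}_{\nu,0}$, which is $1$ iff $L_1 \cong S^{(\nu)}$. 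That would be wrong. Let me instead keep the roles flexible: the correct reading is that the nilpotent-length bookkeeping in the degree-$1$ part, with $N_1 = 0$, collapses the hexagon so that the surviving free parameter in degree $1$ produces the skew Hall number $F^{\nu}_{(r-a),\mu}(q)$ after matching $|A_0| = a$. Symmetrically, $A_0 = 0$ simplifies the degree-$0$ part to $F^{M_0}_{A_0 B_0} F^{L_0}_{B_0 C_0} F^{N_0}_{C_0 A_1}$ with $A_0 = 0$, forcing $B_0 = S^{(\rho)}$, and then the remaining sum over $C_0, A_1$ with $|A_1| = a$ and $F^{N_0}_{C_0, A_1} = F^{S^{(r)}}_{C_0, A_1}$ contributes $F^{\la}_{\rho,(a)}(q)$ together with $F^{\nu}_{(r-a),\mu}(q)$. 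Collecting the automorphism prefactors from \eqref{six-hall} and using that only $A_0 = A_1 = $ (a single partition of size $a$) survive yields exactly the coefficient displayed in \eqref{Pieri:h}.

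First I would carry out this collapse of the hexagonal sum explicitly for \eqref{Pieri:h}: write down which six subquotients are forced to vanish or to be determined by the others when $N_1 = 0$, reduce the six Hall numbers to the product $F_{\rho,(a)}^{\la}(q) F_{(r-a),\mu}^{\nu}(q)$ summed over $a$, and then simplify the ratio of orders of automorphism groups using \eqref{eq:aut}; several of the $|\aut(\cdot)|$ factors cancel because the corresponding subquotients are $0$ or are pinned to $S^{(\rho)}$, $S^{(\nu)}$, etc. For \eqref{Pieri:v} the argument is identical with $N_0 = S^{(1^r)}$ in place of $S^{(r)}$; the only new feature is the appearance of the factor $q^{a(r-a)}$, which I would extract by combining $|\aut(S^{(a)})|\,|\aut(S^{(r-a)})|$ with $|\aut(S^{(1^r)})|$ via \eqref{eq:aut} and the identity $|\aut(S^{(1^a)})|\,|\aut(S^{(1^{r-a})})|\cdot q^{a(r-a)}=|\aut(S^{(1^r)})|$ together with the skew Hall numbers being evaluated at $(1^a)$-shapes. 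The two dual identities \eqref{Pieri:h2}, \eqref{Pieri:v2} then follow by the same computation with the roles of $M_0,L_0,N_0$ and $M_1,L_1,N_1$ interchanged — i.e., now the degree-$0$ factor $N_0 = 0$ collapses the hexagon in degree $0$, and the surviving degree-$1$ data produces $F^{\rho}_{\la,(a)}(q) F^{\mu}_{(r-a),\nu}(q)$ — so I would simply remark that \eqref{Pieri:h2} and \eqref{Pieri:v2} are obtained from \eqref{Pieri:h} and \eqref{Pieri:v} by applying the shift $[1]$, which on $\cR(\ca)$ swaps $H^0$ and $H^1$ and hence exchanges the two factors in every tensor-type formula.

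The main obstacle will be the bookkeeping in the automorphism-group prefactor: after collapsing the hexagon one is left with a ratio involving $|\aut|$ of up to three surviving subquotients in the numerator, and one must check that the powers of $q$ and the $b_\la(q^{-1})$-type factors assemble exactly into the stated coefficient (including the subtle $q^{a(r-a)}$ in the vertical case). I expect this to be routine but delicate, and I would handle it by substituting \eqref{eq:aut} everywhere and tracking the exponent of $q$ separately from the $\varphi_m(q^{-1})$ parts, using $n(\la)$ additivity and the standard facts about $F_{\mu,\nu}^{\la}$ vanishing unless $|\la| = |\mu| + |\nu|$. Once \eqref{Pieri:h} is written out in full, the remaining three cases are essentially transcriptions with $(r)\leftrightarrow(1^r)$ and $\cdot\leftrightarrow\cdot[1]$ substitutions, so no genuinely new difficulty arises.
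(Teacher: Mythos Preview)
Your strategy---specialize \eqref{six-hall} to $N_1=0$ and collapse the hexagon---is exactly what the paper does. But your execution contains two concrete errors that you should fix.

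First, in the collapse for \eqref{Pieri:h}: from $N_1=0$ you correctly get $A_0=C_1=0$, whence $B_0\cong S^{(\rho)}$ and $L_1\cong B_1$. But $A_1$ is \emph{not} forced to zero; it is the free parameter. The factor $F^{N_0}_{C_0A_1}=F^{(r)}_{C_0A_1}$ is nonzero only for $A_1\cong S^{(r-a)}$ and $C_0\cong S^{(a)}$ (by uniseriality of $S^{(r)}$), and then equals $1$. The two surviving Hall numbers are $F^{L_0}_{B_0C_0}=F^{\la}_{\rho,(a)}$ and $F^{M_1}_{A_1B_1}=F^{\nu}_{(r-a),\mu}$ with $B_1=L_1=S^{(\mu)}$, and the automorphism ratio simplifies to the displayed coefficient after the cancellation $|\Aut(B_1)|=|\Aut(L_1)|$. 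Your paragraph meandered because you tried to impose $A_1=0$ and then ``match $|A_0|=a$''; once you let $A_1$ vary over the subobjects of $S^{(r)}$, the formula drops out in one line, just as in the paper.

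Second, your proposed identity $|\Aut(S^{(1^a)})|\,|\Aut(S^{(1^{r-a})})|\cdot q^{a(r-a)}=|\Aut(S^{(1^r)})|$ is false: the two sides differ by a Gaussian binomial. What is actually needed for \eqref{Pieri:v} is
\[
F_{(1^a),(1^{r-a})}^{(1^r)}(q)\cdot|\Aut(S^{(1^a)})|\,|\Aut(S^{(1^{r-a})})|=\frac{|\Aut(S^{(1^r)})|}{q^{a(r-a)}},
\]
and the paper obtains this directly from the Riedtmann--Peng formula \eqref{eq:RP}: the left side equals $\dfrac{|\Ext^1(S^{(1^a)},S^{(1^{r-a})})_{S^{(1^r)}}|}{|\Hom(S^{(1^a)},S^{(1^{r-a})})|}\cdot|\Aut(S^{(1^r)})|$, and here $|\Ext^1(S^{(1^a)},S^{(1^{r-a})})_{S^{(1^r)}}|=1$ (only the zero extension class has semisimple middle term) while $|\Hom(S^{(1^a)},S^{(1^{r-a})})|=q^{a(r-a)}$. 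No separate appeal to \eqref{eq:aut} or $n(\la)$-bookkeeping is required. The dual identities then follow by the shift $[1]$, as you say.
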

	
	\begin{proof}
		We only prove identities \eqref{Pieri:h} and \eqref{Pieri:v}. Using Proposition \ref{prop:dHallvsHall}, we have
		\begin{align*}
			{[S^{(\rho)}\oplus S^{(\nu)}[1]]}*{[S^{(r)}]}=&\sum_{a=0}^r\sum_{\la,\mu}F_{\rho,(a)}^{\la}(q)F_{(a),(r-a)}^{(r)}(q)F_{(r-a),\mu}^\nu(q)
			\\\notag&\quad\times\dfrac{|\Aut(S^{(\rho)})|\cdot|\Aut(S^{(a)})|\cdot|\Aut(S^{(r-a)})|}{|\Aut(S^{(\la)})|}\cdot {[S^{(\la)}\oplus S^{(\mu)}[1]]},
		\end{align*}
		where $F_{(a),(r-a)}^{(r)}$  equals $1$ for any $0\leq a\leq r$. Hence, we obtain the formula \eqref{Pieri:h}. 
		
		Similarly, by Proposition \ref{prop:dHallvsHall} and Riedtmann-Peng formula, we get
		\begin{align*}
			&{[S^{(\rho)}\oplus S^{(\nu)}[1]]}*{[S^{(1^r)}]}
			\\&=\sum_{a=0}^r\sum_{\la,\mu}F_{\rho,(1^a)}^{\la}(q)F_{(1^a),(1^{r-a})}^{(1^r)}(q)F_{(1^{r-a}),\mu}^\nu(q)\dfrac{|\Aut(S^{(\rho)})|\cdot|\Aut(S^{(1^a)})|\cdot|\Aut(S^{(1^{r-a})})|}{|\Aut(S^{(\la)})|} \\
			&\qquad\qquad\times{[S^{(\la)}\oplus S^{(\mu)}[1]]}	
			\\&=\sum_{a=0}^r\sum_{\la,\mu}F_{\rho,(1^a)}^{\la}(q)F_{(1^{r-a}),\mu}^\nu(q)\frac{|\Ext(S^{(1^a)}),S^{(1^{r-a})})_{S^{(1^r)})}|}{|\Hom(S^{(1^a)}),S^{(1^{r-a})})|}\cdot\dfrac{|\Aut(S^{(\rho)})|\cdot|\Aut(S^{(1^{r})})|}{|\Aut(S^{(\la)})|} \\
			&\qquad\qquad\times{[S^{(\la)}\oplus S^{(\mu)}[1]]}	
			\\&=\sum_{a=0}^r\sum_{\la,\mu}F_{\rho,(1^a)}^{\la}(q)F_{(1^{r-a}),\mu}^\nu(q)\dfrac{|\Aut(S^{(\rho)})|\cdot|\Aut(S^{(1^{r})})|}{|\Aut(S^{(\la)})|\cdot q^{a(r-a)}}\cdot {[S^{(\la)}\oplus S^{(\mu)}[1]]}.	
		\end{align*}
		The proof is completed.
	\end{proof}
	
	We shall now renormalize the formulas \eqref{Pieri:h}--\eqref{Pieri:v2} to make them compatible with the symmetric function side.
	
	For a horizontal $r$-strip $\sigma=\nu-\mu$, let $I=I_{\nu-\mu}$ be the set of integers $i\geq 1$ such that $\sigma_i'=1$ and $\sigma_{i+1}'=0$.
	Recall from \cite[II,(4.13)]{Mac95} that
	\begin{align*}
		F_{\mu,(r)}^{\nu}(T)
		=& \frac{T^{n(\nu)-n(\mu)}}{1-T^{-1}}\prod\limits_{i\in I_{\nu-\mu}}(1-T^{-m_i(\nu)})%=
		%\frac{q^{n(\nu)-n(\mu)}}{1-q^{-1}}\prod\limits_{i\in I}(1-q^{-m_i(\nu)})%=\frac{t^{n(\mu)-n(\nu)}}{1-t}\prod\limits_{i\in I}(1-t^{m_i(\nu)}),
	\end{align*}
	if $r>0$; and $F_{\mu,(0)}^{\nu}(T)=1$. So we have
	\begin{align}
		\label{eq:Gmurnu}
		F_{\mu,(r)}^{\nu}(\bq)\cdot|\aut(S^{(r)})|=\bq^{n(\nu)-n(\mu)+r}\prod\limits_{i\in I_{\nu-\mu}}(1-\bq^{-m_i(\nu)}),\quad \forall r\geq0.
	\end{align}
	
	For any partitions $\lambda,\mu$, denote by
	\begin{align}
		\label{eq:haV}
		\iVh_{\lambda,\mu}:= \bq^{-|\lambda|-n(\lambda)-n(\mu)}\cdot {[S^{(\lambda)}\oplus S^{(\mu)}[1]]}\in \cd\th(\bfk \QJ)
	\end{align}
	and \[
	\iVh^+_{\la}=\iVh_{\la,\emptyset},\qquad\iVh^-_{\mu}=\iVh_{\emptyset,\mu}.
	\]
	In particular, for any $r>0$,
	\begin{align}
		\label{eq:haV-hv}
		\begin{split}
			&\iVh^+_{(r)}= \bq^{-r}\cdot {[S^{(r)}]},\qquad\iVh^+_{(1^r)}= \bq^{-\binom{r+1}{2}}\cdot {[S^{(1^r)}]},\\
			&\iVh^-_{(r)}=  {[S^{(r)}[1]]}, \qquad\quad\,\,\iVh^-_{(1^r)}= \bq^{-\binom{r}{2}}\cdot {[S^{(1^r)}[1]]}.
		\end{split}
	\end{align}
	
	\begin{remark}
		We can also define
		{
			\begin{align}
				\label{eq:haV2}
				{\iVh}_{\lambda,\mu}':= \bq^{-|\mu|-n(\lambda)-n(\mu)}\cdot {[S^{(\lambda)}\oplus S^{(\mu)}[1]]}\in \cd\th(\bfk \QJ)
			\end{align}
		}for any partitions $\lambda,\mu$, and it is direct to check that there is an
		automorphism of $\cd\th(\bfk \QJ)$ given by mapping $\iVh_{\la,\mu}$ to ${\iVh}_{\lambda,\mu}'$.
	\end{remark}
	
	%Recall $\varphi_m(t)$ defined in \eqref{eq:phim} for $m\ge 0$. For any partition $\la$:
	
	%For any two partitions $\la,\mu$, define
	%\begin{align} \label{blm}
	%b_{\la:\mu}(t)=b_\la(t)/b_\mu(t).
	%\end{align}
	
	\begin{proposition}
		\label{prop:PieriHall-Ho}
		For $r\ge 1$ and any partitions $\rho,\nu$, we have{
			\begin{align}
				\label{eq:haPH-ho}
				\iVh_{\rho,\nu}* \iVh^+_{(r)}= &
				\sum_{a+b=r} \sum_{\mu \stackrel{a}{\rightarrow} \nu ,\rho\stackrel{b}{\rightarrow} \la} \, \varphi_{\nu/\mu}(q^{-1}) \psi_{\la/\rho}(q^{-1}) \;  \iVh_{\la,\mu},
			\end{align}
			\begin{align}
				\label{eq:haPH-ho2}
				\iVh_{\rho,\nu}* \iVh^-_{(r)}= &
				\sum_{a+b=r} \sum_{\nu \stackrel{a}{\rightarrow} \mu ,\la\stackrel{b}{\rightarrow} \rho} \, \psi_{\mu/\nu}(q^{-1}) \varphi_{\rho/\la}(q^{-1}) \;  \iVh_{\la,\mu}.
			\end{align}
		}
	\end{proposition}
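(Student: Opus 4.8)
The plan is to deduce \eqref{eq:haPH-ho} and \eqref{eq:haPH-ho2} directly from the Hall-number Pieri rules of Lemma~\ref{lem:HallfomrulaJordan}, by rescaling the basis according to \eqref{eq:haV}--\eqref{eq:haV-hv} and then matching the resulting combinatorial coefficients with $\varphi$ and $\psi$ via \eqref{eq:Gmurnu}, \eqref{eq:aut}, and the classical identity relating $\varphi$ and $\psi$ along a horizontal strip.

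For \eqref{eq:haPH-ho} I would start from \eqref{Pieri:h}, substitute $[S^{(\rho)}\oplus S^{(\nu)}[1]]=\bq^{|\rho|+n(\rho)+n(\nu)}\iVh_{\rho,\nu}$, $[S^{(r)}]=\bq^{r}\iVh^+_{(r)}$, and $[S^{(\la)}\oplus S^{(\mu)}[1]]=\bq^{|\la|+n(\la)+n(\mu)}\iVh_{\la,\mu}$, and then solve for $\iVh_{\rho,\nu}*\iVh^+_{(r)}$. Since the Hall algebra $\th(\bfk\QJ)$ is commutative one has $F^{\nu}_{(r-a),\mu}(\bq)=F^{\nu}_{\mu,(r-a)}(\bq)$, which brings both Hall numbers into the form covered by \eqref{eq:Gmurnu}: the products $F^{\la}_{\rho,(a)}(\bq)\,|\Aut(S^{(a)})|$ and $F^{\nu}_{\mu,(r-a)}(\bq)\,|\Aut(S^{(r-a)})|$ then vanish unless $\rho\stackrel{a}{\rightarrow}\la$ and $\mu\stackrel{r-a}{\rightarrow}\nu$, in which case they equal $\bq^{\,n(\la)-n(\rho)+a}\,\varphi_{\la/\rho}(\bq^{-1})$ and $\bq^{\,n(\nu)-n(\mu)+r-a}\,\varphi_{\nu/\mu}(\bq^{-1})$ respectively. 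The remaining automorphism ratio $|\Aut(S^{(\rho)})|/|\Aut(S^{(\la)})|$ is expanded via \eqref{eq:aut} as $\bq^{\,|\rho|-|\la|+2n(\rho)-2n(\la)}\,b_\rho(\bq^{-1})/b_\la(\bq^{-1})$.

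Assembling these ingredients, the crucial step is to check that the total power of $\bq$ multiplying $\iVh_{\la,\mu}$ --- the sum of $n(\la)-n(\rho)+n(\nu)-n(\mu)+r$, of $|\la|+n(\la)+n(\mu)-|\rho|-n(\rho)-n(\nu)-r$ (contributed by the rescaling), and of $|\rho|-|\la|+2n(\rho)-2n(\la)$ --- collapses identically to $0$, leaving the coefficient $\varphi_{\la/\rho}(\bq^{-1})\,\varphi_{\nu/\mu}(\bq^{-1})\,b_\rho(\bq^{-1})/b_\la(\bq^{-1})$. I would then invoke the classical identity $\psi_{\la/\rho}(t)\,b_\la(t)=\varphi_{\la/\rho}(t)\,b_\rho(t)$, valid whenever $\la-\rho$ is a horizontal strip (see \cite[III, \S5]{Mac95}; it is immediate from \eqref{eq:phipsi} and \eqref{def:bla}), to rewrite this coefficient as $\psi_{\la/\rho}(\bq^{-1})\,\varphi_{\nu/\mu}(\bq^{-1})$, and finally display the sum over $a+b=r$ by renaming the summation index of \eqref{Pieri:h}, obtaining the right-hand side of \eqref{eq:haPH-ho}. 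The dual formula \eqref{eq:haPH-ho2} is handled in the same way, starting from \eqref{Pieri:h2}: now $[S^{(r)}[1]]=\iVh^-_{(r)}$ carries no power of $\bq$, the relevant automorphism ratio is $|\Aut(S^{(\nu)})|/|\Aut(S^{(\mu)})|$, and the strip conditions become $\la\stackrel{a}{\rightarrow}\rho$ and $\nu\stackrel{r-a}{\rightarrow}\mu$; here one additionally uses $|\rho|-|\la|=a$ and $|\mu|-|\nu|=r-a$ to see that the leftover power of $\bq$ vanishes, and the same identity relating $\varphi$ and $\psi$, applied to the strip $\mu-\nu$, turns the coefficient into $\psi_{\mu/\nu}(\bq^{-1})\,\varphi_{\rho/\la}(\bq^{-1})$.

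I expect the exponent bookkeeping of the previous paragraph to be the only point requiring care; everything else is a direct substitution once Lemma~\ref{lem:HallfomrulaJordan}, \eqref{eq:Gmurnu}, \eqref{eq:aut}, and the horizontal-strip identity relating $\varphi$ and $\psi$ are in hand. As a sanity check, the $t=0$ specialization must reproduce the Schur--Laurent Pieri rule of Corollary~\ref{cor:Pieri-Schur}, which pins down the strip types and signs.
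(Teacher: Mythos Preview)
Your proposal is correct and follows essentially the same route as the paper: translate \eqref{Pieri:h} into the $\iVh$-basis via \eqref{eq:haV}--\eqref{eq:haV-hv}, evaluate the Hall numbers through \eqref{eq:Gmurnu} and \eqref{eq:aut}, verify that all powers of $\bq$ cancel, and finish with the identity $\psi_{\la/\rho}(t)=\varphi_{\la/\rho}(t)\,b_\rho(t)/b_\la(t)$ from \cite[III, (5.12)]{Mac95}. The paper only writes out \eqref{eq:haPH-ho} and leaves \eqref{eq:haPH-ho2} as dual, whereas you sketch both; your explicit mention of $F^{\nu}_{(r-a),\mu}=F^{\nu}_{\mu,(r-a)}$ is a point the paper uses silently.
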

	
	\begin{proof}
		We only prove the formula \eqref{eq:haPH-ho}. By \eqref{eq:haV}, the identity \eqref{Pieri:h} translates into 
		\begin{align*}
			\iVh_{\rho,\nu}* \iVh^+_{(r)}
			= \sum_{a=0}^r\sum_{\la,\mu}&F_{\rho,(a)}^{\la}(q)F_{(r-a),\mu}^\nu(q)\cdot \bq^{|\lambda|+n(\lambda)+n(\mu)-|\rho|-n(\rho)-n(\nu)-r}
			\\
			& \times\dfrac{|\Aut(S^{(\rho)})|\cdot|\Aut(S^{(a)})|\cdot|\Aut(S^{(r-a)})|}{|\Aut(S^{(\la)})|}\cdot\iVh_{\la,\mu}.   
		\end{align*}
		Then combining \eqref{eq:Gmurnu} and $|\mathrm{Aut}(S^{(\la)})|=q^{|\la|+2n(\la)}b_\la(q^{-1})$, we get
		\begin{align*}
			\iVh_{\rho,\nu}* \iVh^+_{(r)}=&
			\sum_{a+b=r} \sum_{\mu \stackrel{a}{\rightarrow} \nu ,\rho\stackrel{b}{\rightarrow} \la} \, \varphi_{\nu/\mu}(q^{-1}) \varphi_{\la/\rho}(q^{-1})\cdot q^{n(\la)-n(\rho)+n(\nu)-n(\mu)+r} \;  
			\\ &\qquad\times\bq^{|\lambda|+n(\lambda)+n(\mu)-|\rho|-n(\rho)-n(\nu)-r}\frac{q^{|\rho|+2n(\rho)}b_\rho(q^{-1})}{q^{|\la|+2n(\la)}b_\la(q^{-1})}\cdot\iVh_{\la,\mu}
			\\=&\sum_{a+b=r} \sum_{\mu \stackrel{a}{\rightarrow} \nu ,\rho\stackrel{b}{\rightarrow} \la} \, \varphi_{\nu/\mu}(q^{-1}) \varphi_{\la/\rho}(q^{-1})\frac{b_\rho(q^{-1})}{b_\la(q^{-1})}\cdot \iVh_{\la,\mu}.
		\end{align*}
		The desired formula \eqref{eq:haPH-ho} immediately follows by \cite[III, (5.12)]{Mac95}
		\[
		\psi_{\la/\rho}(q^{-1})=\varphi_{\la/\rho}(q^{-1})\cdot\frac{b_\rho(q^{-1})}{b_\la(q^{-1})}.
		\]The proof is completed.
	\end{proof}

	\subsection{Derived Hall algebra and the ring $\mathcal{D}\La$}
	
	%The natural partial ordering on $\cp_n$ is defined as follows:
	%\[\lambda\succeq \mu \text{ if }\lambda_1+\cdots+\lambda_i\geq \mu_1+\cdots+\mu_i \text{ for all }i\geq1.\]
	Recall the  ring $\mathcal{D}\La_{t} = \Q(t) [v_1^\pm, v_2^\pm, \ldots]$ and $\mathcal{D}\La_{q^{-1}} = \Q [v_1^\pm, v_2^\pm, \ldots]$, where $V_{\la,\mu}\in \mathcal{D}\La_{q^{-1}}$ is defined by specializing $V_{\la,\mu}\in\mathcal{D}\La_{t}$ at $t=q^{-1}$ for given $\bfk=\mathbb F_q$. Recall $\iVh_{\la,\mu}$ from \eqref{eq:haV}.
	
	\begin{theorem}
		\label{thm:iso}
		There exists a $\Q$-algebra isomorphism $\Phi_{(\bq)}: \cd\th(\bfk \QJ) \longrightarrow \mathcal{D}\La_{q^{-1}}$ such that
		\begin{align*}
			\Phi_{(\bq)} ({[S^{(r)}]}) =q^r v^+_r ,\qquad\Phi_{(\bq)} ({[S^{(r)}[1]]}) = v^-_r \quad \text{  for  }r\ge 1.
		\end{align*}
		Moreover, for any partitions $\la,\mu$, we have
		\begin{align}
			\label{eqn:Phibasis}
			\Phi_{(\bq)}({[S^{(\la)}\oplus S^{(\mu)}[1]]})=\bq^{|\la| +n(\la)+n(\mu)} V_{\la,\mu},
			\qquad
			\Phi_{(\bq)}(\iVh_{\la,\mu})= V_{\la,\mu}.
		\end{align}
	\end{theorem}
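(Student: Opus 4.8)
The plan is to build $\Phi_{(\bq)}$ as an algebra isomorphism by comparing presentations, and then to pin it down on the Hall basis by exploiting that the horizontal Pieri rules characterize the double HL functions $V_{\la,\mu}$. First I would construct $\Phi_{(\bq)}$: by Proposition~\ref{generator}(2) the derived Hall algebra $\cd\th(\bfk\QJ)$ is the polynomial $\Q$-algebra freely generated by $\{[S^{(r)}],\,[S^{(r)}[1]]\mid r\ge1\}$, whereas $\mathcal{D}\La_{q^{-1}}=\Q[v^\pm_1,v^\pm_2,\dots]$ is the polynomial $\Q$-algebra freely generated by $\{v^+_r,v^-_r\mid r\ge1\}$. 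Hence the assignment $[S^{(r)}]\mapsto \bq^r v^+_r$, $[S^{(r)}[1]]\mapsto v^-_r$ extends uniquely to a $\Q$-algebra homomorphism $\Phi_{(\bq)}$; since $\bq\in\Q^\times$, the assignment $v^+_r\mapsto \bq^{-r}[S^{(r)}]$, $v^-_r\mapsto[S^{(r)}[1]]$ furnishes a two-sided inverse, so $\Phi_{(\bq)}$ is an isomorphism. Combined with \eqref{eq:haV-hv} this gives $\Phi_{(\bq)}(\iVh^+_{(r)})=v^+_r$, $\Phi_{(\bq)}(\iVh^-_{(r)})=v^-_r$, and $\Phi_{(\bq)}(\iVh_{\emptyset,\emptyset})=\Phi_{(\bq)}([0])=1$.

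Next, set $W_{\la,\mu}:=\Phi_{(\bq)}(\iVh_{\la,\mu})$. Applying the algebra homomorphism $\Phi_{(\bq)}$ to the two identities of Proposition~\ref{prop:PieriHall-Ho} and using $\Phi_{(\bq)}(\iVh^\pm_{(r)})=v^\pm_r$, the family $\{W_{\la,\mu}\}_{\la,\mu\in\cp}$ satisfies in $\mathcal{D}\La_{q^{-1}}$ exactly the two horizontal Pieri recursions of Theorem~\ref{thm:Pieri-Ho} (specialized at $t=\bq^{-1}$), together with $W_{\emptyset,\emptyset}=1$; the same of course holds for $\{V_{\la,\mu}\}$. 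Thus it suffices to show that these recursions, together with the value at $(\emptyset,\emptyset)$, determine a family $\{X_{\la,\mu}\}$ uniquely: this forces $W_{\la,\mu}=V_{\la,\mu}$, i.e.\ $\Phi_{(\bq)}(\iVh_{\la,\mu})=V_{\la,\mu}$, and then $\Phi_{(\bq)}([S^{(\la)}\oplus S^{(\mu)}[1]])=\bq^{|\la|+n(\la)+n(\mu)}V_{\la,\mu}$ is immediate from \eqref{eq:haV}.

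For the uniqueness I would run a double induction: primary on $n=|\la|+|\mu|$, secondary and downward on the dominance order restricted to the (finite) set of bipartitions of total size $n$. The case $(\emptyset,\emptyset)$ is the normalization. If $\la\ne\emptyset$, put $\bar\la=(\la_2,\la_3,\dots)$ and expand $X_{\bar\la,\mu}\cdot v^+_{\la_1}$ by the first Pieri rule. The coefficient of $X_{\la,\mu}$ is $\varphi_{\mu/\mu}(\bq^{-1})\psi_{\la/\bar\la}(\bq^{-1})=1$: the first factor is an empty product, and $\la/\bar\la$ is a horizontal $\la_1$-strip having exactly one box in each of the columns $1,\dots,\la_1$, so $J_{\la/\bar\la}=\emptyset$ and the second factor is $1$ as well. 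Each remaining summand $X_{\la',\mu'}$ comes from a decomposition $a+b=\la_1$ (with $\mu/\mu'$ a horizontal $a$-strip and $\la'/\bar\la$ a horizontal $b$-strip) and has total size $n-2(\la_1-b)$; if $b<\la_1$ this is $<n$ and the term is known by the primary induction, while if $b=\la_1$ then $\mu'=\mu$ and $\la'/\bar\la$ is a horizontal $\la_1$-strip, so $\la'\ne\la$ forces $\la'\rhd\la$, hence $(\la',\mu)\rhd(\la,\mu)$ and the term is known by the secondary induction. Therefore $X_{\la,\mu}$ is determined. The case $\la=\emptyset$, $\mu\ne\emptyset$ is handled symmetrically with the second Pieri rule and $\bar\mu=(\mu_2,\mu_3,\dots)$.

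The only genuinely non-formal ingredient is the elementary combinatorial lemma just invoked: for a partition $\la$ with $\bar\la=(\la_2,\la_3,\dots)$, the skew shape $\la/\bar\la$ is a horizontal $\la_1$-strip, $\psi_{\la/\bar\la}(t)=1$, and every partition $\la'$ with $\la'/\bar\la$ a horizontal $\la_1$-strip satisfies $\la'\unrhd\la$ with equality only for $\la'=\la$ (the dominance assertion follows from $\la'_i\le\bar\la_{i-1}=\la_i$ for $i\ge2$ together with $|\la'|=|\la|$). One must keep track of the size-reversed convention on the extended dominance order on partitions used in the basis proposition, but with that in hand everything goes through; the construction of $\Phi_{(\bq)}$ and the transport of Proposition~\ref{prop:PieriHall-Ho} through it are purely formal.
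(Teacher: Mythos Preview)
Your argument is correct and follows essentially the same strategy as the paper: construct $\Phi_{(\bq)}$ from Proposition~\ref{generator}(2), transport the horizontal Pieri rule of Proposition~\ref{prop:PieriHall-Ho} through $\Phi_{(\bq)}$, and compare with Theorem~\ref{thm:Pieri-Ho} via a size/dominance induction. The only differences are cosmetic: the paper deletes the \emph{last} row of $\la$ (so the residual partition has smaller $|\la|$ and the remaining same-size terms are strictly more dominant), runs the primary induction on $|\la|$ alone, and first disposes of the cases $\la=\emptyset$ or $\mu=\emptyset$ by reducing to the classical HL identity \cite[III,(3.4)]{Mac95}; you delete the \emph{first} row, induct on $|\la|+|\mu|$, and absorb the one-sided cases into the same induction. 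Both organizations yield the same unit leading coefficient $\psi_{\la/\bar\la}(t)=1$ and the same dominance comparison $\tilde\la\unrhd\la$ for the residual terms, so nothing of substance changes.
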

	
	\begin{proof}
		The first statement follows from Proposition \ref{generator}.
		
		The second statement will be proved using Proposition \ref{prop:PieriHall-Ho} and Theorem~ \ref{thm:Pieri-Ho}. First we consider the case $\la=\emptyset$ or $\mu=\emptyset$. We have 
		\[
		\iVh_{\rho,\emptyset}* \iVh^+_{(r)}= 
		\sum_{\rho\stackrel{r}{\rightarrow} \la}  \psi_{\la/\rho}(q^{-1}) \;  \iVh_{\la,\emptyset},\quad
		\iVh_{\emptyset,\nu}* \iVh^-_{(r)}=\sum_{\nu \stackrel{r}{\rightarrow} \mu } \, \psi_{\mu/\nu}(q^{-1})  \;  \iVh_{\emptyset,\mu}.
		\]
		Then it follows from \cite[III, (3.4)]{Mac95} that \[
		\Phi_{(\bq)}(\iVh_{\la,\emptyset})= V_{\la,\emptyset},\quad \Phi_{(\bq)}(\iVh_{\emptyset,\mu})= V_{\emptyset,\mu},
		\]
		On this basis, we prove \eqref{eqn:Phibasis} by induction on $n=|\la|$ and the dominance of $\la$ on $\mathcal{P}_n$ for $\iVh_{\la,\mu}$. The case for $n=0$ is clear. If $n>1$, let $\rho$ be the partition obtained from $\la$ by deleting the last row (may be empty). Suppose that the last row has $r$ elements. Then by Proposition \ref{prop:PieriHall-Ho}, we have
		\[
		\iVh_{\rho,\mu}* \iVh^+_{(r)}=\iVh_{\la,\mu}+  \sum_{{\rho\stackrel{r}{\rightarrow} \tilde{\la},\la\triangleright\tilde{\la}}} \,  \psi_{\la/\rho}(q^{-1}) \;  \iVh_{\tilde{\la},\mu}+
		\sum_{\substack{a+b=r\\a\ne0}} \sum_{{\tilde{\mu} \stackrel{a}{\rightarrow} \mu ,\rho\stackrel{b}{\rightarrow} \tilde{\la}}} \, \varphi_{\mu/\tilde{\mu}}(q^{-1}) \psi_{\tilde{\la}/\rho}(q^{-1}) \;  \iVh_{\tilde{\la},\tilde{\mu}}.
		\]
		By Theorem~ \ref{thm:Pieri-Ho}, we have
		\[
		V_{\rho,\mu}\cdot v^+_{r}=V_{\la,\mu}+  \sum_{{\rho\stackrel{r}{\rightarrow} \tilde{\la},\la\triangleright\tilde{\la}}} \,  \psi_{\la/\rho}(q^{-1}) \;  V_{\tilde{\la},\mu}+
		\sum_{\substack{a+b=r\\a\ne0}} \sum_{{\tilde{\mu} \stackrel{a}{\rightarrow} \mu ,\rho\stackrel{b}{\rightarrow} \tilde{\la}}} \, \varphi_{\mu/\tilde{\mu}}(q^{-1}) \psi_{\tilde{\la}/\rho}(q^{-1}) \;  V_{\tilde{\la},\tilde{\mu}}.
		\]
		By the inductive assumption and comparing the above two identities, we obtain that $\Phi_{(\bq)}(\iVh_{\la,\mu})= V_{\la,\mu}$. 
	\end{proof}
	
	\begin{remark}
		\label{rem:PQ}
		The Hall products in this paper and in \cite{Mac95} use different normalizations (though the resulting Hall algebras of the Jordan quiver are isomorphic via $[S^\la] \mapsto [S^\la]/ |\Aut(S^\la)|$).
	\end{remark}
	
	As a corollary, we can formulate the isomorphism for the generic derived Hall algebra:
	\begin{corollary}
		\label{cor:isomorphism}
		There exists a $\Q(t)$-algebra isomorphism $\Phi_t: \cd\th(\QJ) \longrightarrow \mathcal{D}\La_t$ such that
		\begin{align*}
			\Phi_t (\fu_{(r),\emptyset}) =t^{-r} v^+_r ,\qquad\Phi_t (\fu_{\emptyset,(r)}) = v^-_r \quad \text{  for  }r\ge 1.
		\end{align*}
		Moreover, for any partitions $\la,\mu$, we have{
			\begin{align*}
				\Phi_t(\fu_{\lambda,\mu})=t^{-|\la| -n(\la)-n(\mu)} V_{\la,\mu},
				\qquad
				\Phi_t(\iVh_{\la,\mu})= V_{\la,\mu}.
		\end{align*}}
	\end{corollary}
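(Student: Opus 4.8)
The plan is to deduce this generic statement from its finite-field counterpart, Theorem~\ref{thm:iso}, by a Zariski-density argument over the parameter $t$, and then to read off the images of the generators. Recall from \eqref{eq:haV} (read generically, with $\bq^{-1}$ in the role of $t$) that in $\cd\th(\QJ)$ one has $\iVh_{\la,\mu}=t^{|\la|+n(\la)+n(\mu)}\fu_{\la,\mu}$; hence $\{\iVh_{\la,\mu}\mid\la,\mu\in\cp\}$ is again a $\Q(t)$-basis of $\cd\th(\QJ)$, and since $\{V_{\la,\mu}\mid\la,\mu\in\cp\}$ is a $\Q(t)$-basis of $\mathcal{D}\La_t$, there is a unique $\Q(t)$-linear isomorphism $\Phi_t\colon\cd\th(\QJ)\to\mathcal{D}\La_t$ with $\Phi_t(\iVh_{\la,\mu})=V_{\la,\mu}$ for all $\la,\mu$. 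The only thing that needs proof is that $\Phi_t$ is multiplicative; granting that, the formula $\Phi_t(\fu_{\la,\mu})=t^{-|\la|-n(\la)-n(\mu)}V_{\la,\mu}$ is immediate from $\fu_{\la,\mu}=t^{-|\la|-n(\la)-n(\mu)}\iVh_{\la,\mu}$, and taking $\la=(r),\mu=\emptyset$ (so $n((r))=n(\emptyset)=0$) gives $\Phi_t(\fu_{(r),\emptyset})=t^{-r}V_{(r),\emptyset}=t^{-r}v^+_r$, while $\la=\emptyset,\mu=(r)$ gives $\Phi_t(\fu_{\emptyset,(r)})=V_{\emptyset,(r)}=v^-_r$.

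For multiplicativity I would first observe that the structure constants on both sides are Laurent polynomials in $t$. On the derived Hall side this follows from the multiplication rule \eqref{eq:g}, from $\mathcal G^{\la,\tilde\la}_{\mu,\tilde\mu,\nu,\tilde\nu}(T)\in\Z[T,T^{-1}]$ (Lemma~\ref{lem:Hallpoly}), and from the fact that the rescaling $\iVh_{\la,\mu}=t^{|\la|+n(\la)+n(\mu)}\fu_{\la,\mu}$ only multiplies by integer powers of $t$; thus $\iVh_{\mu,\tilde\mu}*\iVh_{\nu,\tilde\nu}=\sum_{\la,\tilde\la}c^{\la,\tilde\la}(t)\,\iVh_{\la,\tilde\la}$ with each $c^{\la,\tilde\la}(t)\in\Z[t,t^{-1}]$ (finitely many nonzero). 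On the symmetric-function side, \eqref{eq:LRq} shows the transition between $\{v_{\la,\mu}\}$ and $\{V_{\la,\mu}\}$ is unitriangular with entries in $\Z[t,t^{-1}]$ (since $\L$ and $\R^\pm$ involve only $1-t^{-1}$ and powers of $t$), hence so is its inverse, and therefore $V_{\mu,\tilde\mu}\cdot V_{\nu,\tilde\nu}=\sum_{\la,\tilde\la}d^{\la,\tilde\la}(t)\,V_{\la,\tilde\la}$ with each $d^{\la,\tilde\la}(t)\in\Z[t,t^{-1}]$. Now for every prime power $q$ one may specialize $t=q^{-1}$ (harmless, as everything lies in $\Z[t,t^{-1}]$); by \eqref{eqn:Phibasis} this turns $\Phi_t$ into the algebra isomorphism $\Phi_{(\bq)}$ of Theorem~\ref{thm:iso} with $\bfk=\F_q$, so $c^{\la,\tilde\la}(q^{-1})=d^{\la,\tilde\la}(q^{-1})$ for all prime powers $q$. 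Since the set of reciprocals of prime powers is infinite and a nonzero Laurent polynomial has only finitely many roots, $c^{\la,\tilde\la}(t)=d^{\la,\tilde\la}(t)$ in $\Z[t,t^{-1}]$. Hence $\Phi_t(\iVh_{\mu,\tilde\mu}*\iVh_{\nu,\tilde\nu})=V_{\mu,\tilde\mu}\cdot V_{\nu,\tilde\nu}=\Phi_t(\iVh_{\mu,\tilde\mu})\cdot\Phi_t(\iVh_{\nu,\tilde\nu})$ for all basis elements, so $\Phi_t$ is a $\Q(t)$-algebra homomorphism, and being bijective, an isomorphism.

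I expect the main obstacle to be a matter of care rather than of ideas: one must make the specialization step fully airtight, i.e.\ check that both families of structure constants genuinely lie in $\Z[t,t^{-1}]$ (so that no poles obstruct the evaluations $t=q^{-1}$), and confirm via \eqref{eqn:Phibasis} that the specialization of the candidate $\Phi_t$ really is the already-proved map $\Phi_{(\bq)}$. A more self-contained alternative would avoid specialization altogether: first upgrade Proposition~\ref{prop:PieriHall-Ho} to generic Pieri rules $\iVh_{\rho,\nu}*\iVh^+_{(r)}=\sum\varphi_{\nu/\mu}(t)\psi_{\la/\rho}(t)\,\iVh_{\la,\mu}$ and its $\iVh^-$ analogue in $\cd\th(\QJ)$; note that $\cd\th(\QJ)$ is generated by $\{\iVh^\pm_{(r)}\mid r\ge1\}$ (the generic form of Proposition~\ref{generator}); and then verify multiplicativity of $\Phi_t$ on generators by matching these generic Pieri rules against Theorem~\ref{thm:Pieri-Ho}, with $\Phi_t(\iVh_{\la,\mu})=V_{\la,\mu}$ reproved by the same induction on $|\la|$ and the dominance order as in the proof of Theorem~\ref{thm:iso}.
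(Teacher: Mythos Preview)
Your argument is correct. The paper itself offers no explicit proof of the corollary; it is simply stated as an immediate consequence of Theorem~\ref{thm:iso} (``As a corollary, we can formulate the isomorphism for the generic derived Hall algebra'').

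Your primary route via specialization at $t=q^{-1}$ for all prime powers $q$ and Zariski density is valid, and your care in checking that both families of structure constants live in $\Z[t,t^{-1}]$ (via Lemma~\ref{lem:Hallpoly} on one side and the unitriangularity of \eqref{eq:LRq} on the other) is exactly what is needed to make the evaluations legitimate. However, this is more machinery than the paper intends: the implicit argument is precisely your ``more self-contained alternative''. Every ingredient in the proof of Theorem~\ref{thm:iso}---Proposition~\ref{generator}, Proposition~\ref{prop:PieriHall-Ho}, and the induction via Theorem~\ref{thm:Pieri-Ho}---involves only identities whose coefficients are explicit polynomials in $q^{-1}$ (namely $\varphi_{\nu/\mu}$, $\psi_{\la/\rho}$, and the rescaling powers in \eqref{eq:haV}), so the entire proof lifts verbatim to $\Q(t)$ by replacing $q^{-1}$ with $t$ throughout. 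Your density argument recovers the same conclusion by comparing the two sides after the fact rather than rerunning the induction, which is perfectly fine but slightly indirect.
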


	\subsection{Vertical Pieri rules}
	
	Let $n, r \in \Z$. Define
	\begin{align*}\begin{bmatrix} n \\ r \end{bmatrix}_+(t)=\frac{\varphi_n(t)}{\varphi_r(t)\varphi_{n-r}(t)}
		=\frac{(1-t^n)(1-t^{n-1})\cdots(1-t^{n-r+1})}{(1-t)(1-t^2)\cdots(1-t^r)}
	\end{align*}
	for $r\ge 0$ and
	$\begin{bmatrix} n \\ r \end{bmatrix}_+(t)=0$
	for $r<0$.
	
	For any partitions $\la,\mu$ and $m\geq0$, define (cf. \cite[III, (3.2)]{Mac95})
	\begin{align}
		\label{eq:fmu}
		f_{\mu,(1^m)}^\la(t)=&\begin{cases}
			\prod_{i\geq1} \qbinom{\la'_i-\la'_{i+1}}{\la'_i-\mu'_i}_+(t),& \text{ if }\lambda-\mu \text{ is a vertical }m\text{-strip},
			\\
			0,& \text{ otherwise.}
		\end{cases}
	\end{align}
	
	Recall that, $n(\la)=\sum_{i\ge1}(i-1)\la_i$ for any partition $\la$. By \cite[III, (3.3)]{Mac95}, we have
	\begin{align}
		\label{eq:Gf}
		F^\la_{\mu,(1^m)}(q)=q^{n(\la)-n(\mu)-n(1^m)} f^\la_{\mu,(1^m)}(q^{-1}).
	\end{align}
	Given partitions $\mu, \nu$ and $a \in \N$, we use
	\[
	\mu {\downarrow}\nu,
	\qquad (\text{respectively, } \mu \stackrel{a}{\downarrow}\nu)
	\]
	to denote that $\nu \leq \mu$ and $\mu -\nu$ is a vertical strip (respectively, a vertical $a$-strip).
	
	\begin{theorem}
		\label{prop:haPieri-v}
		For $r\geq1$ and any partitions $\rho,\nu$, we have in $\cd\widetilde{\ch}(\bfk\QJ)${
			\begin{align}
				\label{eq:haPieri-v1}
				\iVh_{\rho,\nu}*\iVh^+_{(1^r)}
				=&\sum_{a+b=r} \sum_{\la\stackrel{a}{\downarrow} \rho} \sum_{\nu \stackrel{b}{\downarrow}\mu}
				\ \frac{b_{\rho}(\bq^{-1})}{b_\la(\bq^{-1})}\varphi_r(\bq^{-1})  f_{\rho,(1^{a})}^\la(\bq^{-1}) f_{\mu,(1^b)}^\nu(\bq^{-1}) \cdot \iVh_{\la,\mu},
			\end{align}
			\begin{align}
				\label{eq:haPieri-v2}
				\iVh_{\rho,\nu}*\iVh^-_{(1^r)}
				=&\sum_{a+b=r} \sum_{\rho\stackrel{a}{\downarrow} \la} \sum_{\mu \stackrel{b}{\downarrow}\nu}
				\  \frac{b_{\nu}(\bq^{-1})}{b_\mu(\bq^{-1})}\varphi_r(\bq^{-1})  f_{\la,(1^{a})}^\rho(\bq^{-1}) f_{\nu,(1^b)}^\mu(\bq^{-1}) \cdot \iVh_{\la,\mu}.
		\end{align}}
	\end{theorem}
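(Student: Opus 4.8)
The plan is to obtain both identities by renormalising the Hall-algebra vertical Pieri rules \eqref{Pieri:v} and \eqref{Pieri:v2} already established in Lemma~\ref{lem:HallfomrulaJordan}. I will spell out the argument for \eqref{eq:haPieri-v1}; the identity \eqref{eq:haPieri-v2} is entirely parallel, starting from \eqref{Pieri:v2} and using $\iVh^-_{(1^r)}=\bq^{-\binom r2}{[S^{(1^r)}[1]]}$ in place of $\iVh^+_{(1^r)}=\bq^{-\binom{r+1}2}{[S^{(1^r)}]}$.

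First I would write $\iVh_{\rho,\nu}*\iVh^+_{(1^r)}=\bq^{-|\rho|-n(\rho)-n(\nu)-\binom{r+1}2}\,{[S^{(\rho)}\oplus S^{(\nu)}[1]]}*{[S^{(1^r)}]}$, insert \eqref{Pieri:v}, and rewrite each $[S^{(\la)}\oplus S^{(\mu)}[1]]$ appearing on the right, via \eqref{eq:haV}, as $\bq^{|\la|+n(\la)+n(\mu)}\iVh_{\la,\mu}$. Into the resulting coefficient I then substitute the Hall-polynomial evaluation \eqref{eq:Gf} (with $n(1^m)=\binom m2$), i.e.\ $F^{\la}_{\rho,(1^a)}(q)=q^{\,n(\la)-n(\rho)-\binom a2}f^\la_{\rho,(1^a)}(q^{-1})$ and $F^{\nu}_{\mu,(1^{r-a})}(q)=q^{\,n(\nu)-n(\mu)-\binom{r-a}2}f^\nu_{\mu,(1^{r-a})}(q^{-1})$, the vanishing of $f$ outside vertical strips automatically restricting the sums to $\la\stackrel{a}{\downarrow}\rho$ and $\nu\stackrel{r-a}{\downarrow}\mu$, together with the automorphism counts \eqref{eq:aut}: $|\Aut(S^{(\rho)})|=q^{|\rho|+2n(\rho)}b_\rho(q^{-1})$, $|\Aut(S^{(\la)})|=q^{|\la|+2n(\la)}b_\la(q^{-1})$, and $|\Aut(S^{(1^r)})|=q^{r+2\binom r2}\varphi_r(q^{-1})=q^{r^2}\varphi_r(q^{-1})$.

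The core of the proof is then the purely numerical bookkeeping of the exponent of $q$: all occurrences of $n(\la),n(\rho),n(\mu),n(\nu)$ and of $|\la|,|\rho|$ cancel in pairs, and the leftover scalar exponent $-\binom a2-\binom{r-a}2+r^2-a(r-a)-\binom{r+1}2$ is identically $0$. What survives is exactly the factor $b_\rho(q^{-1})/b_\la(q^{-1})$, the factor $\varphi_r(q^{-1})$ coming from $|\Aut(S^{(1^r)})|$, and the coefficients $f^\la_{\rho,(1^a)}(q^{-1})$ and $f^\nu_{\mu,(1^{r-a})}(q^{-1})$, which is precisely \eqref{eq:haPieri-v1}. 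For \eqref{eq:haPieri-v2} the analogous leftover exponent is $-\binom a2-\binom{r-a}2+r^2-a(r-a)-\binom r2-r$, the extra $-r$ arising because the strip conditions carried by $F^{\rho}_{\la,(1^a)}$ and $F^{\mu}_{\nu,(1^{r-a})}$ now force $|\la|-|\rho|=-a$ and $|\nu|-|\mu|=-(r-a)$; this again vanishes since $\binom{r+1}2=\binom r2+r$, and the surviving factors reassemble into $\tfrac{b_\nu(q^{-1})}{b_\mu(q^{-1})}\varphi_r(q^{-1})f^\rho_{\la,(1^a)}(q^{-1})f^\mu_{\nu,(1^b)}(q^{-1})$. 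Since every derived Hall number is the evaluation at $q^{-1}$ of one of the universal Laurent polynomials of Lemma~\ref{lem:Hallpoly}, both identities hold verbatim in $\cd\th(\QJ)$ over $\Q(t)$ with $q^{-1}$ replaced by $t$.

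I do not anticipate any real obstacle: the argument is a change of basis plus a one-line binomial-coefficient identity. The only points requiring care are keeping the orientations of the vertical strips straight ($F^\la_{\rho,(1^a)}$ forces $\rho\le\la$, hence $\la\stackrel{a}{\downarrow}\rho$, in \eqref{eq:haPieri-v1}, whereas $F^\rho_{\la,(1^a)}$ forces $\la\le\rho$, hence $\rho\stackrel{a}{\downarrow}\la$, in \eqref{eq:haPieri-v2}) and tracking the $q$-exponents faithfully; a useful sanity check is to specialise to $\nu=\emptyset$, where \eqref{eq:haPieri-v1} must reduce to the classical vertical Pieri rule for Hall--Littlewood functions in \cite{Mac95}.
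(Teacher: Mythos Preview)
Your proposal is correct and follows essentially the same approach as the paper: both arguments start from the Hall-algebra identity \eqref{Pieri:v}, substitute the normalisations \eqref{eq:haV}, the Hall-polynomial evaluation \eqref{eq:Gf}, and the automorphism counts \eqref{eq:aut}, and then verify that all $q$-exponents cancel. Your treatment is in fact slightly more explicit about the scalar identity $-\binom a2-\binom{r-a}2+r^2-a(r-a)-\binom{r+1}2=0$, which the paper handles inside one long display.
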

	
	\begin{proof}
		We only prove the first equation.
		Starting from \eqref{eq:haV} and \eqref{Pieri:v} and using also \eqref{eq:Gf} and $|\mathrm{Aut}(S^{(\la)})|=q^{|\la|+2n(\la)}b_\la(q^{-1})$, we have
		\begin{align*}
			\iVh_{\rho,\nu}*\iVh^+_{(1^r)}=&\sum_{a+b=r} \sum_{\la\stackrel{a}{\downarrow} \rho} \sum_{\nu \stackrel{b}{\downarrow}\mu}F_{\rho,(1^a)}^{\la}F_{(1^{b}),\mu}^\nu\cdot\dfrac{|\Aut(S^{(\rho)})|\cdot|\Aut(S^{(1^{r})})|}{|\Aut(S^{(\la)})|\cdot q^{a(r-a)}}
			\\&\qquad\qquad\qquad\times q^{n(\la)+n(\mu)+|\la|-n(\rho)-n(\nu)-|\rho|-n(1^r)-r} \cdot\iVh_{\la,\mu}
			\\=&\sum_{a+b=r} \sum_{\la\stackrel{a}{\downarrow} \rho} \sum_{\nu \stackrel{b}{\downarrow}\mu}f_{\rho,(1^a)}^{\la}(q^{-1})f_{(1^{b}),\mu}^\nu(q^{-1})\cdot\dfrac{|\Aut(S^{(\rho)})|\cdot|\Aut(S^{(1^{r})})|}{|\Aut(S^{(\la)})|\cdot q^{a(r-a)}}
			\\&\qquad\qquad\qquad\times q^{2n(\la)+|\la|-2n(\rho)-|\rho|-n(1^r)-r-n(1^a)-n(1^b)} \cdot\iVh_{\la,\mu}
			\\=&\sum_{a+b=r} \sum_{\la\stackrel{a}{\downarrow} \rho} \sum_{\nu \stackrel{b}{\downarrow}\mu}f_{\rho,(1^a)}^{\la}(q^{-1})f_{(1^{b}),\mu}^\nu(q^{-1})\cdot\dfrac{b_\rho(q^{-1})}{b_\la(q^{-1})}
			\\&\qquad\qquad\qquad\times|\Aut(S^{(1^{r})})|\cdot q^{-n(1^r)-r-n(1^a)-n(1^{r-a})-a(r-a)} \cdot\iVh_{\la,\mu}
			\\=&\sum_{a+b=r} \sum_{\la\stackrel{a}{\downarrow} \rho} \sum_{\nu \stackrel{b}{\downarrow}\mu}
			\frac{b_{\rho}(\bq^{-1})}{b_\la(\bq^{-1})}\varphi_r(\bq^{-1})  f_{\rho,(1^{a})}^\la(\bq^{-1}) f_{\mu,(1^b)}^\nu(\bq^{-1}) \cdot \iVh_{\la,\mu},
		\end{align*}
		which is the desired \eqref{eq:haPieri-v1}.
	\end{proof}

	\section{Generating functions and  their transition relations}
	\label{sec:generating}
	
	In this section, we work with the derived Hall algebra of the Jordan quiver $\cd\widetilde{\ch}(\bfk \QJ) \otimes_{\Q} \Q(\sqq)$, where $\bfk =\mathbb F_q$ and $\sqq=\sqrt{q}.$ Define the following generating functions of several natural generating sets in the derived Hall algebra (compare \cite{BKa01, Sch06, LRW23, LRW25}):
	
	\begin{align}\label{gen1}
		\widetilde{E}(y,z)=\sum_{r,t\ge0}\sqq^{r(r-1)+t(t-1)}\frac{[S^{(1^r)}\oplus S^{(1^t)}[1]]}{|\Aut(S^{(1^r)})||\Aut(S^{(1^t)})|}y^r z^t,
	\end{align}
	
	\begin{align}
		\widetilde{H}(y,z)=\sum_{r,t\ge0}\sum_{\la\vdash r,\mu \vdash t}\frac{[S^{(\la)}\oplus S^{(\mu)}[1]]}{|\Aut(S^{(\la)})||\Aut(S^{(\mu)})|}y^r z^t,
	\end{align}
	
	\begin{align}
		\widetilde{\Theta}(y,z)=\sum_{r,t\ge0}{[S^{(r)}\oplus S^{(t)}[1]]}y^r z^t,
	\end{align}

	The remainder of this section is devoted to determining the transition relations among these generating functions.

	\subsection{Euler's identity}
	
	In this subsection, $q$ can also be interpreted as a formal variable.
	
	For $n\in \N \cup \{\infty \}$, we denote by
	\[
	(x;q)_n =(1-x)(1-qx)\ldots (1-q^{n-1} x).
	\]
	We have the following identity due to Euler:
	\begin{align}
		\label{eq:Euler}
		\exp_q\Big(\frac{x}{1-q} \Big):= \sum_{r\geq0} \frac{x^r}{(1-q)(1-q^2)\ldots (1-q^r)}
		= \frac{1}{(x; q)_\infty}.
	\end{align}
	It is elementary to rewrite $\dfrac{1}{(x; q)_\infty}$ in terms of the usual exponential function:
	\[
	\frac{1}{(x; q)_\infty} = \exp \Big (\sum_{k\ge 1} \frac{x^k}{k (1-q^k)} \Big).
	\]
	Hence we have
	\begin{align}  \label{exp=exp}
		\exp_q \big(\frac{x}{1-q} \big) = \exp \Big (\sum_{k\ge 1} \frac{x^k}{k (1-q^k)} \Big).
	\end{align}
	
	\subsection{$\widetilde{E}(y,z)$ vs $\widetilde{H}(y,z)$}
	We first formulate the connection between generating functions {$\widetilde{H}(y,z)$ and $\widetilde{E}(y,z)$}.
	
	\begin{proposition}
		\label{e-h}
		We have
		\begin{align}
			\widetilde{H}(y,z)\widetilde{E}(-y,-z)=\big(\mathrm{exp}_q(\frac{yz}{1-q})\big)^2.    
		\end{align}
	\end{proposition}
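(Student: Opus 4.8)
The plan is to reduce the statement to the analogous one-variable identity for the classical Hall algebra of the Jordan quiver together with a ``cross term'' coming from the interaction between the degree-$0$ and degree-$1$ parts of the root category. Recall from Lemma~\ref{lem:commDHA} that $\cd\th(\bfk\QJ)\cong \th(\bfk\QJ)\otimes\th(\bfk\QJ)$ as algebras, where the two tensor factors are the positive part (generated by the $[S^{(\la)}]$) and the negative part (generated by the $[S^{(\mu)}[1]]$). However, the generating functions $\widetilde H(y,z)$ and $\widetilde E(y,z)$ are \emph{not} simply products of one-variable generating functions over the two factors, because the derived Hall product mixes the two sides; this mixing is exactly what the factor $\exp_q\big(\tfrac{yz}{1-q}\big)$ records. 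So the first step is to expand both $\widetilde H(y,z)$ and $\widetilde E(-y,-z)$ using Proposition~\ref{prop:dHallvsHall} (the six-Hall-number formula) to write each element $[S^{(\la)}\oplus S^{(\mu)}[1]]/(|\Aut S^{(\la)}||\Aut S^{(\mu)}|)$ in terms of products in the Hall algebra, keeping track of the $\Aut$ normalizations via the Riedtmann--Peng formula \eqref{eq:RP}.

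Concretely, I would first establish the two ``mixed'' degenerations
\[
\widetilde H(y,z)=H_1(y)\,H_2(z)\,\exp_q\!\Big(\frac{yz}{1-q}\Big),
\qquad
\widetilde E(y,z)=E_1(y)\,E_2(z)\,\exp_q\!\Big(\frac{yz}{1-q}\Big),
\]
which are part of Theorem~\ref{ThmD} (and which I would prove here as the technical core, or cite if proved earlier). Here $H_i(y),E_i(y)$ are the classical complete/elementary generating functions in the two factors. Granting these, the proposition follows by a short computation: since the positive and negative factors commute,
\[
\widetilde H(y,z)\,\widetilde E(-y,-z)
= H_1(y)E_1(-y)\cdot H_2(z)E_2(-z)\cdot \Big(\exp_q\!\big(\tfrac{yz}{1-q}\big)\Big)^2,
\]
and by the classical identity $H(y)E(-y)=1$ in each copy of $\th(\bfk\QJ)$ (equivalently $\sum_{r}(-1)^r e_r h_{n-r}=0$ for $n\ge1$; cf.\ \cite[I, (2.6)]{Mac95}), the first two factors collapse to $1$, leaving $\big(\exp_q(\tfrac{yz}{1-q})\big)^2$ as claimed.

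The bulk of the work, therefore, is the identity $\widetilde H(y,z)=H_1(y)H_2(z)\exp_q\big(\tfrac{yz}{1-q}\big)$. To prove it I would compute the coefficient of $y^mz^n$ on the left, i.e.\ $\sum_{\la\vdash m,\,\mu\vdash n}[S^{(\la)}\oplus S^{(\mu)}[1]]/(|\Aut S^{(\la)}||\Aut S^{(\mu)}|)$, and compare with the right side. Using \eqref{six-hall} with $M^\bullet$ and $N^\bullet$ playing the roles of a pure-degree-$0$ and a pure-degree-$1$ object respectively (or directly the defining derived Hall multiplication), one sees that the product $\Big(\sum_{\la}\tfrac{[S^{(\la)}]}{|\Aut S^{(\la)}|}y^{|\la|}\Big)*\Big(\sum_{\mu}\tfrac{[S^{(\mu)}[1]]}{|\Aut S^{(\mu)}|}z^{|\mu|}\Big)$ expands as a sum over a ``middle'' object $C$ that gets extended by the degree-$0$ part and ``co-extended'' into the degree-$1$ part, and the factor $\{M^\bullet,N^\bullet\}=|\Hom_\ca(M_0,N_0)||\Hom_\ca(M_1,N_1)|$ from \eqref{eq:bilinear-stalk} contributes the $\exp_q$ term after summing over $C$. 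The clean way to organize this is: the coefficient bookkeeping, after dividing by the $\Aut$-groups, turns into the classical Hall-algebra identity $\sum_{[L]}\tfrac{[L]}{|\Aut L|}=\Big(\sum_{[C]}\tfrac{1}{|\Aut C|}\Big)\cdot\Big(\text{generating series in }\th\Big)$, and $\sum_{[C]\in\Iso(\ca)} q^{-\dim}/|\Aut C|$-type sums evaluate to $\exp_q(\tfrac{\bullet}{1-q})$ by Euler's identity \eqref{eq:Euler} together with \eqref{eq:aut}.

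\textbf{Main obstacle.} The delicate point is the precise normalization: matching the powers of $q$ (or $\sqq$) in the definitions \eqref{gen1}--\eqref{def:p} of $\widetilde E,\widetilde H,\widetilde\Theta$ against the automorphism-group factors $|\Aut(S^{(\la)})|=q^{|\la|+2n(\la)}b_\la(q^{-1})$ from \eqref{eq:aut} and the $\{M^\bullet,N^\bullet\}$ denominator in \eqref{eq:derhallnum}, so that the cross term assembles \emph{exactly} into $\exp_q(\tfrac{yz}{1-q})$ rather than some rescaled variant. I expect the cleanest route is to first prove the $\widetilde H$ and $\widetilde E$ factorizations over $\Q(\sqq)$ directly from the definition of the derived Hall product restricted to stalk complexes (using \eqref{sdh1}--\eqref{sdh2} or \eqref{six-hall} with three of the six Hall numbers trivially equal to $\delta$-functions, since $\Hom$ and $\Ext^1$ between the degree-$0$ object and degree-$1$ object are the only nontrivial interaction), and then derive the present proposition purely formally as in the short computation above.
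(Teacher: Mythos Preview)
Your approach is correct but genuinely different from the paper's. The paper expands $\widetilde H(y,z)\widetilde E(-y,-z)$ directly via the six-Hall-number formula \eqref{six-hall}, uses Riedtmann--Peng to collapse the sums over $\la,\mu$, and then shows that the coefficient of $[S^{(\rho)}\oplus S^{(\nu)}[1]]$ vanishes whenever $\rho\neq\emptyset$ or $\nu\neq\emptyset$ by invoking an alternating-sum identity from \cite[Proposition~6.3]{LRW25}; only the $\rho=\nu=\emptyset$ terms survive and assemble into $\big(\exp_q(\tfrac{yz}{1-q})\big)^2$. Your route instead first proves the factorizations $\widetilde H=H_1H_2\exp_q(\tfrac{yz}{1-q})$ and $\widetilde E=E_1E_2\exp_q(\tfrac{yz}{1-q})$ and then reduces to the classical $H(y)E(-y)=1$.

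Two comments. First, be aware that in the paper's logical flow the factorizations (Theorem~\ref{prop:transition}, via Proposition~\ref{prop:EHP}) are obtained \emph{using} Proposition~\ref{e-h}, so you cannot cite them; you must prove them independently, as you indicate. This is indeed feasible: applying \eqref{six-hall} to $[S^{(\la)}]*[S^{(\mu)}[1]]$ (where four of the six Hall numbers degenerate to Kronecker deltas) and summing with the Riedtmann--Peng normalization gives $H_1(y)H_2(z)=\widetilde H(y,z)\cdot\sum_{[A]}(yz)^{|A|}/|\Aut(A)|$, and the latter sum equals $(yz;q)_\infty=\exp_q(\tfrac{yz}{1-q})^{-1}$ by the nilpotent-matrix count and Euler's second identity. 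The $\widetilde E$ case is analogous but with Gaussian binomials. Second, what each approach buys: yours is more conceptual, explains the square on $\exp_q$ as one factor from each of $\widetilde H$ and $\widetilde E(-y,-z)$, and avoids the external vanishing lemma from \cite{LRW25}; the paper's direct computation is more self-contained within the derived Hall framework and establishes Proposition~\ref{e-h} as the logically prior statement from which the factorizations are then deduced.
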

	
	\begin{proof}
		Using \eqref{six-hall}, we obtain
		\begin{align*}
			&\widetilde{H}(y,z)\widetilde{E}(-y,-z)
			\\&= \sum_{a,b\ge0}\sum_{r,t\ge0}\sum_{\la\vdash r,\mu\vdash t}(-1)^{a+b}\sqq^{a(a-1)+b(b-1)}\frac{[S^{(1^a)}\oplus S^{(1^b)}[1]]}{|\Aut(S^{(1^a)})||\Aut(S^{(1^b)})|}
			\\&\qquad*  \frac{[S^{(\la)}\oplus S^{(\mu)}[1]]}{|\Aut(S^{(\la)})||\Aut(S^{(\mu)})|} y^{a+r} z^{b+t}
			\\&= \sum_{a,b\ge0}\sum_{r,t\ge0}\sum_{\la\vdash r,\mu\vdash t}(-1)^{a+b}\sqq^{a(a-1)+b(b-1)}\sum_{d=0}^b\sum_{c=0}^a \sum_{\rho,\nu,\alpha,\beta\in\cp} F_{(1^d), \alpha}^{\la}(q)F_{\alpha, (1^{a-c})}^{\rho}(q)F_{(1^{a-c}), (1^c)}^{(1^a)}(q)
			\\
			&\qquad \times F_{(1^c), \beta}^{\mu}(q)F_{\beta, (1^{b-d})}^{\nu}(q)F_{(1^{b-d}), (1^d)}^{(1^b)}(q)
			\dfrac{|\Aut(S^{(1^d)})||\Aut(S^{(\alpha)})||\Aut(S^{(1^{a-c})})|}{|\Aut(S^{(\rho)})||\Aut(S^{(\nu)})|} \\
			&\qquad \times|\Aut(S^{(1^{c})})||\Aut(S^{(\beta)})||\Aut(S^{(1^{b-d})})|\cdot [S^{(\rho)}\oplus S^{(\nu)}[1]] y^{a+r} z^{b+t}.
		\end{align*}
		
		Due to the Riedtmann-Peng formula, we deduce that\[
		\sum_{\la\vdash r}F_{(1^{d}),\alpha}^\la\frac{|\Aut(S^{({\alpha})})|}{|\Aut(S^{({\la})})|}=\frac{1}{|\Aut(S^{({1^d})})|},\quad\sum_{\mu\vdash t}F_{(1^{c}),\beta}^\mu\frac{|\Aut(S^{({\beta})})|}{|\Aut(S^{({\mu})})|}=\frac{1}{|\Aut(S^{({1^c})})|}.
		\]
		Hence, 
		\begin{align*}
			&\widetilde{H}(y,z)\widetilde{E}(-y,-z)
			\\
			&= \sum_{a,b\ge0}\sum_{r,t\ge0}(-1)^{a+b}\sqq^{a(a-1)+b(b-1)}\sum_{d=0}^b\sum_{c=0}^a \sum_{\rho,\nu}\sum_{\alpha\vdash (r-d)}\sum_{\beta\vdash (t-c)} F_{\alpha, (1^{a-c})}^{\rho}(q)F_{(1^{a-c}), (1^c)}^{(1^a)}(q)\\
			&\quad\times F_{\beta, (1^{b-d})}^{\nu}(q)F_{(1^{b-d}),(1^d)}^{(1^b)}(q)
			\dfrac{|\Aut(S^{(1^{a-c})})||\Aut(S^{(1^{b-d})})|}{|\Aut(S^{(1^a)})||\Aut(S^{(1^b)})||\Aut(S^{(\rho)})||\Aut(S^{(\nu)})|} \\
			&\quad \times [S^{(\rho)}\oplus S^{(\nu)}[1]] y^{a+r} z^{b+t}
			\\=& \sum_{a,b,r,t\ge0}(-1)^{a+b}\sqq^{a(a-1)+b(b-1)}\sum_{d=0}^b\sum_{c=0}^a \sum_{\rho,\nu}\sum_{\alpha\vdash (r-d)}\sum_{\beta\vdash (t-c)} F_{\alpha, (1^{a-c})}^{\rho}(q)F_{\beta ,(1^{b-d})}^{\nu}(q) \sqq^{c(a-c)+d(b-d)}
			\\
			&\quad \times\begin{bmatrix}
				a\\c
			\end{bmatrix}_\sqq \begin{bmatrix}
				b\\d
			\end{bmatrix}_\sqq
			\dfrac{|\Aut(S^{(1^{a-c})})||\Aut(S^{(1^{b-d})})|}{|\Aut(S^{(1^a)})||\Aut(S^{(1^b)})||\Aut(S^{(\rho)})||\Aut(S^{(\nu)})|} [S^{(\rho)}\oplus S^{(\nu)}[1]] y^{a+r} z^{b+t}.
		\end{align*}
		In the last equality, we use the formulas
		\[
		F_{(1^{a-c}),(1^c)}^{(1^a)}(q)=\sqq^{c(a-c)}\begin{bmatrix}
			a\\c
		\end{bmatrix}_\sqq,\quad F_{(1^{b-d}),(1^d)}^{(1^b)}(q)=\sqq^{d(b-d)}\begin{bmatrix}
			b\\d
		\end{bmatrix}_\sqq.
		\]
		
		Fix $\rho,\nu$ and $a+r=e,b+t=f$ in the following. The proof is divided into the following three cases.
		
		Case \underline{$\rho=\nu=\emptyset$}. Then $\alpha=\beta=\emptyset$ and $b=d=r,a=c=t$. The coefficient of $y^{e}z^{e}$ is equal to
		\begin{align*}
			&\sum_{c+d=e}\frac{(-1)^{c+d}\sqq^{c(c-1)+d(d-1)} }{|\Aut(S^{(1^{c})})||\Aut(S^{(1^{d})})|}
			=\sum_{c+d=e}\frac{1}{(1-q)\cdots(1-q^c)}\cdot\frac{1}{(1-q)\cdots(1-q^d)}.
		\end{align*}
		
		Case \underline{$\rho\ne \emptyset$}. 
		We also fix $c,d$. Then $F_{\alpha ,(1^{a-c})}^\rho(q)\ne0$ implies that $|\rho|+c+d=a+r$ and $c\le a\le \ell(\rho)+c$. Therefore, the coefficient of $\dfrac{[S^{(\rho)}\oplus S^{(\nu)}[1]]}{|\Aut(S^{({\rho})})||\Aut(S^{({\nu})})|}y^{e}z^{f}$ is equal to 
		
		\begin{align*}
			& \sum_{a+r=e}\sum_{b+t=f}(-1)^{a+b}\sqq^{a(a-1)+b(b-1)}\sum_{\alpha\vdash (r-d)}\sum_{\beta\vdash (t-c)} F_{\alpha, (1^{a-c})}^{\rho}(q)F_{\beta ,(1^{b-d})}^{\nu}(q) 
			\\&\qquad\qquad\times\sqq^{c(a-c)+d(b-d)}\begin{bmatrix}
				a\\c
			\end{bmatrix}_\sqq \begin{bmatrix}
				b\\d
			\end{bmatrix}_\sqq\cdot \dfrac{|\Aut(S^{(1^{a-c})})||\Aut(S^{(1^{b-d})})|}{|\Aut(S^{(1^a)})||\Aut(S^{(1^b)})|} 
			\\&=\sum_{b+t=f}(-1)^b \sqq^{b(b-1)}\sqq^{d(b-d)}\begin{bmatrix}
				b\\d
			\end{bmatrix}_\sqq \sum_{\beta}F^\nu_{\beta, (1^{b-d})}(q)\frac{|\Aut(S^{(1^{b-d})})|}{|\Aut(S^{(1^{b})})|}
			\\&\qquad\qquad\times\sum_{c\le a\le \ell(\rho)+c}(-1)^a \sqq^{a(a-1)}\sqq^{c(a-c)}\begin{bmatrix}
				a\\c
			\end{bmatrix}_\sqq \sum_{\alpha}F^\rho_{\alpha ,(1^{a-c})}\frac{|\Aut(S^{(1^{a-c})})|}{|\Aut(S^{(1^{a})})|}
			\\&=0,
		\end{align*}
		where the last equality follows from
		\begin{align*}
			\sum_{c\le a\le \ell(\rho)+c}(-1)^a \sqq^{a(a-1)}\sqq^{c(a-c)}\begin{bmatrix}
				a\\c
			\end{bmatrix}_\sqq \sum_{\alpha}F^\rho_{\alpha ,(1^{a-c})}\frac{|\Aut(S^{(1^{a-c})})|}{|\Aut(S^{(1^{a})})|}=0,
		\end{align*}
		deduced from the proof of \cite[Proposition 6.3]{LRW25}.
		
		Case \underline{$\nu\ne\emptyset$}. It is similar to the second case, hence omitted here.
		
		Therefore, we have \begin{align*}
			\widetilde{H}(y,z)\widetilde{E}(-y,-z)&=\sum_{e\ge0}\sum_{c+d=e}\frac{1}{(1-q)\cdots(1-q^c)}\cdot\frac{1}{(1-q)\cdots(1-q^d)}y^e z^e
			\\\notag&=\big(\mathrm{exp}_q(\frac{yz}{1-q})\big)^2. 
		\end{align*}
		The proof is completed.
	\end{proof}
	
	\subsection{$\widetilde{E}(y,z)$ or $\widetilde{H}(y,z)$ vs $\widetilde{\Theta}(y,z)$}
	
	We need some preparation before formulating the relations among $\widetilde{E}(y,z)$, $\widetilde{H}(y,z)$ and $\widetilde{\Theta}(y,z)$.
	
	By the formula \eqref{six-hall}, we obtain that
	\begin{align*}
		&[S^{(r)}\oplus S^{(t)}[1]]*[S^{(1^a)}\oplus S^{(1^b)}[1]]
		\\
		&=\sum_{d=0,1}\sum_{c=0,1} \sum_{\rho,\nu,\alpha,\beta\in\cp} F_{(1^d), \alpha}^{(r)}(q)F_{\alpha ,(1^{a-c})}^{\rho}(q)F_{(1^{a-c}),( 1^c)}^{(1^a)}(q)
		F_{(1^c), \beta}^{(t)}(q)F_{\beta ,(1^{b-d})}^{\nu}(q)F_{(1^{b-d}), (1^d)}^{(1^b)}(q)
		\\
		&\qquad\times\dfrac{|\Aut(S^{(1^d)})||\Aut(S^{(\alpha)})||\Aut(S^{(1^{a-c})})||\Aut(S^{(1^{c})})||\Aut(S^{(\beta)})||\Aut(S^{(1^{b-d})})|}{|\Aut(S^{(\rho)})||\Aut(S^{(\nu)})|}\\
		&\qquad \times [S^{(\rho)}\oplus S^{(\nu)}[1]].
	\end{align*}
	Note that both $c$ and $d$ take values in $\{0,1\}$. For fixed $c$ and $d$, both $\rho$ and $\nu$ admit exactly two choices, which arise respectively from the trivial and nontrivial extensions of the pairs $(S^{(\alpha)}, S^{(1^{a-c})})$ and $(S^{(\beta)}, S^{(1^{b-d})})$. To systematically enumerate all possible cases, we present Table \ref{tab1}.
	
	\begin{table}
		\begin{tabular}{|c|c|c|c|c|c|}
			\hline
			&$c$ & $d$ & $\rho$ & $\nu$& The coefficient of $[S^{(\rho)}\oplus S^{(\nu)}[1]]$\\\hline
			a)&$0$ & $0$ & $(r,1^a)$ & $(t,1^b)$ 
			&$q^{-a-b}$
			\\
			\hline
			b)&$0$ & $0$ & $(r+1,1^{a-1})$ & $(t,1^b)$ 
			&$(1-q^{-a})q^{-b}$\\
			\hline
			c)&$0$ & $0$ & $(r,1^a)$ & $(t+1,1^{b-1})$ 
			&$q^{-a}(1-q^{-b})$
			\\
			\hline
			d)&$0$ & $0$ & $(r+1,1^{a-1})$ & $(t+1,1^{b-1})$ 
			&$(1-q^{-a})\cdot(1-q^{-b})$
			\\
			\hline
			e)&$1$ & $0$ & $(r,1^{a-1})$ & $(t-1,1^b)$ 
			&$(q^{a}-1)q^{1-a-b}$\\\hline
			f)&$1$ & $0$ & $(r+1,1^{a-2})$ & $(t-1,1^b)$ 
			&$(q^{a}-1)(1-q^{1-a})q^{-b}$\\\hline
			g)&$1$ & $0$ & $(r,1^{a-1})$ & $(t,1^{b-1})$ 
			&$(q^{a}-1)q^{1-a}(1-q^{-b})$\\\hline
			h)&$1$ & $0$ & $(r+1,1^{a-2})$ & $(t,1^{b-1})$ 
			&$(q^{a}-1)(1-q^{1-a})(1-q^{-b})$\\\hline
			i)&$0$ & $1$ & $(r-1,1^{a})$ & $(t,1^{b-1})$ 
			&$(q^{b}-1)q^{1-a-b}$\\\hline
			j)&$0$ & $1$ & $(r,1^{a-1})$ & $(t,1^{b-1})$ 
			&$(q^{b}-1)q^{1-b}(1-q^{-a})$\\\hline
			j)&$0$ & $1$ & $(r-1,1^{a})$ & $(t+1,1^{b-2})$ 
			&$(q^{b}-1)q^{-a}(1-q^{1-b})$\\\hline
			l)&$0$ & $1$ & $(r,1^{a-1})$ & $(t+1,1^{b-2})$ 
			&$(q^{b}-1)(1-q^{-a})(1-q^{1-b})$\\\hline
			m)&$1$ & $1$ & $(r-1,1^{a-1})$ & $(t-1,1^{b-1})$ 
			&$(q^{b}-1)(q^{a}-1)q^{2-a-b}$\\\hline
			n)&$1$ & $1$ & $(r,1^{a-2})$ & $(t-1,1^{b-1})$ 
			&$(q^{b}-1)(q^{a}-1)q^{1-b}(1-q^{1-a})$\\\hline
			o)&$1$ & $1$ & $(r-1,1^{a-1})$ & $(t,1^{b-2})$ 
			&$(q^{b}-1)(q^{a}-1)q^{1-a}(1-q^{1-b})$\\\hline
			p)&$1$ & $1$ & $(r,1^{a-2})$ & $(t,1^{b-2})$ 
			&$(q^{b}-1)(q^{a}-1)(1-q^{1-a})(1-q^{1-b})$\\
			\hline
		\end{tabular}
		\vspace{0.5cm}
		\caption{Computation of $[S^{(r)}\oplus S^{(t)}[1]]*[S^{(1^a)}\oplus S^{(1^b)}[1]]$}
		\label{tab1}
	\end{table}

	\begin{proposition}
		\label{theta-e-h}
		We have
		\begin{align}
			\label{theta-e}
			\widetilde{\Theta}(y,z)&=(1-qyz)^2\frac{\widetilde{E}(-y,-z)}{\widetilde{E}(-qy,-qz)},
			\\
			\label{theta-h}
			\widetilde{\Theta}(y,z)&={(1-yz)^{-2}}\frac{\widetilde{H}(qy,qz)}{\widetilde{H}(y,z)}.
		\end{align}
	\end{proposition}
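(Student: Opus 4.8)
The plan is to prove only \eqref{theta-e} by a direct Hall-algebra computation and then deduce \eqref{theta-h} from it, since the two displayed formulas have the same right-hand side. For the reduction: Proposition~\ref{e-h} can be rewritten as $\widetilde{H}(y,z)=\big(\exp_q(\tfrac{yz}{1-q})\big)^{2}\,\widetilde{E}(-y,-z)^{-1}$ (legitimate because $\widetilde{E}$ has constant term $[0]=1$); evaluating this at $(qy,qz)$ and dividing by the same identity at $(y,z)$ gives
\[
\frac{\widetilde{H}(qy,qz)}{\widetilde{H}(y,z)}=\left(\frac{\exp_q(q^{2}yz/(1-q))}{\exp_q(yz/(1-q))}\right)^{2}\;\frac{\widetilde{E}(-y,-z)}{\widetilde{E}(-qy,-qz)},
\]
and by Euler's identity \eqref{eq:Euler}, $\exp_q(\tfrac{x}{1-q})=(x;q)_{\infty}^{-1}$, so the first factor equals $\big(\tfrac{(yz;q)_{\infty}}{(q^{2}yz;q)_{\infty}}\big)^{2}=(1-yz)^{2}(1-qyz)^{2}$. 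Hence $(1-yz)^{-2}\tfrac{\widetilde{H}(qy,qz)}{\widetilde{H}(y,z)}=(1-qyz)^{2}\tfrac{\widetilde{E}(-y,-z)}{\widetilde{E}(-qy,-qz)}$, which combined with \eqref{theta-e} is exactly \eqref{theta-h}.

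\textbf{Proof of \eqref{theta-e}.} Since $\cd\widetilde{\ch}(\bfk\QJ)$ is commutative and $\widetilde{E}(-qy,-qz)$ has invertible constant term, \eqref{theta-e} is equivalent to $\widetilde{\Theta}(y,z)*\widetilde{E}(-qy,-qz)=(1-qyz)^{2}\,\widetilde{E}(-y,-z)$. First I would write the left-hand side as
\[
\sum_{r,t,a,b\ge0}(-1)^{a+b}q^{a+b}\,\frac{\sqq^{a(a-1)+b(b-1)}}{|\Aut(S^{(1^{a})})|\,|\Aut(S^{(1^{b})})|}\,\big([S^{(r)}\oplus S^{(t)}[1]]*[S^{(1^{a})}\oplus S^{(1^{b})}[1]]\big)\,y^{r+a}z^{t+b}
\]
and substitute the sixteen contributions a)--p) of Table~\ref{tab1}. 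Using \eqref{eq:aut}, in particular $\sqq^{a(a-1)}/|\Aut(S^{(1^{a})})|=1/\prod_{i=1}^{a}(q^{i}-1)$ (so that $\widetilde{E}(-y,-z)=\sum_{m,n\ge0}\tfrac{1}{(q;q)_{m}(q;q)_{n}}\,[S^{(1^{m})}\oplus S^{(1^{n})}[1]]\,y^{m}z^{n}$), I would fix a bipartition of hook type $(\rho,\nu)$ and collect the coefficient of $[S^{(\rho)}\oplus S^{(\nu)}[1]]$, which reduces to a finite sum of products of Gaussian binomials. It then remains to check two statements: (i) this coefficient vanishes whenever $\rho$ or $\nu$ has a row of length $>1$; and (ii) for columns $(\rho,\nu)=((1^{m}),(1^{n}))$ the coefficient of $y^{P}z^{Q}[S^{(1^{m})}\oplus S^{(1^{n})}[1]]$ equals $\tfrac{1}{(q;q)_{m}(q;q)_{n}}$, $\tfrac{-2q}{(q;q)_{m}(q;q)_{n}}$, $\tfrac{q^{2}}{(q;q)_{m}(q;q)_{n}}$, or $0$ according as $(P-m,Q-n)$ is $(0,0)$, $(1,1)$, $(2,2)$, or anything else --- which is precisely the corresponding coefficient of $(1-qyz)^{2}\widetilde{E}(-y,-z)$. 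The vanishing in (i) is the same type of $q$-binomial cancellation already used in the proof of Proposition~\ref{e-h} (ultimately \cite[Proposition 6.3]{LRW25}); (ii) is a bookkeeping of the surviving terms of Table~\ref{tab1}.

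\textbf{Main obstacle.} The hard part will be the regrouping underlying (i) and (ii): Table~\ref{tab1} has sixteen cases, several of which feed the same target $[S^{(\rho)}\oplus S^{(\nu)}[1]]$ through different quadruples $(r,t,a,b)$, so the sums must be organized --- separating the choices $c,d\in\{0,1\}$ (whether a $1$-column is absorbed into a row of $\rho$ or of $\nu$) from the distribution of the remaining boxes --- before the telescoping and $q$-binomial identities can be applied; this is parallel to, but more intricate than, the argument for Proposition~\ref{e-h}. A cleaner, essentially equivalent alternative I could follow instead is to first establish the factorizations $\widetilde{\Theta}(y,z)=\tfrac{1-qyz}{1-yz}\,\Theta_{1}(y)\Theta_{2}(z)$ and $\widetilde{E}(y,z)=\exp_q(\tfrac{yz}{1-q})\,E_{1}(y)E_{2}(z)$ --- each coming from an elementary product computation via \eqref{six-hall}, e.g. $[S^{(r)}]*[S^{(t)}[1]]=\sum_{k\ge0}|\Aut(S^{(k)})|\,[S^{(r-k)}\oplus S^{(t-k)}[1]]$ together with its Gaussian-binomial analogue for columns --- and then reduce \eqref{theta-e} to the classical single-variable identity $\Theta_{i}(y)=E_{i}(-y)/E_{i}(-qy)$ in $\widetilde{\ch}(\bfk\QJ)$.
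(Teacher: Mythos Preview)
Your main proposal is essentially the paper's proof: expand $\widetilde{\Theta}(y,z)*\widetilde{E}(-qy,-qz)$ using Table~\ref{tab1}, show only column targets $[S^{(1^{m})}\oplus S^{(1^{n})}[1]]$ survive with coefficients matching $(1-qyz)^{2}\widetilde{E}(-y,-z)$, then derive \eqref{theta-h} from \eqref{theta-e} via Proposition~\ref{e-h} and Euler's identity. One small correction: the vanishing in your step (i) is not the same cancellation as in Proposition~\ref{e-h} (which sums over all $a$ for fixed $\rho$ and invokes \cite[Proposition~6.3]{LRW25}); here only finitely many quadruples $(r,t,a,b)$ hit a given hook target, and the paper disposes of the case $r>1$ or $t>1$ by direct inspection of the Table~\ref{tab1} entries rather than by that lemma.

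Your alternative route at the end is genuinely different from the paper's argument and worth a remark. Establishing $\widetilde{\Theta}(y,z)=\tfrac{1-qyz}{1-yz}\,\Theta_{1}(y)\Theta_{2}(z)$ and $\widetilde{E}(y,z)=\exp_{q}\!\big(\tfrac{yz}{1-q}\big)E_{1}(y)E_{2}(z)$ directly from the single products $[S^{(r)}]*[S^{(t)}[1]]$ and $[S^{(1^{a})}]*[S^{(1^{b})}[1]]$ (each a one-line application of \eqref{six-hall}), and then quoting the classical identity $\Theta_{i}(y)=E_{i}(-y)/E_{i}(-qy)$, would give \eqref{theta-e} without the sixteen-case bookkeeping. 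In the paper these factorizations appear only later as Theorem~\ref{prop:transition}, and are proved \emph{via} Proposition~\ref{theta-e-h} and the machinery of \S4.4--4.5; your alternative reverses this dependence, making Proposition~\ref{theta-e-h} and Theorem~\ref{prop:transition} simultaneous corollaries of two short product formulas. The paper's approach has the virtue of exhibiting the cancellations explicitly in the derived Hall basis; yours is shorter and isolates exactly where the ``double'' phenomenon enters (the scalar factor in front of the product of classical generating functions).
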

	
	\begin{proof}
		First, let us prove \eqref{theta-e}. By definition, we have the following formula: 
		\begin{align*}
			&\widetilde{\Theta}(y,z)\widetilde{E}(-qy,-qz)\\&=\sum_{a,b\ge 0}\sum_{r,t\ge 0}(-q)^{a+b}\sqq^{a(a-1)+b(b-1)}\frac{[S^{(r)}\oplus S^{(t)}[1]]*[S^{(1^a)}\oplus S^{(1^b)}[1]]}{|\Aut(S^{(1^{a})})||\Aut(S^{(1^b)})|}y^{a+r}z^{b+t}.
		\end{align*}
		Observe that only the terms $[S^{(r,1^{a})}\oplus S^{(t,1^{b})}[1]]y^{a+r+k}z^{b+t+k}$, for $k=0,1,2$
        % \red{By Table \ref{tab1}, only the terms $[S^{(r',1^{a'})}\oplus S^{(t',1^{b'})}[1]]y^{a'+r'}z^{b'+t'}$, $[S^{(r',1^{a'})}\oplus S^{(t',1^{b'})}[1]]y^{a'+r'+1}z^{b'+t'+1}$ and $[S^{(r',1^{a'})}\oplus S^{(t',1^{b'})}[1]]y^{a'+r'+2}z^{b'+t'+2}$} %\red{(note that the numbers $a,b,r,t$ are not the same as that in Table \ref{tab1})}
			can appear on the right-hand side which come from a)--d), e)--l) and m)--p) in Table \ref{tab1}, respectively.
			Moreover, their coefficients are equal to $0$ if $r>1$ or $t>1$ by a simple computation.
		Hence it remains to calculate  coefficients of $[S^{(1^a)}\oplus S^{(1^b)}[1]]y^{a+k} z^{b+k}$ for $k=0,1,2$, which comes from $[S^{(i)}\oplus S^{(j)}[1]]*[S^{(1^{a+k-i})}\oplus S^{(1^{b+k-j})}[1]]$ for certain $(i,j)$'s.
        % $[S^{(1^a)}\oplus S^{(1^b)}[1]]y^a z^b$, %(it is $[S^{(1^a)}\oplus S^{(1^b)}[1]]y^a z^b$ in previous paragraph), 
			% $[S^{(1^a)}\oplus S^{(1^b)}[1]]y^{a+1} z^{b+1}$ and $[S^{(1^{a})}\oplus S^{(1^b)}[1]]y^{a+2} z^{b+2}$ (for convenience, they are also parameterized by $a,b$ instead of $a',b'$). In this case, for any $a,b\ge0$ (the case $a=0,b=0$ is trivial), 
            
            If $k=0$,
             then $(i,j)=(0,0),(1,0), (0,1)$ or $(1,1)$.
             %the term $[S^{(1^a)}\oplus S^{(1^b)}[1]]y^a z^b$ comes from {\red{$[S^{(i)}\oplus S^{(j)}[1]]*[S^{(1^{a-i})}\oplus S^{(1^{b-j})}[1]],$  where }}
             % then $(i,j)=(0,0),(1,0), (0,1)$ or $(1,1)$ respectively.
		% $[S^{(0)}\oplus S^{(0)}[1]]*[S^{(1^a)}\oplus S^{(1^b)}[1]]$, $[S^{(1)}\oplus S^{(0)}[1]]*[S^{(1^{a-1})}\oplus S^{(1^b)}[1]]$, $[S^{(0)}\oplus S^{(1)}[1]]*[S^{(1^{a})}\oplus S^{(1^{b-1})}[1]]$ and $[S^{(1)}\oplus S^{(1)}[1]]*[S^{(1^{a-1})}\oplus S^{(1^{b-1})}[1]]$. 
        So its coefficient equals to
		\begin{align*}
			&(-q)^{a+b}\sqq^{a(a-1)+b(b-1)}\frac{1}{|\Aut(S^{(1^{a})})||\Aut(S^{(1^b)})|}\\&
			+(-q)^{a+b-1}\sqq^{(a-1)(a-2)+b(b-1)}\frac{q^{1-a}}{|\Aut(S^{(1^{a-1})})||\Aut(S^{(1^b)})|}
			\\&+(-q)^{a+b-1}\sqq^{(a-1)a+(b-1)(b-2)}\frac{q^{1-b}}{|\Aut(S^{(1^{a})})||\Aut(S^{(1^{b-1})})|}
			\\&+(-q)^{a+b-2}\sqq^{(a-1)(a-2)+(b-1)(b-2)}\frac{q^{2-a-b}}{|\Aut(S^{(1^{a-1})})||\Aut(S^{(1^{b-1})})|}
			\\&=(-1)^{a+b}\frac{\sqq^{a(a-1)+b(b-1)}}{{|\Aut(S^{(1^{a})})||\Aut(S^{(1^b)})|}}.
		\end{align*}
		If $k=2$,
        then $(i,j)=(1,1),(2,1), (1,2)$ or $(2,2)$.
        %the term $[S^{(1^a)}\oplus S^{(1^b)}[1]]y^{a+2} z^{b+2}$ comes from {\red{$[S^{(i)}\oplus S^{(j)}[1]]*[S^{(1^{a+2-i})}\oplus S^{(1^{b+2-j})}[1]],$  where $(i,j)=(1,1),(2,1), (1,2)$ or $(2,2)$ respectively.}}
       % $[S^{(1)}\oplus S^{(1)}[1]]*[S^{(1^{a+1})}\oplus S^{(1^{b+1})}[1]]$, $[S^{(2)}\oplus S^{(1)}[1]]*[S^{(1^{a})}\oplus S^{(1^{b+1})}[1]]$, $[S^{(1)}\oplus S^{(2)}[1]]*[S^{(1^{a+1})}\oplus S^{(1^{b})}[1]]$ and $[S^{(2)}\oplus S^{(2)}[1]]*[S^{(1^{a})}\oplus S^{(1^{b})}[1]]$. 
       So its coefficient  equals to
		\begin{align*}
			&(-q)^{a+b+2}\sqq^{a(a+1)+b(b+1)}\frac{(q^{a+1}-1)(q^{b+1}-1)}{{|\Aut(S^{(1^{a+1})})||\Aut(S^{(1^{b+1})})|}}
			\\&+(-q)^{a+b+1}\sqq^{a(a-1)+b(b+1)}\frac{(q-q^{1-a})(q^{b+1}-1)}{{|\Aut(S^{(1^{a})})||\Aut(S^{(1^{b+1})})|}}
			\\&+(-q)^{a+b+1}\sqq^{a(a+1)+b(b-1)}\frac{(q^{a+1}-1)(q-q^{1-b})}{{|\Aut(S^{(1^{a+1})})||\Aut(S^{(1^{b})})|}}
			\\&+(-q)^{a+b}\sqq^{a(a-1)+b(b-1)}\frac{(q-q^{1-a})(q-q^{1-b})}{{|\Aut(S^{(1^{a})})||\Aut(S^{(1^{b})})|}}
			\\&=(-1)^{a+b+2}\dfrac{q^2\sqq^{a(a-1)+b(b-1)}}{{|\Aut(S^{(1^{a})})||\Aut(S^{(1^b)})|}}.
		\end{align*}
		If $k=1$, 
        %the term $[S^{(1^a)}\oplus S^{(1^b)}[1]]y^{a+1} z^{b+1}$ comes from
        % {\red{$[S^{(i)}\oplus S^{(j)}[1]]*[S^{(1^{a+1-i})}\oplus S^{(1^{b+1-j})}[1]],$  where}} 
        then $(i,j)=(0,1),(1,1), (0,2)$ or $(1,2)$ while $c=1, d=0$; and $(i,j)=(1,0),(1,1), (2,0)$ or $(2,1)$ while $c=0, d=1$ in Table \ref{tab1}, respectively.
		% \begin{align*}
		% 	&[S^{(0)}\oplus S^{(1)}[1]]*[S^{(1^{a+1})}\oplus S^{(1^{b})}[1]],\ [S^{(1)}\oplus S^{(1)}[1]]*[S^{(1^{a})}\oplus S^{(1^{b})}[1]]\red{(c=1,d=0)},\ \\&[S^{(0)}\oplus S^{(2)}[1]]*[S^{(1^{a+1})}\oplus S^{(1^{b-1})}[1]],\ [S^{(1)}\oplus S^{(2)}[1]]*[S^{(1^{a})}\oplus S^{(1^{b-1})}[1]],\\&
		% 	[S^{(1)}\oplus S^{(0)}[1]]*[S^{(1^{a})}\oplus S^{(1^{b+1})}[1]],\ [S^{(1)}\oplus S^{(1)}[1]]*[S^{(1^{a})}\oplus S^{(1^{b})}[1]](c=0,d=1),\ \\&[S^{(2)}\oplus S^{(0)}[1]]*[S^{(1^{a-1})}\oplus S^{(1^{b+1})}[1]],\ [S^{(2)}\oplus S^{(1)}[1]]*[S^{(1^{a-1})}\oplus S^{(1^{b})}[1]],
		% \end{align*}
		%and hence, 
        By a similar calculation, its coefficient is equal to
		\begin{align*}
			(-1)^{a+b+1}\frac{2q\sqq^{a(a-1)+b(b-1)}}{{|\Aut(S^{(1^{a})})||\Aut(S^{(1^b)})|}}.
		\end{align*}
		Therefore, we have
		\begin{align*}
			\widetilde{\Theta}(y,z)\widetilde{E}(-qy,-qz)&=\sum_{a,b\ge0} (-1)^{a+b}\frac{\sqq^{a(a-1)+b(b-1)}}{{|\Aut(S^{(1^{a})})||\Aut(S^{(1^b)})|}}[S^{(1^a)}\oplus S^{(1^b)}[1]]y^az^b
			\\&\quad+\sum_{a,b\ge0} (-1)^{a+b+2}\frac{q^2\sqq^{a(a-1)+b(b-1)}}{{|\Aut(S^{(1^{a})})||\Aut(S^{(1^b)})|}}[S^{(1^a)}\oplus S^{(1^b)}[1]]y^{a+2}z^{b+2}
			\\&\quad+\sum_{a,b\ge0} (-1)^{a+b+1}\frac{2q\sqq^{a(a-1)+b(b-1)}}{{|\Aut(S^{(1^{a})})||\Aut(S^{(1^b)})|}}[S^{(1^a)}\oplus S^{(1^b)}[1]]y^{a+1}z^{b+1}
			\\&=(1+q^2y^2z^2-2qyz)\widetilde{E}(-y,-z)
			\\&=(1-qyz)^2\widetilde{E}(-y,-z).
		\end{align*}
		
		For \eqref{theta-h}, we make use of the Proposition \ref{e-h}.
		Hence, we deduce that
		\begin{align*}
			\widetilde{\Theta}(y,z)&=(1-qyz)^2\frac{\widetilde{E}(-y,-z)}{\widetilde{E}(-qy,-qz)}\\&=
			(1-qyz)^2\frac{\widetilde{E}(-y,-z)}{\widetilde{E}(-qy,-qz)}\frac{\widetilde{H}(y,z)}{\widetilde{H}(qy,qz)}\frac{\widetilde{H}(qy,qz)}{\widetilde{H}(y,z)}
			\\&=
			(1-qyz)^2{(\exp_q{(\frac{yz}{1-q})})^2}({\exp_q{(\frac{q^2yz}{1-q})})^{-2}}
			\frac{\widetilde{H}(qy,qz)}{\widetilde{H}(y,z)}
			\\&=
			(1-qyz)^2(1-yz)^{-2}(1-qyz)^{-2}
			\frac{\widetilde{H}(qy,qz)}{\widetilde{H}(y,z)}
			\\&=
			(1-yz)^{-2}
			\frac{\widetilde{H}(qy,qz)}{\widetilde{H}(y,z)},
		\end{align*}
		where in the fourth equation we use the equation\[
		\exp_q{(\frac{x}{1-q})}=(1-x)^{-1}\exp_q{(\frac{qx}{1-q})}=(1-x)^{-1}(1-qx)^{-1}\exp_q{(\frac{q^2x}{1-q})}.
		\]
		The proof is completed.
	\end{proof}

	%\subsection{$\widetilde{\Theta}(y,z)$ vs $\widetilde{P}(y)$ and $\widetilde{P}'(z)$}
	\subsection{Comparison with classic generating functions}
	
	In this subsection, we shall compare 
	the generating function  $\widetilde{\Theta}(y,z)$, $\widetilde{E}(y,z)$, $\widetilde{H}(y,z)$ in $\cd\ch(\bfk \QJ)$ and the ones  in the classical Hall algebra.
	
	We also consider $\widetilde{\ch}(\bfk \QJ)$ over $\Q(\sqq)$. 
	\begin{lemma}
		\label{lem:embeddings}
		There exist $\Q(\sqq)$-algebra embeddings 
		\begin{align}
			\iota^+:\widetilde{\ch}(\bfk \QJ)\longrightarrow\cd\widetilde{\ch}(\bfk \QJ),\qquad  \iota^-:\widetilde{\ch}(\bfk \QJ)\longrightarrow\cd\widetilde{\ch}(\bfk \QJ)
		\end{align}
		{such that $\iota^+([M])=[M]$, $\iota^-([M])=[M[1]]$.}
	\end{lemma}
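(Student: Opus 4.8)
The plan is to verify directly that the $\Q(\sqq)$-linear maps $\iota^{\pm}$ determined on the basis of $\widetilde{\ch}(\bfk\QJ)$ by $\iota^{+}([M])=[M]$ and $\iota^{-}([M])=[M[1]]$ (where on the right we view $M$, resp. $M[1]$, as an object of $\cR(\ca)$ through the full embedding $\ca\hookrightarrow\cR(\ca)$) are algebra homomorphisms, and then to observe injectivity. First I would reduce everything to a comparison of structure constants. In $\widetilde{\ch}(\bfk\QJ)$ we have $[M]*[N]=\sum_{[L]}G_{MN}^{L}[L]$ with $G_{MN}^{L}=|\Ext^{1}_{\ca}(M,N)_{L}|/|\Hom_{\ca}(M,N)|$, which by the Riedtmann--Peng formula \eqref{eq:RP} equals $F_{MN}^{L}\cdot|\aut(M)||\aut(N)|/|\aut(L)|$. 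So it suffices to show that in $\cd\widetilde{\ch}(\bfk\QJ)$ one has
\begin{align*}
[M]*[N]&=\sum_{[L]}\tfrac{|\aut(M)||\aut(N)|}{|\aut(L)|}\,F_{MN}^{L}\,[L],\\
[M[1]]*[N[1]]&=\sum_{[L]}\tfrac{|\aut(M)||\aut(N)|}{|\aut(L)|}\,F_{MN}^{L}\,[L[1]]
\end{align*}
for all $M,N\in\ca$, i.e.\ that the derived Hall numbers $G^{L}_{MN}$ (for stalk complexes) and $G^{L[1]}_{M[1],N[1]}$ both coincide with the Hall number $G^{L}_{MN}$.

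Both identities come out of Proposition~\ref{prop:dHallvsHall}. For the first, I apply the formula \eqref{six-hall} with $M^{\bullet}=M$, $N^{\bullet}=N$, $L^{\bullet}=L$ (so $M_{1}=N_{1}=L_{1}=0$): the factors $F_{A_{1}B_{1}}^{0}$, $F_{B_{1}C_{1}}^{0}$, $F_{C_{1}A_{0}}^{0}$ force $A_{0}=A_{1}=B_{1}=C_{1}=0$, then $F_{A_{0}B_{0}}^{M}$ and $F_{C_{0}A_{1}}^{N}$ force $B_{0}=M$, $C_{0}=N$, and the surviving factor $F_{B_{0}C_{0}}^{L}=F_{M,N}^{L}$ together with the automorphism factors $|\aut(M)||\aut(N)|/|\aut(L)|$ gives exactly $G^{L}_{MN}$. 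For the second, I run the same argument with $M^{\bullet}=M[1]$, $N^{\bullet}=N[1]$, $L^{\bullet}=L[1]$ (so $M_{0}=N_{0}=L_{0}=0$): now the vanishing of $F_{A_{0}B_{0}}^{0}$, $F_{B_{0}C_{0}}^{0}$, $F_{C_{0}A_{1}}^{0}$ forces $A_{0}=B_{0}=C_{0}=A_{1}=0$, then $B_{1}=M$, $C_{1}=N$, and again the sum collapses to $F_{M,N}^{L}\cdot|\aut(M)||\aut(N)|/|\aut(L)|=G^{L}_{MN}$. (Equivalently one can argue homologically in $\cR(\ca)$: morphisms $M[1]\to N[2]$ are extension classes in $\Ext^{1}_{\ca}(M,N)$ with cones the middle terms, and the normalization matches because $\{M[1],N[1]\}=|\Ext^{1}_{\ca}(M,N)|=|\Hom_{\ca}(M,N)|$ by triviality of the Euler form of $\ca$, cf.\ \eqref{eq:bilinear-stalk}.) Since $\iota^{\pm}$ send the unit $[0]$ to $[0]=[0[1]]$, they are unital $\Q$-algebra homomorphisms, hence $\Q(\sqq)$-algebra homomorphisms after base change.

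For injectivity, I use that $\{[M^{\bullet}]\mid M^{\bullet}\in\cR(\ca)\}$ is a $\Q$-basis of $\cd\widetilde{\ch}(\bfk\QJ)$ and that, by the uniqueness of the decomposition $M^{\bullet}\cong M_{0}\oplus M_{1}[1]$, distinct $[M],[M']\in\Iso(\ca)$ give non-isomorphic objects $M$, resp.\ $M[1]$, in $\cR(\ca)$; thus $\iota^{\pm}$ carry the basis $\{[M]\}$ of $\widetilde{\ch}(\bfk\QJ)$ to $\Q$-linearly independent subsets of $\cd\widetilde{\ch}(\bfk\QJ)$, whence injectivity. An alternative route is to deduce the statement from Lemma~\ref{lem:commDHA}, provided the isomorphism $\widetilde{\ch}(\bfk\QJ)\otimes\widetilde{\ch}(\bfk\QJ)\cong\cd\widetilde{\ch}(\bfk\QJ)$ is identified with the multiplication map $[M]\otimes[N]\mapsto[M]*[N[1]]$, in which case $\iota^{\pm}$ are the two factor inclusions. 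The one point requiring care — and the expected main obstacle — is matching the normalizations for $\iota^{-}$: the naive count of cones in $\cR(\ca)$ for $[M[1]]*[N[1]]$ carries the denominator $|\Ext^{1}_{\ca}(M,N)|$ whereas the Hall number has denominator $|\Hom_{\ca}(M,N)|$, and these agree precisely because the Euler form of $\ca$ is trivial; the rest is formal bookkeeping with \eqref{six-hall}.
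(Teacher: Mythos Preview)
Your proposal is correct and takes essentially the same approach as the paper: the paper's proof is a single sentence stating that it suffices to check $\iota^{\pm}$ preserve multiplication and that this follows from Proposition~\ref{prop:dHallvsHall}, and you have simply unpacked that deduction by specializing \eqref{six-hall} to stalk complexes in degrees $0$ and $1$ and watching the sum collapse. Your explicit treatment of injectivity (via basis elements mapping to basis elements) and the remark on the Euler form are not spelled out in the paper but are implicit in its one-line argument.
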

	
	\begin{proof}
		It is enough to prove that the maps $\iota^\pm$ preserve the multiplications, which follows from Proposition \ref{prop:dHallvsHall}.
	\end{proof}
	
	From Lemma \ref{lem:embeddings}, inspired by the generating functions of the classic Hall algebra $\widetilde{\ch}(\bfk \QJ)$ (cf. \cite[Page 222]{BKa01}),
	%{Mac95, Ka97, BKa01, Sch06}), 
	we define in $\cd\widetilde{\ch}(\bfk \QJ)$
	\begin{align}
		\label{eq:classic-SFTheta}
		&{\Theta}_1(y)%=\widetilde{\Theta}(y,0)
		=\sum_{r\ge0}{[S^{(r)}]}y^r,& {\Theta}_2(z)%=\widetilde{\Theta}(0,z)
		&=\sum_{t\ge0}{[ S^{(t)}[1]]} z^t;
		\\
			\label{eq:classic-SFE}
		&{E}_1(y)=\sum_{r\ge0}\sqq^{r(r-1)}\frac{[S^{(1^r)}]}{|\Aut(S^{(1^r)})|}y^r ,& {E}_2(z)&=\sum_{t\ge0}\sqq^{t(t-1)}\frac{[S^{(1^t)}[1]]}{|\Aut(S^{(1^t)})|} z^t;
		\\
			\label{eq:classic-SFH}
			&{H}_1(y)=\sum_{r\ge0}\sum_{\la\vdash r}\frac{[S^{(\la)}]}{|\Aut(S^{(\la)})|}y^r,& {H}_2(z)&=\sum_{t\ge0}\sum_{\mu \vdash t}\frac{[S^{(\mu)}[1]]}{|\Aut(S^{(\mu)})|} z^t.
	\end{align}
	We also define (see \cite[Page 23]{Mac95})
	\begin{align}
		\label{def:p}
		\begin{split}
			\widetilde{P}(y)=\sum_{r\ge1}\widetilde{P}_ry^{r-1},\quad \text{where } \widetilde{P}_r=\sum_{\la\vdash r} \varphi_{\ell(\la)-1}(q)\frac{[S^{(\la)}]}{|\Aut(S^{(\la)})|},
			\\
			\widetilde{P}'(z)=\sum_{t\ge1}{\widetilde{P}}'_tz^{t-1},\quad \text{where } \widetilde{P}'_t=\sum_{\mu\vdash t} \varphi_{\ell(\mu)-1}(q)\frac{[S^{(\mu)}[1]]}{|\Aut(S^{(\mu)})|}.
		\end{split}
	\end{align}
	Then we have the following proposition.
	
	\begin{theorem}
		\label{prop:transition}
		We have
		\begin{align}
			\label{eq:Theta-Theta12}
			\widetilde{\Theta}(y,z)=&\Theta_1(y)\Theta_2(z)\exp\big( \sum_{k\ge1}\frac{1-q^k}{k}(yz)^k\big),
			\\
			\widetilde{E}(y,z)=&E_1(y)E_2(z)\exp_q\big( \frac{yz}{1-q}\big),
			\\
			\widetilde{H}(y,z)=&H_1(y)H_2(z)\exp_q\big( \frac{yz}{1-q}\big).
		\end{align}
	\end{theorem}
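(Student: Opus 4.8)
The plan is to derive all three identities from the classical generating-function relations in the ordinary Hall algebra $\widetilde{\ch}(\bfk \QJ)$, together with one direct computation inside $\cd\widetilde{\ch}(\bfk \QJ)$. First, by Lemma~\ref{lem:embeddings} the series $\Theta_1(y),E_1(y),H_1(y)$ of \eqref{eq:classic-SFTheta}--\eqref{eq:classic-SFH} are precisely the images under $\iota^+$ of the classical series $\Theta(y)=\sum_{r\ge0}[S^{(r)}]y^r$, $E(y)=\sum_{r\ge0}\sqq^{r(r-1)}[S^{(1^r)}]/|\Aut(S^{(1^r)})|\,y^r$, $H(y)=\sum_{r\ge0}\sum_{\la\vdash r}[S^{(\la)}]/|\Aut(S^{(\la)})|\,y^r$ in $\widetilde{\ch}(\bfk\QJ)$, while $\Theta_2(z),E_2(z),H_2(z)$ are their images under $\iota^-$. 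Since $\iota^\pm$ are algebra embeddings, the classical identities $H(y)E(-y)=1$ and $\Theta(y)=E(-y)/E(-qy)$ in $\widetilde{\ch}(\bfk\QJ)$ (see \cite[III]{Mac95}, \cite[Page 222]{BKa01}, \cite{Sch06}) transfer to $H_i(y)E_i(-y)=1$ and $\Theta_i(y)=E_i(-y)/E_i(-qy)$ for $i=1,2$.

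Second, I would prove the $\widetilde{\Theta}$ identity directly. Taking $\rho=(r)$ and $\nu=\emptyset$ in \eqref{Pieri:h2} --- equivalently, feeding $M^\bullet=S^{(r)}$ (a degree-$0$ stalk, so $M_1=0$) and $N^\bullet=S^{(t)}[1]$ (a degree-$1$ stalk, so $N_0=0$) into \eqref{six-hall}, which forces $A_1=B_1=C_0=0$, $B_0=L_0$, $C_1=L_1$ and collapses the six-fold Hall sum to a single ``overlap'' module $A_0$ --- and using that $S^{(r)},S^{(t)}$ are uniserial (so the surviving Hall numbers are $0$ or $1$), one obtains
\[
[S^{(r)}]*[S^{(t)}[1]]=\sum_{j\ge0}|\Aut(S^{(j)})|\,[S^{(r-j)}\oplus S^{(t-j)}[1]],
\]
with $|\Aut(S^{(j)})|=q^{j-1}(q-1)$ for $j\ge1$ by \eqref{eq:aut}. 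Multiplying out $\Theta_1(y)\Theta_2(z)=\sum_{r,t\ge0}[S^{(r)}]*[S^{(t)}[1]]\,y^rz^t$ and reindexing by $a=r-j$, $b=t-j$ factors a scalar series in $yz$ out of $\widetilde{\Theta}$:
\[
\Theta_1(y)\Theta_2(z)=\Big(\sum_{j\ge0}|\Aut(S^{(j)})|(yz)^j\Big)\widetilde{\Theta}(y,z)=\frac{1-yz}{1-qyz}\,\widetilde{\Theta}(y,z),
\]
and the desired formula follows from $\dfrac{1-qu}{1-u}=\exp\big(\sum_{k\ge1}\tfrac{1-q^k}{k}u^k\big)$.

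Finally, the remaining two follow formally. Set $G(y,z):=\widetilde{E}(y,z)\big/\big(E_1(y)E_2(z)\big)$, a power series with $G(0,0)=1$. Substituting the $\widetilde{\Theta}$-formula just proved, the relation $\widetilde{\Theta}(y,z)=(1-qyz)^2\,\widetilde{E}(-y,-z)/\widetilde{E}(-qy,-qz)$ of Proposition~\ref{theta-e-h}, and $\Theta_i(y)=E_i(-y)/E_i(-qy)$, one gets the functional equation $G(y,z)(1-yz)(1-qyz)=G(qy,qz)$. By \eqref{eq:Euler} one has $\exp_q\big(\frac{yz}{1-q}\big)=(yz;q)_\infty^{-1}$, and since $(u;q)_\infty=(1-u)(1-qu)(q^2u;q)_\infty$, this series solves the equation; comparing coefficients of $y^iz^j$ (whose recursion has the nonzero factor $1-q^{i+j}$ for $i+j\ge1$) shows the normalized solution is unique, so $G(y,z)=\exp_q\big(\frac{yz}{1-q}\big)$, i.e.\ $\widetilde{E}(y,z)=E_1(y)E_2(z)\exp_q\big(\frac{yz}{1-q}\big)$. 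Then Proposition~\ref{e-h} gives $\widetilde{H}(y,z)=\exp_q\big(\frac{yz}{1-q}\big)^2\big/\widetilde{E}(-y,-z)$, and inserting the formula for $\widetilde{E}$ together with $H_i(y)=E_i(-y)^{-1}$ yields $\widetilde{H}(y,z)=H_1(y)H_2(z)\exp_q\big(\frac{yz}{1-q}\big)$. The main obstacle is the direct step: correctly collapsing the six-term Hall sum \eqref{six-hall} for a degree-$0$ stalk times a degree-$1$ stalk and keeping track of all the automorphism factors. Conceptually, such a derived Hall product equals the ``naive'' direct sum corrected by an overlap submodule, and this correction assembles into the scalar $q$-series appearing in the theorem.
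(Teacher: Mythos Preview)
Your argument is correct and takes a genuinely different, more elementary route than the paper. The paper introduces the power-sum type elements $\widetilde{P}_r,\widetilde{P}'_t$ of \eqref{def:p}, defines $\widetilde{T}_{m,n}$ via $\widetilde{\Theta}(y,z)=\exp\big((\sqq-\sqq^{-1})\sum \widetilde{T}_{m,n}y^mz^n\big)$, and proves by induction (Proposition~\ref{T-P}) that $\widetilde{T}_{m,n}$ vanishes unless $m=0$, $n=0$, or $m=n$; this yields the expression of $\widetilde{\Theta}$ in terms of $\widetilde{P},\widetilde{P}'$, and then the classical formula $\Theta_i(y)=\exp\big(\sum_r\frac{q^r-1}{r}\widetilde{P}_ry^r\big)$ finishes \eqref{eq:Theta-Theta12}. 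The $\widetilde{E}$ and $\widetilde{H}$ identities are obtained similarly via Proposition~\ref{prop:EHP}.

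By contrast, you bypass all of this by collapsing the six-term sum \eqref{six-hall} directly for the stalk-times-stalk product $[S^{(r)}]*[S^{(t)}[1]]$; the uniseriality of $S^{(r)}$ and $S^{(t)}$ indeed forces $A_1=B_1=C_0=0$, $B_0=L_0$, $C_1=L_1$, and leaves only the single overlap $A_0=S^{(j)}$, giving the clean formula $\Theta_1(y)\Theta_2(z)=\frac{1-yz}{1-qyz}\,\widetilde{\Theta}(y,z)$. Your deduction of the $\widetilde{E}$ formula from this, Proposition~\ref{theta-e-h}, and the classical $\Theta_i(y)=E_i(-y)/E_i(-qy)$ via the functional equation $G(qy,qz)=(1-yz)(1-qyz)G(y,z)$ is sound (uniqueness holds since $q^{i+j}\neq 1$ for $i+j\ge 1$), and the $\widetilde{H}$ formula then follows from Proposition~\ref{e-h} and $H_iE_i(-\,\cdot\,)=1$. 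What your approach buys is a much shorter and conceptually transparent proof of Theorem~\ref{prop:transition}; what the paper's approach buys is the intermediate Propositions~\ref{T-P} and~\ref{prop:EHP} expressing everything through $\widetilde{P}_r,\widetilde{P}'_t$, which have independent interest (e.g.\ Corollary~\ref{lem:PE}).
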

	
	The proof of Theorem \ref{prop:transition} is given in the next subsection. The strategy is 
	to use $P_1(y)$, $P_2(z)$ to express $\widetilde{\Theta}(y,z)$, $\widetilde{E}(y,z)$, $\widetilde{H}(y,z)$.

	%%%%
	\subsection{The proof of Theorem \ref{prop:transition}}
	
	Denote $\widetilde{\Theta}_{m,n}=\dfrac{[S^{(m)}\oplus S^{(n)}[1]]}{\sqq-\sqq^{-1}}$ in $\cd\widetilde{\ch}(\bfk \QJ)$. It follows that
	\begin{align}
		\label{theta-theta}
		\widetilde{\Theta}(y,z)=(\sqq-\sqq^{-1})\sum_{m,n\ge0}\widetilde{\Theta}_{m,n}y^m z^n.
	\end{align}
	% \begin{align}
	%     \widetilde{\Theta}(y,z)=\sum_{r,t\ge0}{[S^{(r)}\oplus S^{(t)}[1]]}y^r z^t,
	% \end{align}
	
	Define $\widetilde{T}_{m,n}\in \cd\widetilde{\ch}(\bfk \QJ)$ by
	\begin{align}
		\label{theta-T}
		\widetilde{\Theta}(y,z)=\exp\big((\sqq-\sqq^{-1})\sum_{m,n\ge0}\widetilde{T}_{m,n}y^m z^n\big).
	\end{align}
	Note that $\widetilde{T}_{0,0}=0$.
	
	\begin{lemma}
		We have
		\begin{align}  
			\label{t*theta}
			m\widetilde{\Theta}_{m,n}=(\sqq-\sqq^{-1})\sum_{1\le a\le m}\sum_{0\le b\le n}a\widetilde{T}_{a,b}*\widetilde{\Theta}_{m-a,n-b},
		\end{align}
		and
		\begin{align}        
			n\widetilde{\Theta}_{m,n}=(\sqq-\sqq^{-1})\sum_{0\le a\le m}\sum_{1\le b\le n}b\widetilde{T}_{a,b}*\widetilde{\Theta}_{m-a,n-b}.
		\end{align}
	\end{lemma}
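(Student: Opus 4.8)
\emph{Proof proposal.} The plan is to extract \eqref{t*theta} (and its companion) from the two descriptions \eqref{theta-theta} and \eqref{theta-T} of $\widetilde{\Theta}(y,z)$ by applying the logarithmic derivative operator $y\partial_y$. The key structural input is that the derived Hall algebra $\cd\widetilde{\ch}(\bfk \QJ)$, hence also its scalar extension $\cd\widetilde{\ch}(\bfk \QJ)\otimes_\Q\Q(\sqq)$, is commutative by Lemma~\ref{lem:commDHA}. Thus $\widetilde{\Theta}(y,z)$ lives in the formal power series ring in $y,z$ over a commutative coefficient ring, its constant term is the identity $[0]=1$ (taking $m=n=0$ in \eqref{theta-theta}), and the usual identity $\partial_y \exp(G)=(\partial_y G)\ast \exp(G)$ is valid for a power series $G$ with $\ast$-coefficients and zero constant term. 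Note that this last condition holds here since $\widetilde{T}_{0,0}=0$.

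Concretely, first I would differentiate \eqref{theta-T}: writing $G=(\sqq-\sqq^{-1})\sum_{m,n\ge0}\widetilde{T}_{m,n}y^m z^n$, we get
\begin{align*}
y\partial_y\widetilde{\Theta}(y,z)
=\bigl(y\partial_y G\bigr)\ast\widetilde{\Theta}(y,z)
=\Bigl((\sqq-\sqq^{-1})\sum_{a,b\ge0}a\,\widetilde{T}_{a,b}\,y^a z^b\Bigr)\ast\widetilde{\Theta}(y,z).
\end{align*}
Now substitute \eqref{theta-theta} for the remaining factor $\widetilde{\Theta}(y,z)$ and expand the Cauchy product; on the other hand, apply $y\partial_y$ directly to \eqref{theta-theta}. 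Comparing the coefficients of $y^m z^n$ in the two resulting expressions gives
\begin{align*}
(\sqq-\sqq^{-1})\,m\,\widetilde{\Theta}_{m,n}
=(\sqq-\sqq^{-1})^2\sum_{\substack{a+a'=m\\ b+b'=n}}a\,\widetilde{T}_{a,b}\ast\widetilde{\Theta}_{a',b'}.
\end{align*}
Cancelling one factor $(\sqq-\sqq^{-1})$, which is a unit, and discarding the $a=0$ terms (which vanish), and noting that $b=n-b'\le n$ holds automatically, yields exactly \eqref{t*theta}.

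The second identity is the mirror statement, obtained verbatim by replacing $y\partial_y$ with $z\partial_z$: there one restricts the summation to $1\le b\le n$, keeps $0\le a\le m$, and the $b=0$ terms drop out. I do not expect a genuine obstacle here. The only point demanding care is the legitimacy of the formal logarithmic differentiation of the exponential series, and that is precisely what commutativity of $\cd\widetilde{\ch}(\bfk \QJ)$ (Lemma~\ref{lem:commDHA}) provides; the remainder is routine bookkeeping of power series coefficients.
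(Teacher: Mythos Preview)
Your proposal is correct and follows essentially the same approach as the paper: differentiate the two expressions \eqref{theta-theta} and \eqref{theta-T} for $\widetilde{\Theta}(y,z)$ with respect to $y$ (the paper uses $\partial_y$, you use $y\partial_y$, a cosmetic difference) and compare coefficients. Your explicit invocation of commutativity (Lemma~\ref{lem:commDHA}) to justify $\partial_y\exp(G)=(\partial_y G)\ast\exp(G)$ is a welcome clarification that the paper leaves implicit.
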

	
	\begin{proof}
		We only prove the first formula. Applying the differentiation $\frac{\partial}{\partial y}$ to \eqref{theta-theta} and \eqref{theta-T}, we have
		\begin{align}\label{DerivationFormula}
			&\sum_{m\ge1,n\geq 0} (\sqq-\sqq^{-1}) m \widetilde{\Theta}_{m,n} y^{m-1}z^n
			%&= \Big(\sum_{l=1}^\infty l \widehat{T}_l z^{l-1} \Big) \exp\Big( (\sqq-\sqq^{-1}) \sum_{m=1}^\infty \widehat{T}_m z^m \Big)  \\
			\\\notag&= \Big( \sum_{a\ge 1,b\ge0}(\sqq-\sqq^{-1}) a \widehat{T}_{a,b} y^{a-1}z^b \Big)  \Big(\sum_{m,n\geq 0} (\sqq-\sqq^{-1})\widetilde{\Theta}_{m,n} y^m z^n\Big).
		\end{align}
		Now the desired formula follows by comparing coefficients of $y^{m-1}z^n$ on both sides.
	\end{proof}
	
	\begin{proposition}
		\label{T-P}
		We have
		\begin{align} \label{t-p}
			\widetilde{T}_{m,n}=\delta_{n,0}\sqq^{m}\frac{[m]_\sqq}{m}\widetilde{P}_m+ \delta_{m,0}\sqq^{n}\frac{[n]_\sqq}{n}\widetilde{P}'_n-\delta_{m,n}\sqq^m\dfrac{[m]_\sqq}{m}\quad (m\ge1\text{ or }n\ge1),
		\end{align}
		\begin{align}
			\label{theta-p}
			\widetilde{\Theta}(y,z)=\exp\big(\sum_{r\ge1}\frac{q^r-1}{r}\widetilde{P}_ry^r\big)  \exp\big(\sum_{t\ge1}\frac{q^t-1}{t}\widetilde{P}'_tz^t\big)\exp\big( \sum_{k\ge1}\frac{1-q^k}{k}(yz)^k\big).
		\end{align}
	\end{proposition}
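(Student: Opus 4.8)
\emph{Proof proposal.} The plan is to reduce the whole statement to one ``mixed product'' computation in $\cd\widetilde{\ch}(\bfk\QJ)$. Since \eqref{t-p} and \eqref{theta-p} are equivalent — one passes between them by taking the logarithm of \eqref{theta-T}, using $(\sqq-\sqq^{-1})\sqq^m\tfrac{[m]_\sqq}{m}=q^m-1$ and $\exp\big(\sum_{k\ge1}\tfrac{1-q^k}{k}(yz)^k\big)=\tfrac{1-qyz}{1-yz}$ — it suffices to prove the product identity
\[
\widetilde{\Theta}(y,z)=\Theta_1(y)\,\Theta_2(z)\,\frac{1-qyz}{1-yz}
\]
in $\cd\widetilde{\ch}(\bfk\QJ)\otimes_\Q\Q(\sqq)$, together with the classical Hall-algebra identity $\Theta_1(y)=\exp\big(\sum_{r\ge1}\tfrac{q^r-1}{r}\widetilde{P}_ry^r\big)$ in $\widetilde{\ch}(\bfk\QJ)$ (see \cite[Chs.~II--III]{Mac95}), whose image under the algebra embedding $\iota^-$ of Lemma~\ref{lem:embeddings} gives $\Theta_2(z)=\iota^-(\Theta_1(z))=\exp\big(\sum_{t\ge1}\tfrac{q^t-1}{t}\widetilde{P}'_tz^t\big)$. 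Indeed, combining these three facts yields \eqref{theta-p} at once, and then \eqref{t-p} follows by comparing coefficients of $y^mz^n$ on the two sides of $(\sqq-\sqq^{-1})\sum_{m,n}\widetilde{T}_{m,n}y^mz^n=\log\widetilde{\Theta}(y,z)$, the $\delta_{n,0}$, $\delta_{m,0}$, $\delta_{m,n}$ terms of \eqref{t-p} being exactly the contributions of $\log\Theta_1$, $\log\Theta_2$ and $\log\tfrac{1-qyz}{1-yz}$ (the last being a scalar multiple of the unit $[0]$).

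The heart of the argument is therefore the product $[S^{(m)}]*[S^{(n)}[1]]$. I would compute it from \eqref{six-hall} with $M^\bullet=S^{(m)}$, $N^\bullet=S^{(n)}[1]$, $L^\bullet=L_0\oplus L_1[1]$: the vanishings $M_1=N_0=0$ force $A_1=B_1=C_0=0$, and since $\ca$ is uniserial each surviving Hall number $F_{A_0L_0}^{S^{(m)}}$, $F_{L_1A_0}^{S^{(n)}}$ equals $0$ or $1$ according to whether $L_0,A_0,L_1$ are the unique sub/quotient objects of the prescribed lengths; bookkeeping with $|\Aut(S^{(a)})|=q^{a-1}(q-1)$ for $a\ge1$ then gives
\[
[S^{(m)}]*[S^{(n)}[1]]=[S^{(m)}\oplus S^{(n)}[1]]+(q-1)\sum_{k=1}^{\min(m,n)}q^{\,k-1}\,[S^{(m-k)}\oplus S^{(n-k)}[1]].
\]
(Equivalently, one argues straight from \eqref{eq:derhallnum}: a morphism $S^{(m)}\to S^{(n)}$ in $\ca$ — which is what the relevant Hom-space $\Hom_{\cR(\ca)}(S^{(m)},S^{(n)}[1][1])=\Hom_\ca(S^{(m)},S^{(n)})$ records — with image of length $n-k$ has kernel $\cong S^{(m-k)}$ and cokernel $\cong S^{(k)}$, and in the hereditary root category its cone is $\coker\oplus\ker[1]$, while $\{S^{(m)},S^{(n)}[1]\}=1$.) Multiplying by $y^mz^n$, summing over $m,n\ge0$, and using the commutativity of $\cd\widetilde{\ch}(\bfk\QJ)$ (Lemma~\ref{lem:commDHA}), the geometric series collapses: $\Theta_1(y)\Theta_2(z)=\widetilde{\Theta}(y,z)\big(1+(q-1)\sum_{k\ge1}q^{k-1}(yz)^k\big)=\widetilde{\Theta}(y,z)\tfrac{1-yz}{1-qyz}$, which is the displayed product identity.

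I expect the mixed-product step to be the only real obstacle, and it is essentially a matter of keeping conventions straight: $[S^{(n)}[1]]$ lives in the root category $\cR(\ca)=\cd^b(\ca)/[2]$, so the Hom-space entering \eqref{eq:derhallnum} is $\Hom_{\cR(\ca)}(S^{(m)},S^{(n)}[1][1])=\Hom_\ca(S^{(m)},S^{(n)})$ (not an $\Ext$-group in $\ca$), and it is the hereditary-cone identity $\cone(f)\cong\coker(f)\oplus\ker(f)[1]$ — rather than any exact sequence inside $\ca$ — that produces the parameter-shifted factor $\tfrac{1-yz}{1-qyz}$, and hence the correction term $-\delta_{m,n}\sqq^m\tfrac{[m]_\sqq}{m}$ in \eqref{t-p}. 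Everything else (the generating-function bookkeeping and the appeal to the classical identity for $\Theta_1$, which also feeds the subsequent computation of $\widetilde{E}$ and $\widetilde{H}$) is routine.
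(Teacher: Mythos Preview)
Your argument is correct and is a genuinely different (and cleaner) route than the paper's. The paper proves \eqref{t-p} first, by induction on $(m,n)$ using the recursion \eqref{t*theta}; the inductive step requires computing $(\sqq-\sqq^{-1})\widetilde P_a*\widetilde\Theta_{m-a,n}$ and invoking two external results, namely \cite[Corollary~5.2]{LRW25} (a vanishing identity for $\sum_\la\varphi_{\ell(\la)-1}(q)|\Ext^1(S^{(\alpha)},S^{(r)})_{S^{(\la)}}|$) and \cite[Proposition~6.7, Case~(b)]{LRW25}. Only afterwards does the paper deduce \eqref{theta-p} and, in the proof of Theorem~\ref{prop:transition}, the product form $\widetilde\Theta(y,z)=\Theta_1(y)\Theta_2(z)\tfrac{1-qyz}{1-yz}$. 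You reverse this order: you establish the product form directly from the single mixed computation $[S^{(m)}]*[S^{(n)}[1]]$, which is elementary once one notes (as you do) that in \eqref{six-hall} the constraints force $A_1=B_1=C_0=0$, $B_0=L_0$, $C_1=L_1$, leaving the sum $\sum_a F^{S^{(m)}}_{S^{(a)}L_0}F^{S^{(n)}}_{L_1S^{(a)}}|\Aut(S^{(a)})|$ over uniserial subquotients. This avoids both the induction and the appeals to \cite{LRW25}, at the cost only of importing the classical identity $\Theta_1(y)=\exp\big(\sum_{r\ge1}\tfrac{q^r-1}{r}\widetilde P_ry^r\big)$ from \cite{Mac95}, which the paper uses anyway. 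What the paper's approach buys in exchange is that the recursion \eqref{t*theta} and the accompanying machinery are reused verbatim in the parallel treatment of $\widetilde E$ and $\widetilde H$ in Proposition~\ref{prop:EHP}; your shortcut handles $\widetilde\Theta$ in isolation.

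One small slip to fix: in your parenthetical cone description, a morphism $f:S^{(m)}\to S^{(n)}$ with image of length $k$ (not $n-k$) has $\ker f\cong S^{(m-k)}$ and $\coker f\cong S^{(n-k)}$, so that $\cone(f)\cong S^{(n-k)}\oplus S^{(m-k)}[1]=L^\bullet[1]$ with $L^\bullet=S^{(m-k)}\oplus S^{(n-k)}[1]$; the number of such $f$ is $q^{k-1}(q-1)$ for $k\ge1$ and $1$ for $k=0$, matching your formula. This does not affect your main argument via \eqref{six-hall}, which is stated correctly.
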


	\begin{proof}
		The second formula \eqref{theta-p} follows immediately 
		from \eqref{t-p} and \eqref{theta-T}.
		So it remains to prove \eqref{t-p}, we get it by induction. 
		It is clear for $m=0$ or $n=0$. 
		
		Let $m,n\ge1$ and assume that \eqref{t-p} holds for any $m',n'$ with $m'<m,n'<n$. Without loss of generality, we assume $m\le n$. Then by \eqref{t*theta} and the inductive assumption, we obtain
		\begin{align*}       &m\widetilde{T}_{m,n}\\&=m\widetilde{\Theta}_{m,n}-(\sqq-\sqq^{-1})\sum_{\substack{1\le a\le m,0\le b\le n\\(a,b)\ne(m,n)}} a\widetilde{T}_{a,b}*\widetilde{\Theta}_{m-a,n-b}
			\\&=m\widetilde{\Theta}_{m,n}-(\sqq-\sqq^{-1})\sum_{{1\le a\le m}} a\widetilde{T}_{a,0}*\widetilde{\Theta}_{m-a,n}-(\sqq-\sqq^{-1})\sum_{\substack{1\le a\le \min(m,n)\\(a,a)\ne(m,n)}} a\widetilde{T}_{a,a}*\widetilde{\Theta}_{m-a,n-a}
			\\&=m\widetilde{\Theta}_{m,n}-(\sqq-\sqq^{-1})\sum_{{1\le a\le m}} \sqq^a[a]_\sqq \widetilde{P}_a*\widetilde{\Theta}_{m-a,n}+(\sqq-\sqq^{-1})\sum_{\substack{1\le a\le \min(m,n)\\(a,a)\ne(m,n)}} \sqq^a[a]_\sqq\widetilde{\Theta}_{m-a,n-a}.
		\end{align*}
		
		By definition, we have
		\begin{align*}
			(\sqq-\sqq^{-1})\widetilde{P}_a*\widetilde{\Theta}_{m-a,n}=&
			\sum_{\la\vdash a} \varphi_{\ell(\la)-1}(q)\frac{[S^{(\la)}]}{|\Aut(S^{(\la)})|}*[S^{(m-a)}\oplus S^{(n)}[1]]
			\\=&\sum_{\la\vdash a} \frac{\varphi_{\ell(\la)-1}(q)}{|\Aut(S^{(\la)})|}\sum_{\rho,c,\alpha} F_{(n-c),\alpha}^\la F_{\alpha,(m-a)}^\rho\frac{|\Aut(S^{(n-c)})||\Aut(S^{(\alpha)})|}{|\Aut(S^{(\rho)})|}
			\\
			&\quad\times|\Aut(S^{(m-a)})|[S^{(\rho)}\oplus S^{(c)}[1]]
			\\=&\sum_{\la\vdash a} {\varphi_{\ell(\la)-1}(q)}\sum_{\rho,c,\alpha} \frac{|\Ext^1(S^{(\alpha)},S^{(n-c)})_{S^{(\la)}}| }{|\Hom(S^{(\alpha)},S^{(n-c)})|}\frac{|\Ext^1(S^{(\alpha)},S^{(m-a)})_{S^{(\rho)}}|}{|\Hom^1(S^{(\alpha)},S^{(m-a)})|} \\
			&\quad\times\frac{1}{|\Aut(S^{(\alpha)})|}[S^{(\rho)}\oplus S^{(c)}[1]].
		\end{align*}
		Note that the following holds for any $r\ge1$ and $\alpha\ne\emptyset$ by \cite[Corollary 5.2]{LRW25},
		\begin{align*}
			\sum_{\la}\varphi_{\ell(\la)-1}(q)|\Ext^1(S^{(\alpha)},S^{(r)})_{S^{(\la)}}|=0.
		\end{align*}
		So the terms vanish in $(\sqq-\sqq^{-1})\widetilde{P}_a*\widetilde{\Theta}_{m-a,n}$ unless $n=c$ or $\alpha=\emptyset$. If $n=c$, then $\la=\alpha$; if $\alpha=\emptyset$, then $\la=(n-c),n-c=a$ and $\rho=(m-a)$. Hence, we obtain
		\begin{align*}
			(\sqq-\sqq^{-1})\widetilde{P}_a*\widetilde{\Theta}_{m-a,n}=&[S^{(m-a)}\oplus S^{(n-a)}[1]]+
			\sum_{\la\vdash a} {\varphi_{\ell(\la)-1}(q)}\sum_{\rho} \frac{|\Ext^1(S^{(\la)},S^{(m-a)})_{S^{(\rho)}}|}{|\Hom^1(S^{(\la)},S^{(m-a)})|}
			\\
			&\qquad\times\frac{1}{|\Aut(S^{(\la)})|}[S^{(\rho)}\oplus S^{(n)}[1]].
		\end{align*}
		Note that
		\begin{align*}
			(\sqq-\sqq^{-1})\sum_{\substack{1\le a\le \min(m,n)\\(a,a)\ne(m,n)}} \sqq^a[a]_\sqq\widetilde{\Theta}_{m-a,n-a}-\sum_{1\le a\le m}\sqq^a [a]_\sqq [S^{(m-a)}\oplus S^{(n-a)}[1]]=-\delta_{m,n}\sqq^m{[m]_\sqq}.
		\end{align*}
		It remains to prove
		\begin{align}
			\label{eq:Theta-TP}    &m\widetilde{\Theta}_{m,n}-\sum_{{1\le a\le m}} \sqq^a[a]_\sqq \sum_{\la\vdash a} {\varphi_{\ell(\la)-1}(q)}
			\\\notag&\qquad\cdot\sum_{\rho} \frac{|\Ext^1(S^{(\la)},S^{(m-a)})_{S^{(\rho)}}|}{|\Hom^1(S^{(\la)},S^{(m-a)})|} \frac{1}{|\Aut(S^{(\la)})|}[S^{(\rho)}\oplus S^{(n)}[1]]=0.
		\end{align}
		We shall prove \eqref{eq:Theta-TP} by considering the coefficients of  $[S^{(\rho)}\oplus S^{(n)}[1]]$.
		%where $\widetilde{\Theta}_{m,n}=\dfrac{[S^{(\rho)}\oplus S^{(n)}[1]]}{\sqq-\sqq^{-1}}$.
		
		For the case $\ell(\rho)=1$, we have $\rho=(m)$ and $\la=(a)$. Then the coefficient of $[S^{(\rho)}\oplus S^{(n)}[1]]$ is equal to
		\begin{align*}
			&\frac{m}{\sqq-\sqq^{-1}}-\sum_{1\le a\le m} \sqq^a [a]_\sqq\frac{|\Aut(S^{(m-a)})|}{|\Aut(S^{(m)})|}
			\\=&\frac{m}{\sqq-\sqq^{-1}}-\sqq^m[m]_\sqq\frac{1}{(q-1)q^{m-1}}-\sum_{1\le a\le m-1} \frac{q^a-1}{\sqq-\sqq^{-1}}\frac{q^{m-a}(1-q^{-1})}{q^{m}(1-q^{-1})}
			\\=&\frac{m}{\sqq-\sqq^{-1}}-\frac{q^m-1}{(\sqq-\sqq^{-1})(q-1)q^{m-1}}-\sum_{1\le a\le m-1} \frac{1-q^{-a}}{\sqq-\sqq^{-1}}
			\\=&\frac{m}{\sqq-\sqq^{-1}}-\frac{q-q^{1-m}}{(\sqq-\sqq^{-1})(q-1)}-\frac{m-1}{\sqq-\sqq^{-1}}+ \frac{1-q^{1-m}}{(q-1)(\sqq-\sqq^{-1})}
			=0.
		\end{align*}
		
		For the case $\ell(\rho)>1$, the coefficient of $[S^{(\rho)}\oplus S^{(n)}[1]]$ is also equal to 0 by Case $(b)$ in \cite[Proposition 6.7]{LRW25}.
		% The proof is completed.
	\end{proof}

	%\subsection{$\widetilde{E}(y,z)$ or $\widetilde{H}(y,z)$ vs $\widetilde{P}(y)$ and $\widetilde{P}'(z)$}
	Similarly, we can write $\widetilde{E}(y,z)$, $\widetilde{H}(y,z)$ in terms of ${P}_1(y)$, ${P}_2(z)$; see the following proposition.
	
	\begin{proposition}
		\label{prop:EHP}
		We have
		\begin{align}
			\label{eq:EP}
			\widetilde{E}(y,z)=&\exp\big(\sum_{r\ge1}\frac{(-1)^{r-1}}{r}\widetilde{P}_ry^r\big)  \exp\big(\sum_{t\ge1}\frac{(-1)^{t-1}}{t}\widetilde{P}'_tz^t\big)\exp_q\big( \frac{yz}{1-q}\big),
			\\
			\label{eq:HP}
			\widetilde{H}(y,z)=&\exp\big(\sum_{r\ge1}\frac{1}{r}\widetilde{P}_ry^r\big)  \exp\big(\sum_{t\ge1}\frac{1}{t}\widetilde{P}'_tz^t\big)\exp_q\big( \frac{yz}{1-q}\big).
		\end{align}
	\end{proposition}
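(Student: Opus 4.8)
The plan is to prove \eqref{eq:EP} first, by showing that its right‑hand side satisfies the same functional equation as $\widetilde{E}(y,z)$ — namely \eqref{theta-e} — together with the formula \eqref{theta-p} for $\widetilde{\Theta}(y,z)$, and then to deduce \eqref{eq:HP} at once from \eqref{eq:EP} and Proposition~\ref{e-h}. All computations will be carried out in the ring of formal power series in $y,z$ with coefficients in the commutative algebra $\cd\widetilde{\ch}(\bfk\QJ)\otimes_{\Q}\Q(\sqq)$; each $\widetilde{P}_r$ (resp.\ $\widetilde{P}'_t$) is a finite sum, so the exponentials $\exp\big(\sum_{r\ge1}c_r\widetilde{P}_ry^r\big)$ are well defined and, having constant term $1$, are invertible.

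First I would introduce the candidate
\[
E^{\circ}(y,z):=\exp\Big(\sum_{r\ge1}\tfrac{(-1)^{r-1}}{r}\widetilde{P}_ry^r\Big)\exp\Big(\sum_{t\ge1}\tfrac{(-1)^{t-1}}{t}\widetilde{P}'_tz^t\Big)\exp_q\Big(\tfrac{yz}{1-q}\Big)
\]
and verify by a direct computation that $(1-qyz)^2\,E^{\circ}(-y,-z)/E^{\circ}(-qy,-qz)=\widetilde{\Theta}(y,z)$. The substitution $y\mapsto -y$ turns $\exp\big(\sum_{r\ge1}\tfrac{(-1)^{r-1}}{r}\widetilde{P}_ry^r\big)$ into $\exp\big(-\sum_{r\ge1}\tfrac{1}{r}\widetilde{P}_ry^r\big)$, so dividing the $\widetilde{P}$‑ and $\widetilde{P}'$‑parts of $E^{\circ}(-y,-z)$ by those of $E^{\circ}(-qy,-qz)$ reproduces exactly the factors $\exp\big(\sum_{r\ge1}\tfrac{q^r-1}{r}\widetilde{P}_ry^r\big)$ and $\exp\big(\sum_{t\ge1}\tfrac{q^t-1}{t}\widetilde{P}'_tz^t\big)$ of \eqref{theta-p}; the identity $\exp_q\big(\tfrac{x}{1-q}\big)=(1-x)^{-1}(1-qx)^{-1}\exp_q\big(\tfrac{q^2x}{1-q}\big)$ (used already in the proof of Proposition~\ref{theta-e-h}) gives, with $x=yz$, a factor $(1-yz)^{-1}(1-qyz)^{-1}$; and the elementary $\exp\big(\sum_{k\ge1}\tfrac{w^k}{k}\big)=(1-w)^{-1}$ converts $(1-qyz)^2\cdot(1-yz)^{-1}(1-qyz)^{-1}=(1-qyz)(1-yz)^{-1}$ into $\exp\big(\sum_{k\ge1}\tfrac{1-q^k}{k}(yz)^k\big)$. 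Multiplying these together yields exactly the right‑hand side of \eqref{theta-p}, hence $\widetilde{\Theta}(y,z)$.

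It then remains to upgrade this to an equality $\widetilde{E}=E^{\circ}$. Since $\widetilde{E}$ and $E^{\circ}$ both satisfy $(1-qyz)^2F(-y,-z)/F(-qy,-qz)=\widetilde{\Theta}(y,z)$ (the former by \eqref{theta-e}, the latter just verified) and both are invertible power series, cancelling the common factor and substituting $(y,z)\mapsto(-y,-z)$ gives $\widetilde{E}(y,z)/\widetilde{E}(qy,qz)=E^{\circ}(y,z)/E^{\circ}(qy,qz)$. Setting $G(y,z):=\widetilde{E}(y,z)E^{\circ}(y,z)^{-1}$, this says $G(y,z)=G(qy,qz)$, so the coefficient $g_{a,b}$ of $y^az^b$ in $G$ obeys $(1-q^{a+b})g_{a,b}=0$; as $q$ is a prime power, $q^{a+b}\neq1$ for $a+b\ge1$, forcing $g_{a,b}=0$ there, while the constant term is $\widetilde{E}(0,0)E^{\circ}(0,0)^{-1}=1$, so $G=1$ and \eqref{eq:EP} holds. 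Finally, Proposition~\ref{e-h} reads $\widetilde{H}(y,z)=\big(\exp_q(\tfrac{yz}{1-q})\big)^2\widetilde{E}(-y,-z)^{-1}$; substituting $\widetilde{E}(-y,-z)=\exp\big(-\sum_{r\ge1}\tfrac{1}{r}\widetilde{P}_ry^r\big)\exp\big(-\sum_{t\ge1}\tfrac{1}{t}\widetilde{P}'_tz^t\big)\exp_q\big(\tfrac{yz}{1-q}\big)$ (obtained from \eqref{eq:EP} by $y\mapsto-y$, $z\mapsto-z$) and cancelling one copy of $\exp_q\big(\tfrac{yz}{1-q}\big)$ gives \eqref{eq:HP}. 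I expect the only genuine chore to be the bookkeeping in the verification that $E^{\circ}$ satisfies \eqref{theta-e}; the uniqueness step, which here plays the role of the "compare lowest‑degree terms" argument familiar from the one‑variable case, is short. Alternatively, one could prove \eqref{eq:HP} first from \eqref{theta-h} by the same two‑step method and then read off \eqref{eq:EP} from Proposition~\ref{e-h}.
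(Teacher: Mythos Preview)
Your proof is correct and takes a genuinely different route from the paper's. The paper iterates the functional equation \eqref{theta-e}: replacing $(y,z)$ by $(q^my,q^mz)$ and forming the telescoping product
\[
\widetilde{E}(-y,-z)=\prod_{m\ge0}\frac{\widetilde{E}(-q^my,-q^mz)}{\widetilde{E}(-q^{m+1}y,-q^{m+1}z)}=\frac{\prod_{m\ge0}\widetilde{\Theta}(q^my,q^mz)}{\prod_{m\ge0}(1-q^{2m+1}yz)^2},
\]
then substitutes \eqref{theta-p} into the numerator and simplifies. You instead check that the candidate $E^{\circ}$ satisfies \eqref{theta-e} (your computation is right and matches the manipulation the paper does at the end of the proof of Proposition~\ref{theta-e-h}), and then invoke a uniqueness step: any invertible power series solution $F$ of \eqref{theta-e} with $F(0,0)=1$ is determined, because $G=\widetilde{E}/E^{\circ}$ satisfies $G(y,z)=G(qy,qz)$ and $q>1$ kills all non-constant coefficients. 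Both arguments conclude \eqref{eq:HP} from \eqref{eq:EP} via Proposition~\ref{e-h} in the same way.

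Your uniqueness argument is short, self-contained, and sidesteps a delicate point in the paper's telescoping: since $q>1$ here (not a formal variable with $|q|<1$), the partial products $\widetilde{E}(-y,-z)/\widetilde{E}(-q^{N+1}y,-q^{N+1}z)$ do not converge coefficientwise as $N\to\infty$, so the infinite product needs more justification than is given. The paper's route, on the other hand, is more constructive in spirit and makes the connection with the infinite-product form of $\exp_q$ explicit. Either way, the key external input is the same, namely the expression \eqref{theta-p} for $\widetilde{\Theta}(y,z)$ from Proposition~\ref{T-P}.
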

	
	\begin{proof}
		Using Proposition \ref{e-h}, we only prove \eqref{eq:EP}.
		
		By Proposition \ref{theta-e-h}, we have
		\begin{align*}
			\frac{\widetilde{E}(-y,-z)}{\widetilde{E}(-qy,-qz)}=\frac{ \widetilde{\Theta}(y,z)}{(1-qyz)^2}.
		\end{align*}
		Replacing $y,z$ by $q^my,q^mz$ respectively, we obtain
		\begin{align}\label{e-prod}
			\widetilde{E}(-y,-z)=\prod_{m\ge 0}\frac{\widetilde{E}(-q^my,-q^mz)}{\widetilde{E}(-q^{m+1}y,-q^{m+1}z)}=\frac{\prod_{m\ge0}\widetilde{\Theta}(q^my,q^mz)}{\prod_{m\ge0}(1-q^{2m+1}yz)^2}.
		\end{align}
		We deduce from Proposition \ref{T-P} that
		\begin{align*}
			\widetilde{\Theta}(y,z)=\frac{\exp\big(\sum_{r\ge1}\frac{q^r}{r}\widetilde{P}_ry^r\big) }{\exp\big(\sum_{r\ge1}\frac{1}{r}\widetilde{P}_ry^r\big) } \cdot \frac{\exp\big(\sum_{t\ge1}\frac{q^t}{t}\widetilde{P}'_tz^t\big)}{\exp\big(\sum_{t\ge1}\frac{1}{t}\widetilde{P}'_tz^t\big)}\cdot \exp\big( \sum_{k\ge1}\frac{1-q^k}{k}(yz)^k\big),
		\end{align*}
		and it gives that
		\begin{align*}
			\prod_{m\ge0}\widetilde{\Theta}(q^my,q^mz)=&\exp\big(\sum_{r\ge1}-\frac{1}{r}{\widetilde{P}}_ry^r\big)\exp\big(\sum_{t\ge1}\-\frac{1}{t}\widetilde{P}'_tz^t\big)\exp\big( \sum_{m\ge0}\sum_{k\ge1}\frac{1-q^k}{k}(q^{2m}yz)^k\big)
			\\=&\exp\big(\sum_{r\ge1}-\frac{1}{r}{\widetilde{P}}_ry^r\big)\exp\big(\sum_{t\ge1}\-\frac{1}{t}\widetilde{P}'_tz^t\big)\exp\big( \sum_{m\ge0}\sum_{k\ge1}\frac{q^{2mk}-q^{(2m+1)k}}{k}(yz)^k\big)
			\\=&\exp\big(\sum_{r\ge1}-\frac{1}{r}{\widetilde{P}}_ry^r\big)\exp\big(\sum_{t\ge1}\-\frac{1}{t}\widetilde{P}'_tz^t\big)\exp\big( \sum_{m\ge0}\ln\frac{1-q^{2m+1}yz}{1-q^{2m}yz}
			\big)
			\\=&\exp\big(\sum_{r\ge1}-\frac{1}{r}{\widetilde{P}}_ry^r\big)\exp\big(\sum_{t\ge1}\-\frac{1}{t}\widetilde{P}'_tz^t\big)\cdot\big( \prod_{m\ge0}\frac{1-q^{2m+1}yz}{1-q^{2m}yz}
			\big).
		\end{align*}
		
		Plugging it into \eqref{e-prod}, we have
		\begin{align*}
			\widetilde{E}(-y,-z)=&\exp\big(\sum_{r\ge1}-\frac{1}{r}{\widetilde{P}}_ry^r\big)\exp\big(\sum_{t\ge1}\-\frac{1}{t}\widetilde{P}'_tz^t\big)\frac{ \prod_{m\ge0}\frac{1-q^{2m+1}yz}{1-q^{2m}yz}
			}{\prod_{m\ge0}(1-q^{2m+1}yz)^2}
			\\=&\exp\big(\sum_{r\ge1}-\frac{1}{r}{\widetilde{P}}_ry^r\big)\exp\big(\sum_{t\ge1}\-\frac{1}{t}\widetilde{P}'_tz^t\big)\prod_{m\ge0}\frac{ 1}{1-q^{m}yz}
			\\=&\exp\big(\sum_{r\ge1}-\frac{1}{r}{\widetilde{P}}_ry^r\big)\exp\big(\sum_{t\ge1}\-\frac{1}{t}\widetilde{P}'_tz^t\big)\exp_q\big(\frac{yz}{1-q}\big).
		\end{align*}
		The proof is completed.
	\end{proof}
	
	\begin{corollary}
		\label{lem:PE}
		We have
		\begin{align*}
			\widetilde{P}(-y) \widetilde{E}(y,z)
			&=  \frac{\partial}{\partial y}\widetilde{E}(y,z)  - \widetilde{E}(y,z) \sum_{k\ge 1} \frac {y^{k-1} z^{k}}{ 1-q^k},
		\end{align*}
		\begin{align*}
			\widetilde{P}(y) \widetilde{H}(y,z)
			&=  \frac{\partial}{\partial y}\widetilde{H}(y,z)  - \widetilde{H}(y,z) \sum_{k\ge 1} \frac {y^{k-1} z^{k}}{ 1-q^k}.
		\end{align*}
		%\red{Can we give formulas for $\widetilde{P}'(-z) \widetilde{E}(y,z)$, $\widetilde{P}'(z)\widetilde{H}(y,z)$?}
		
		\begin{align*}
			\widetilde{P}'(-z) \widetilde{E}(y,z)
			&=  \frac{\partial}{\partial z}\widetilde{E}(y,z)  - \widetilde{E}(y,z) \sum_{k\ge 1} \frac {y^{k} z^{k-1}}{ 1-q^k},
		\end{align*}
		\begin{align*}
			\widetilde{P}'(z) \widetilde{H}(y,z)
			&=  \frac{\partial}{\partial z}\widetilde{H}(y,z)  - \widetilde{H}(y,z) \sum_{k\ge 1} \frac {y^{k} z^{k-1}}{ 1-q^k}.
		\end{align*}
		
	\end{corollary}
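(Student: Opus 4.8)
The plan is to deduce all four identities directly from the factorizations of $\widetilde{E}(y,z)$ and $\widetilde{H}(y,z)$ established in Proposition \ref{prop:EHP}, by taking logarithmic derivatives in $y$ and in $z$. The computation takes place in $\cd\widetilde{\ch}(\bfk\QJ)\otimes_\Q\Q(\sqq)[[y,z]]$, and since $\cd\widetilde{\ch}(\bfk\QJ)$ is commutative (Lemma \ref{lem:commDHA}) all the formal power series involved commute, so the product rule and the chain rule for $\partial_y\exp(\cdot)$ and $\partial_z\exp(\cdot)$ apply without any ordering subtlety; moreover each exponent occurring in \eqref{eq:EP}--\eqref{eq:HP} has zero constant term, so these exponentials and their derivatives are well defined.

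First I would rewrite the Euler factor using \eqref{exp=exp}, namely $\exp_q\big(\tfrac{yz}{1-q}\big)=\exp\big(\sum_{k\ge1}\tfrac{(yz)^k}{k(1-q^k)}\big)$, so that its $y$-derivative is $\big(\sum_{k\ge1}\tfrac{y^{k-1}z^k}{1-q^k}\big)$ times itself. Then, applying $\partial_y$ to \eqref{eq:EP} and using that the middle factor of \eqref{eq:EP} does not involve $y$, I obtain
\[
\partial_y\widetilde{E}(y,z)=\Big(\sum_{r\ge1}(-1)^{r-1}\widetilde{P}_ry^{r-1}\Big)\widetilde{E}(y,z)+\Big(\sum_{k\ge1}\frac{y^{k-1}z^k}{1-q^k}\Big)\widetilde{E}(y,z).
\]
Since $\widetilde{P}(-y)=\sum_{r\ge1}(-1)^{r-1}\widetilde{P}_ry^{r-1}$ by \eqref{def:p}, rearranging gives the first identity. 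Applying $\partial_z$ to \eqref{eq:EP} instead, the roles of $y,\widetilde{P}$ get replaced by $z,\widetilde{P}'$ and the monomial $y^{k-1}z^k$ by $y^kz^{k-1}$, which yields the third identity.

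The two identities for $\widetilde{H}(y,z)$ follow in exactly the same way from \eqref{eq:HP}: the only difference with \eqref{eq:EP} is that the coefficients $(-1)^{r-1}/r$ and $(-1)^{t-1}/t$ are replaced by $1/r$ and $1/t$, so that $\widetilde{P}(-y)$ and $\widetilde{P}'(-z)$ become $\widetilde{P}(y)=\sum_{r\ge1}\widetilde{P}_ry^{r-1}$ and $\widetilde{P}'(z)$, while the Euler factor is unchanged; differentiating in $y$ and in $z$ then produces the second and fourth identities. There is no real obstacle in this argument: it is a routine manipulation of exponential generating functions over the commutative ring $\cd\widetilde{\ch}(\bfk\QJ)\otimes_\Q\Q(\sqq)$, and the only ingredient needed beyond Proposition \ref{prop:EHP} is Euler's identity \eqref{exp=exp}, used to put the $q$-exponential factor in a form whose logarithmic derivative is transparent.
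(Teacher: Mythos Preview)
Your proposal is correct and follows essentially the same approach as the paper: the paper's proof simply says to replace $\exp_q$ in Proposition~\ref{prop:EHP} via the identity \eqref{exp=exp} and then apply $\partial/\partial y$ (and similarly for the others), which is exactly what you do with more detail spelled out. Your explicit remarks about commutativity and well-definedness of the formal exponentials are sound and make the argument more self-contained than the paper's one-line sketch.
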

	
	\begin{proof}
		The first formula follows by first replacing $\exp_q$ in the first formula in Proposition~\ref{prop:EHP} via the identity \eqref{exp=exp}, and then applying the differentiation $\frac{\partial}{\partial y}$. The proofs of other formulas are entirely similar.
	\end{proof}

	Now, we can complete the proof of Theorem \ref{prop:transition}.
	
	\begin{proof}[Proof of Theorem \ref{prop:transition}]
		
		We only prove \eqref{eq:Theta-Theta12} since the other two are similar.
		
		By transforming \cite[Chapter 3, (2.10)]{Mac95} to the classic Hall algebra $\widetilde{\ch}(\bfk \QJ)$,   
		we know $$\Theta_1(y)=\exp\big(\sum_{r\ge1}\frac{q^r-1}{r}\widetilde{P}_ry^r\big),\quad \Theta_2(z)=\exp\big(\sum_{t\ge1}\frac{q^t-1}{t}\widetilde{P}'_tz^t\big). $$
		Then \eqref{eq:Theta-Theta12} follows from \eqref{theta-p}.
	\end{proof}
	
	\iffalse
	Due to Proposition \ref{prop:EHP}, we also formulate relations between the generating functions of elementary symmetric functions and complete symmetric functions in the derived Hall algebra of the Jordan quiver and these in the classical Hall algebra.
	
	\begin{corollary}
		We have
		\begin{align}
			\widetilde{E}(y,z)=E_1(y)E_2(z)\exp_q\big( \frac{yz}{1-q}\big),
		\end{align}
		where\[
		{E}_1(y)=\sum_{r\ge0}\sqq^{r(r-1)}\frac{[S^{(1^r)}]}{|\Aut(S^{(1^r)})|}y^r ,\quad {E}_2(z)=\sum_{t\ge0}\sqq^{t(t-1)}\frac{[S^{(1^t)}[1]]}{|\Aut(S^{(1^t)})|} z^t;
		\]
		\begin{align}
			\widetilde{H}(y,z)=H_1(y)H_2(z)\exp_q\big( \frac{yz}{1-q}\big),
		\end{align}
		where\[
		{H}_1(y)=\sum_{r\ge0}\sum_{\la\vdash r}\frac{[S^{(\la)}]}{|\Aut(S^{(\la)})|}y^r,\quad {H}_2(z)=\sum_{t\ge0}\sum_{\mu \vdash t}\frac{[S^{(\mu)}[1]]}{|\Aut(S^{(\mu)})|} z^t.
		\]
	\end{corollary}

	\fi
	%

	%%%%%%%
	%%%%%%%

	%%%%%%%
	%%%%%%%

\end{document}